\newtheorem{theorem}{Theorem}[section]
\newtheorem{lemma}[theorem]{Lemma}
\theoremstyle{definition}
\theoremstyle{remark}
\newtheorem{remark}[theorem]{Remark}
\numberwithin{equation}{section}
\renewcommand{\emptyset}{\mbox{\textup{\O}}}
\newcommand{\re}{\mathbb R}
\DeclareMathOperator{\supp}{supp}
\newcommand{\subRn}{{{\mathbb R}^n}}
\newcommand{\la}{\langle}
\newcommand{\ra}{\rangle}
\newcommand{\M}{\overline{M}}
\newcommand{\Q}{\mathcal{Q}}
\def\Xint#1{\mathchoice
  {\XXint\displaystyle\textstyle{#1}}%
  {\XXint\textstyle\scriptstyle{#1}}%
  {\XXint\scriptstyle\scriptscriptstyle{#1}}%
  {\XXint\scriptscriptstyle\scriptscriptstyle{#1}}%
  \!\int}
\def\XXint#1#2#3{{\setbox0=\hbox{$#1{#2#3}{\int}$}
    \vcenter{\hbox{$#2#3$}}\kern-.5\wd0}}
\def\avgint{\Xint-}
\begin{document}

\title[Sharp weighted estimates]{Sharp weighted estimates for classical operators}

\author{David Cruz-Uribe, SFO}
\address{David Cruz-Uribe, SFO\\
Dept. of Mathematics \\ Trinity College \\
Hartford, CT 06106-3100, USA} \email{david.cruzuribe@trincoll.edu}

\author{Jos\'e Mar{\'\i}a Martell}

\address{Jos\'e Mar{\'\i}a Martell
\\
Instituto de Ciencias Matem\'aticas CSIC-UAM-UC3M-UCM
\\
Consejo Superior de Investigaciones Cient{\'\i}ficas
\\
C/ Nicol\'as Cabrera, 13-15
\\
E-28049 Madrid, Spain} \email{chema.martell@icmat.es}

\author{Carlos P\'erez}
\address{Carlos P\'erez
\\
Departamento de An\'alisis Matem\'atico, Facultad de Matem\'aticas\\
Universidad de Sevilla, 41080 Sevilla, Spain}
\email{carlosperez@us.es}

\subjclass{42B20, 42B25} \keywords{$A_p$ weights, Haar shift
operators, singular  integral operators, Hilbert transform, Riesz
transforms, Beurling-Ahlfors operator, dyadic square function,
vector-valued maximal operator}

\thanks{The first author was supported by a grant from the Faculty
  Research Committee and the Stewart-Dorwart Faculty Development Fund
  at Trinity College; the first and third authors are supported by
  grant MTM2009-08934 from the Spanish Ministry of Science and
  Innovation; the second author is supported by grant MTM2007-60952
  from the Spanish Ministry of Science and Innovation and by CSIC PIE
  200850I015. }

\begin{abstract}
We give a general method based on dyadic    Calder\'on-Zygmund theory to prove sharp one and two-weight norm inequalities for some of the classical operators of harmonic analysis: the Hilbert and Riesz transforms, the Beurling-Ahlfors operator, the maximal singular integrals associated to these operators, the dyadic square function and the vector-valued maximal operator.

In the one-weight case we prove the sharp dependence on the $A_p$
constant by finding the best value for the exponent $\alpha(p)$ such
that
\[ \|Tf\|_{L^p(w)} \leq
C_{n,T}\,[w]_{A_p}^{\alpha(p)}\|f\|_{L^p(w)}.
\]
For the Hilbert transform, the Riesz transforms and the
Beurling-Ahlfors operator the sharp value of $\alpha(p)$ was found by
Petermichl and Volberg~\cite{MR2354322,MR2367098,MR1894362}; their
proofs used approximations by the dyadic Haar shift operators, Bellman
function techniques, and two-weight norm inequalities.
Our proofs again depend on dyadic approximation, but avoid Bellman functions and two-weight norm inequalities.  We
instead use a recent result due to A.~Lerner~\cite{lernerP2009} to estimate the oscillation of dyadic operators. By applying this we
get a straightforward proof of the sharp dependence on the $A_p$ constant for any operator that can be approximated by Haar shift
operators. In particular, we provide a unified approach for the Hilbert and Riesz transforms, the Beurling-Ahlfors operator (and
their corresponding maximal singular integrals),  dyadic paraproducts and Haar multipliers. Furthermore, we completely solve the open problem
of sharp dependence for the dyadic square functions and vector-valued Hardy-Littlewood maximal function.

In the two-weight case we use the very same techniques to prove sharp
results in the scale of $A_p$ bump conditions.  For the singular
integrals considered above, we show they map $L^p(v)$ into $L^p(u)$,
$1<p<\infty$, if the pair $(u,v)$ satisfies
\[ \sup_Q \|u^{1/p}\|_{A,Q}\|v^{-1/p}\|_{B,Q} < \infty, \]
where $\bar{A}\in B_{p'}$ and $\bar{B}\in B_p$ are Orlicz functions.  This condition is sharp.  Furthermore, this
condition characterizes (in the scale of these $A_p$ bump conditions) the corresponding two-weight norm inequality for the
Hardy-Littlewood maximal operator $M$ and its dual: i.e., $M:L^p(v)\longrightarrow L^p(u)$ and $M:L^{p'}(u^{1-p'})\longrightarrow L^p(v^{1-p'})$. Muckenhoupt and Wheeden conjectured that these two inequalities for $M$ are sufficient for the Hilbert transform to be bounded from $L^p(v)$
into $L^p(u)$. Thus, in the scale of $A_p$ bump conditions, we prove their conjecture. We prove similar, sharp two-weight results for
the dyadic square function and the vector-valued maximal operator.

\end{abstract}

\date{\today}

\maketitle

\section{Introduction}

The problem of proving one and two-weight norm inequalities for the
classical operators of harmonic analysis---singular integrals,
square functions, maximal operators---has a long and complex
history.  In the one weight case, the (nearly) universal sufficient
and (often) necessary condition for an operator to be bounded on
$L^p(w)$  is the $A_p$ condition:  given $1<p<\infty$, a weight $w$
(i.e., a non-negative, locally integrable function) is in $A_p$ if
\[ [w]_{A_p} = \sup_Q \left(\avgint_Q w(x)\,dx\right)\left(\avgint_Q w(x)^{1-p'}\,dx\right)^{p-1} < \infty, \]
where the supremum is taken over all cubes in $\re^n$ and $\avgint_Q
w(x)\,dx = |Q|^{-1}\int_Q w(x)\,dx$.  For more on one-weight
inequalities we refer the reader to
\cite{MR1800316,MR807149,MR2463316}.

An important question is to determine the best constant in terms of
the $A_p$ constant $[w]_{A_p}$.  More precisely, given an operator
$T$, find the smallest power $\alpha(p)$ such that
\[ \|Tf\|_{L^p(w)} \leq C_{n,T}[w]_{A_p}^{\alpha(p)}\|f\|_{L^p(w)}. \]
This problem was first investigated by Buckley~\cite{MR1124164}.
More recently, it has attracted renewed attention because of the
work of Astala, Iwaniec and Saksman~\cite{MR1815249}.  They proved
that sharp regularity results for solutions to the Beltrami equation
hold provided that  the Beurling-Ahlfors operator satisfies
$\alpha(p)=1$  for $p>2$.

The problem of characterizing the weights that govern the two-weight
norm inequalities for classical operators is still open and there
are several approaches to finding sufficient conditions on weights for
an operator to be bounded from $L^p(v)$ to $L^p(u)$.  One approach
is to replace the two-weight $A_p$ condition with the $A_p$ ``bump''
condition:
\[ \sup_Q \|u^{1/p}\|_{A,Q} \|v^{-1/p}\|_{B,Q} < \infty, \]
where $A$ and $B$ are Young functions and the norms are localized
Orlicz norms slightly larger than the $L^p$ and $L^{p'}$ norms.
(Precise definitions will be given below.)   Sufficient growth
conditions on $A$ and $B$ are known for many operators and this has
led to a number of conjectures on sharp sufficient conditions.   For
the history of this approach we refer the reader
to~\cite{MR2351373,cruz-martell-perezBook,MR1713140,MR1793688,MR1991004}.

\smallskip

In this paper we develop a unified approach to both of these
problems and the results we get are sharp.    We consider one and
two-weight norm inequalities for singular integrals, maximal singular integrals, the dyadic
paraproduct, the dyadic square function and the vector-valued
maximal operator.  The results in the one-weight case for singular
integrals are not new, but we believe that our proofs are simpler
than existing proofs.   The remaining theorems, however, are all
new.

We believe that our approach shows that there is a
deep connection between sharp results in the one and two-weight
case.  Further, key to our approach is that the operators are either dyadic or can be approximated by dyadic operators (e.g., by the Haar shift operators defined below).  Thus our results will extend to any operator that can be approximated in this way.

\subsection*{Singular integrals}
It is conjectured that if $T$ is any Calder\'on-Zyg\-mund singular
integral operator, then for any $p$, $1<p<\infty$, and for any $w\in
A_p$,
\begin{equation} \label{eqn:sio-onewt}
\|Tf\|_{L^p(w)} \leq C_{T,n,p}\,
[w]_{A_p}^{\max\left(1,\frac{1}{p-1}\right)}\|f\|_{L^p(w)}.
\end{equation}
This inequality is true if $T$ is  the Hilbert transform, a Riesz
transform or the Beurling-Ahlfors operator.

\begin{theorem} \label{thm:sio-onewt}
Given $p$, $1<p<\infty$, if $T$ is the  Hilbert transform, a Riesz
transform or  the Beurling-Ahlfors operator, then for all $w\in A_p$
inequality \eqref{eqn:sio-onewt} holds.
\end{theorem}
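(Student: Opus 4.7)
The plan is to reduce everything to the case $p=2$ via sharp extrapolation, and then prove the $L^2(w)$ estimate for the dyadic approximating operators. The extrapolation theorem of Dragi\v{c}evi\'c, Grafakos, Pereyra and Petermichl gives exactly the exponent $\max(1,1/(p-1))$ from a linear bound in $[w]_{A_2}$ at $p=2$, so it suffices to establish
\[
\|Tf\|_{L^2(w)} \le C_{T,n}\,[w]_{A_2}\,\|f\|_{L^2(w)}
\]
for $T$ the Hilbert transform, a Riesz transform, or the Beurling--Ahlfors operator.

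First I would invoke the now classical representation formulas of Petermichl (for $H$ and $R_j$) and Petermichl--Volberg (for the Beurling--Ahlfors operator) to express $T$ as an average of translations and dilations of a Haar shift operator $S$ acting on the associated dyadic system. Since these averages preserve operator norms on $L^2(w)$, the problem reduces to proving the sharp bound $\|Sf\|_{L^2(w)}\le C\,[w]_{A_2}\|f\|_{L^2(w)}$ uniformly in the dyadic grid. This reduction is the only place where special features of $H$, $R_j$, and the Beurling--Ahlfors operator enter; the remainder of the argument is entirely structural and depends only on the fact that $S$ is a Haar shift.

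Next, I would apply Lerner's decomposition formula to $Sf$: on each cube of a suitable Calder\'on--Zygmund stopping family, the local oscillation of $Sf$ is pointwise dominated by a sparse sum of averages of $|f|$ (or more precisely, of the local sharp maximal function of $Sf$ evaluated on a sparse family of dyadic cubes). Using the standard off-diagonal estimate for the Haar shift $S$ - namely that its local mean oscillation on a cube $Q$ is controlled by the mean of $|f|$ over a bounded dilate of $Q$ - one arrives at the pointwise bound
\[
|Sf(x)| \le C\sum_{Q\in\mathcal{S}} \Big(\avgint_{Q} |f|\,dy\Big)\chi_{Q}(x),
\]
where $\mathcal{S}$ is a sparse family. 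The $L^2(w)$ norm of such a sparse operator is then estimated by duality against $L^2(w^{-1})$: pairing with $g\in L^2(w^{-1})$ and reorganizing the sum over the sparse family produces two copies of a dyadic maximal function, one applied to $fw^{1/2}$ in $L^2$ and one to $gw^{-1/2}$, weighted by the $A_2$ factor $\big(\avgint_Q w\big)\big(\avgint_Q w^{-1}\big)\le [w]_{A_2}$. This yields the desired linear bound in $[w]_{A_2}$.

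The main obstacle is the sparse/Lerner step: one must show that the bound produced by Lerner's formula is truly linear in $[w]_{A_2}$, rather than losing a logarithmic or polynomial factor in the iteration of stopping times. The key point is that the sparseness of $\mathcal{S}$ (each cube contains a fixed proportion of measure not covered by its children in the family) allows one to identify the sum with a testing-type object, so that Carleson embedding or the dyadic $A_2$ estimate for the maximal function closes the argument with the sharp constant. Once $p=2$ is done, sharp extrapolation upgrades the bound to all $1<p<\infty$ with exponent $\max(1,1/(p-1))$, completing the proof.
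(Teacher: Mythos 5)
Your proposal follows the same route as the paper: sharp extrapolation from $p=2$, Petermichl/Petermichl--Volberg dyadic approximation by Haar shifts, Lerner's local mean oscillation decomposition combined with the key lemma that $\omega_\lambda(H_\tau f,Q)\lesssim \avgint_{Q^\tau}|f|$, and then a duality argument pairing the resulting sparse sum against $g\in L^2(w^{-1})$ with the $A_2$ factor extracted cube-by-cube and the remainder controlled by weighted dyadic maximal functions (Lemma~\ref{lemma:wtdmax}). The only minor gloss is that you present a clean pointwise sparse bound on $|Sf|$, whereas the paper's decomposition produces $|Sf - m_{Sf}(Q_N)| \lesssim Mf + (\text{sparse sum})$ on a large cube $Q_N$ and one must let $|Q_N|\to\infty$ using $m_{Sf}(Q_N)\to 0$; this is a technicality and your outline is otherwise the argument given.
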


This result was first proved by
Petermichl~\cite{MR2354322,MR2367098} and Petermichl and
Volberg~\cite{MR1894362}.   For each operator the proof requires
several steps.  First, it is enough to prove  the case $p=2$; the
other values of $p$ follow from a version of the Rubio de Francia
extrapolation theorem with sharp constants due to
Dragi{\v{c}}evi{\'c} {\em et al.}~\cite{MR2140200}
(Theorem~\ref{thm:sharp-extrapol} below).    Second, for each of the
above operators the problem is reduced to proving the weighted $L^2$
inequality for a corresponding dyadic operator by proving that the
given operator can be approximated by integral averages of the
dyadic operators (and their analogs defined on translations and
dilations of the standard dyadic grid).  Finally, the desired
inequality was proved for each of these dyadic operators using
Bellman function techniques and two-weight norm inequalities.

Recently, Lacey, Petermichl and
Reguera-Rodriguez~\cite{lacey-petermichl-reguera2010}
gave a proof of the sharp $A_2$ constant for a large family of Haar
shift operators that includes all of the dyadic operators needed for
the above results.  Their proof avoids the use of Bellman functions,
and instead uses a deep, two-weight ``$Tb$ theorem'' for Haar shift
operators due to Nazarov, Treil and Volberg~\cite{MR2407233}.

We give a different and simpler proof that uses approximation
by dyadic Haar shifts but avoids both Bellman functions and two-weight
norm inequalities such as the $Tb$ theorem. Instead, we use a very
interesting decomposition argument based on local mean oscillation
recently developed by Lerner [30] to prove the corresponding result
for dyadic Haar shifts. Intuitively, this decomposition
may be thought of as a version of the Calder\'on-Zygmund
decomposition of a function, replacing the mean by the median. (We
will make this more precise below.)  Theorem 1.1 was
announced in \cite{MR2628851}.

\begin{remark}
After this paper was completed we learned of several
other related results.  First, Vagharsyakhan~\cite{vargharsyakhan}
has shown that in one dimension, all convolution-type
Calder\'on-Zygmund singular integral operators with sufficiently
smooth kernel can be approximated by Haar shifts.  Second, Lacey
{\em et al.}~\cite{lacey+5P} used a deep characterization of the
one-weight problem in~\cite{perez-treil-volbergP} to prove Theorem~\ref{thm:sio-onewt} for
all singular integrals with sufficiently smooth kernels.  Third,
Lerner~\cite{lernerP2010} proved Theorem~\ref{thm:sio-onewt} for
any convolution-type Calder\'on-Zygmund singular integrals
provided $p \geq 3$ or $1<p\leq 3/2$.  Finally,
Hyt\"onen~\cite{hytonenP2010} proved Theorem~\ref{thm:sio-onewt}
for all singular integrals and all $p>1$, thus solving the
so-called $A_2$ conjecture.  His proof is extremely technical: it
is based on the approach in~\cite{perez-treil-volbergP} and a
refinement of the arguments
in~\cite{lacey-petermichl-reguera2010}. A simpler proof of the
$A_2$ conjecture based upon the previous three papers appears
in~\cite{hytonen-perez-treil-volbergP}.
\end{remark}

An important advantage of our approach is that it also yields sharp
two-weight norm inequalities.   To state our result we need a few
definitions.  A Young function is a function $A : [0,\infty)
\rightarrow [0,\infty)$ that is continuous, convex and strictly
increasing, $A(0)=0$ and $A(t)/t\rightarrow \infty$ as $t\rightarrow
\infty$. Given a cube $Q$ we define the localized Luxemburg norm by
\[  \|f\|_{A,Q} = \inf \left\{ \lambda > 0 : \avgint_Q A\left(\frac{|f(x)|}{\lambda}\right)dx \leq 1 \right\}.  \]
When $A(t)=t^p$, $1<p<\infty$, we write
$$
\|f\|_{p,Q}=\left(\avgint_Q |f(x)|^p dx\right)^{1/p}.
$$
The associate function of $A$ is the Young function
\[ \bar{A}(t) = \sup_{s>0}\{ st - A(s)\}. \]
A Young function $A$ satisfies the $B_p$ condition if for some $c>0$,
\[ \int_c^\infty \frac{A(t)}{t^p} \frac{dt}{t} < \infty. \]
Important examples of such functions are of the form
$A(t)=t^{p}\log(e+t)^{-1-\epsilon}$, $\epsilon>0$, which have
associate functions $\bar{A}(t)\approx
t^{p'}\log(e+t)^{p'-1+\delta}$, $\delta>0$.

\begin{theorem} \label{thm:sio-twowt}
Given $p$, $1<p<\infty$, let $A$ and $B$ be Young functions such
that $\bar{A}\in B_{p'}$ and $\bar{B}\in B_p$.  Then for any pair of
weights $(u,v)$ such that
\begin{equation} \label{eqn:sio-twowt1}
\sup_Q \|u^{1/p}\|_{A,Q}\|v^{-1/p}\|_{B,Q} < \infty,
\end{equation}
we have that
\begin{equation} \label{eqn:sio-twowt2}
\| Tf \|_{L^p(u)} \leq C\|f\|_{L^p(v)},
\end{equation}
where $T$ is the Hilbert transform, a Riesz transform, or the
Beurling-Ahlfors operator.
\end{theorem}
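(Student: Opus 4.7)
The plan is to mirror the strategy used for the one-weight Theorem~\ref{thm:sio-onewt}: first approximate each operator $T$ (the Hilbert, Riesz, or Beurling-Ahlfors operator) by averages of Haar shift operators on translates and dilates of the standard dyadic grid, then use Lerner's local oscillation decomposition to dominate a single Haar shift $S$ pointwise by a positive sparse dyadic operator $\mathcal{A}_{\mathcal{S}}f(x):=\sum_{Q\in\mathcal{S}}\avgint_Q|f|\,\chi_Q(x)$, and finally prove the two-weight inequality~\eqref{eqn:sio-twowt2} directly for $\mathcal{A}_{\mathcal{S}}$ using the bump condition~\eqref{eqn:sio-twowt1}. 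Since the averaging step is identical to the one used for Theorem~\ref{thm:sio-onewt}, it suffices to establish a uniform two-weight bound for arbitrary Haar shifts on a fixed dyadic grid.

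Applying Lerner's formula to $Sf$ yields a sparse family $\mathcal{S}$ of dyadic cubes (each $Q\in\mathcal{S}$ carrying a major subset $E_Q$, with the $E_Q$ pairwise disjoint) such that
\[ |Sf(x)| \leq C\sum_{Q\in\mathcal{S}}\avgint_{Q^*}|f|\,\chi_Q(x), \]
where $Q^*$ is a bounded dilate of $Q$ whose dilation factor depends only on the complexity of $S$; the passage from the local oscillation $\omega_\lambda(Sf;Q)$ to the local average uses the unweighted weak-type bound for $S$ applied to $f\chi_{Q^*}$. Hence it is enough to show that $\mathcal{A}_{\mathcal{S}}\colon L^p(v)\to L^p(u)$ whenever the pair $(u,v)$ satisfies~\eqref{eqn:sio-twowt1}.

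By duality (we may take $f,g\geq 0$), this reduces to the bilinear estimate
\[ \sum_{Q\in\mathcal{S}}|Q|\,\avgint_Q f\,\avgint_Q gu \;\leq\; C\,\|f\|_{L^p(v)}\,\|g\|_{L^{p'}(u)}. \]
The generalized H\"older inequality in Orlicz spaces applied with the conjugate pairs $(B,\bar B)$ and $(A,\bar A)$ gives
\[ \avgint_Q f \leq 2\,\|fv^{1/p}\|_{\bar B,Q}\,\|v^{-1/p}\|_{B,Q},\qquad \avgint_Q gu \leq 2\,\|gu^{1/p'}\|_{\bar A,Q}\,\|u^{1/p}\|_{A,Q}. \]
The hypothesis~\eqref{eqn:sio-twowt1} pulls $\|u^{1/p}\|_{A,Q}\|v^{-1/p}\|_{B,Q}$ out as a uniform constant. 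Replacing $|Q|$ by $|E_Q|$ (admissible by sparseness) and using $\|fv^{1/p}\|_{\bar B,Q}\leq M_{\bar B}(fv^{1/p})(x)$ and $\|gu^{1/p'}\|_{\bar A,Q}\leq M_{\bar A}(gu^{1/p'})(x)$ for $x\in E_Q$, the whole sum is majorized by $\int_{\re^n} M_{\bar B}(fv^{1/p})\,M_{\bar A}(gu^{1/p'})\,dx$. An application of H\"older with exponents $p$ and $p'$, together with the standard equivalence that $\bar B\in B_p$ implies $M_{\bar B}\colon L^p\to L^p$ and $\bar A\in B_{p'}$ implies $M_{\bar A}\colon L^{p'}\to L^{p'}$, completes the proof.

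The main obstacle is the sparse pointwise bound in the Lerner step: one must control $\omega_\lambda(Sf;Q)$ for a Haar shift of arbitrary (but fixed) complexity by a local average of $f$ on a controlled enlargement of $Q$, with constants uniform in the shift's parameters. Once that sparse inequality is in hand, the remaining ingredients---Orlicz H\"older, the sparseness/major-subset trick, and the $B_p$/$B_{p'}$ boundedness of the Orlicz maximal operators---are standard, and the dyadic approximation of $H$, $R_j$, and the Beurling-Ahlfors operator is inherited directly from the proof of Theorem~\ref{thm:sio-onewt}.
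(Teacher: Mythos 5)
Your proposal is correct and follows essentially the same route as the paper: Lerner's local oscillation decomposition applied to a Haar shift, reduction to a discrete/sparse sum with a $\tau$-fold enlargement $Q^*=(\widehat{Q})^\tau$, and then Orlicz--H\"older plus the major-subset trick plus $B_p$/$B_{p'}$ boundedness of $M_{\bar B}$, $M_{\bar A}$. The only cosmetic differences are that you dualize against $L^{p'}(u)$ while the paper dualizes the function $(H_\tau f)u^{1/p}$ against unweighted $L^{p'}$, and that the paper keeps the $M^{\#,d}_\lambda$ term of Lerner's formula separate (bounding it by $Mf$ and invoking Theorem~\ref{thm:twowt-max}) rather than folding it into a single sparse form; you should also note, as the paper does, that passing from $\|u^{1/p}\|_{A,Q}$ to $\|u^{1/p}\|_{A,Q^*}$ costs only a factor $2^{n(\tau+1)}$ by convexity.
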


Condition \eqref{eqn:sio-twowt1} is referred to as an
$A_p$ bump condition: when $A(t)=t^p$ and $B(t)=t^{p'}$, we get the
two-weight $A_p$ condition. Theorem~\ref{thm:sio-twowt} was proved
in~\cite{MR2351373} for the Hilbert transform in the special case
that $A(t)=t^p\log(e+t)^{p-1+\delta}$, $\delta>0$ (here $\bar{A}\in B_{p'}$), and for Riesz
transforms (indeed, for any Calder\'on-Zygmund singular integral)
given the additional hypothesis that $p>n$.  Examples
(see~\cite{cruz-martell-perezBook,MR1713140}) show that in this
particular case
these results are sharp, since they are false in general if we take
$\delta=0$ (when $\bar{A}\not\in B_{p'}$).  Theorem~\ref{thm:sio-twowt} was proved for the Hilbert
transform and general singular integrals when $p>n$ by
Lerner~\cite{lernerP2009} by combining his decomposition argument
with the arguments in~\cite{MR2351373}.

Two-weight inequalities were
first considered by Muckenhoupt~\cite{muckenhoupt72}, who noted that the same
proof as in the one-weight case immediately shows that for all
$p$, $1\leq p <\infty$, $(u,v)\in A_p$ if and only if the maximal
operator satisfies the weak $(p,p)$ inequality.  However, Muckenhoupt and
Wheeden~\cite{muckenhoupt-wheeden76}  soon showed that
while the two-weight $A_p$ condition is necessary for the strong
$(p,p)$ inequality for the maximal operator and the strong and
weak type inequalities for the Hilbert transform, it is not
sufficient. This led Muckenhoupt and
Wheeden to focus not on the structural or geometric properties of
$A_p$ weights but on their relationship to the maximal operator,
in particular, the fact that $w\in A_p$ was necessary and
sufficient for the maximal operator to be bounded on $L^p(w)$ and
$L^{p'}(w^{1-p'})$.  They made the following
conjecture that is still open: a
sufficient condition for the Hilbert transform to satisfy the
strong $(p,p)$ inequality $H : L^p(v) \rightarrow L^p(u)$,
$1<p<\infty$, is that the maximal operator satisfies the pair of
inequalities
\begin{equation}
M : L^p(v) \rightarrow L^p(u),
\qquad
M : L^{p'}(u^{1-p'}) \rightarrow L^{p'}(v^{1-p'}). \label{eqn:MW}
\end{equation}

Bump $A_p$ conditions were first considered by
Neugebauer~\cite{neugebauer83} who showed the following striking
result: a pair of weights $(u,v)$ satisfies
\eqref{eqn:sio-twowt1} with power bumps $A(t)=t^{r\,p}$,
$A(t)=t^{r\,p'}$ for some $r>1$ if and only if there exist $w\in
A_p$ and positive constants $c_1,\,c_2$ such that $c_1u(x) \leq w(x)
\leq c_2v(x)$. From this condition we immediately get a large number
of two-weight norm inequalities as corollaries to the analogous
one-weight results.  In particular, we get the two inequalities
\eqref{eqn:MW}. An immediate question was whether this condition
could be weakened and still get that the maximal operator satisfies
$M : L^p(v)\rightarrow L^p(u)$.  This was answered
in~\cite{perez95}, where it was shown that a sufficient condition
for \eqref{eqn:MW} was that the pair of weights satisfies
\eqref{eqn:sio-twowt1} with $\bar{A}\in B_{p'}$ and $\bar{B}\in
B_p$. The centrality of these $B_p$ conditions is shown by the fact
that they are sharp within the scale of Orlicz bumps as shown in
\cite{perez95}.  This led naturally to the following version of the
conjecture of Muckenhoupt and Wheeden: a sufficient condition on the
pair of weights $(u,v)$ for any singular integral to satisfy $T :
L^p(v)\rightarrow L^p(u)$ is that \eqref{eqn:sio-twowt1}
holds.  Progress on this conjecture was made in
\cite{MR1991004, MR2351373, lernerP2009}. Theorem
\ref{thm:sio-twowt} completely solves it for the
Hilbert and Riesz transforms and the Beurling-Ahlfors operator, and
as we noted above it is the best possible result  in
the scale of $B_p$ bumps. See \cite{cruz-martell-perezBook}
for further details and
references on this topic.

In the past decade, a great deal of attention has been focused on
proving that ``testing conditions'' are necessary and sufficient for
two-weight norm inequalities for singular integrals.  (See Nazarov,
Treil and Volberg~\cite{nazarov-treil-volberg99,volberg03,MR2407233}
and the recent preprints by Lacey, Sawyer and
Uriarte-Tuero~\cite{lacey-sawyer-uriarteP,lacey-sawyer-uriarteP2}.)  More precisely,
given a singular integral $T$, it is conjectured that $T:
L^p(v)\rightarrow L^p(u)$ if and only if for every cube $Q$,
\begin{gather*}
\int_Q |T(v^{1-p'} \chi_Q)(x)|^p u(x)\,dx \leq C\int_Q v(x)^{1-p'}\,dx
\\
\int_Q |T(u\chi_Q)(x)|^{p'} v(x)^{1-p'}\,dx \leq C\int_Q u(x)\,dx.
\end{gather*}
The necessity of these conditions is immediate.  The best known
results are for $p=2$; partial results (with additional hypotheses)
are known for other values of $p$.   These results are of great
interest not only because of the elegance of this conjecture but also
because of their connection with $Tb$-theorems on non-homogeneous spaces
(see~\cite{volberg03} and the references it contains).

Testing conditions and $A_p$ bump conditions are not
readily comparable: they represent two fundamentally different
approaches to the two-weight problem.  While both approaches are
important, we believe that bump conditions have several advantages
over testing conditions.  First, they are universal, geometric
conditions: they are independent of the operators and any pair
yields norm inequalities for a range of operators.  Second, they are
much easier to check than the testing conditions, and it is very
easy to construct examples of weights that do and do not satisfy a
given bump condition.  (For many examples and a general technique
for constructing them, see~\cite{cruz-martell-perezBook}.)  Third,
they are not tied to $L^2$, unlike testing conditions where the
transition from $p=2$ to all $p$ has proved to be very difficult.
(In this regard, we note that in~\cite{nazarov-treil-volberg99} it
was claimed---without proof---that in the specific case they were
considering, testing conditions were not sufficient.)

\subsection*{Maximal singular integrals}
Given a singular integral $T$ with convolution kernel $K$, recall
that the associated maximal singular integral is defined by
\[ T_*f(x) = \sup_{\epsilon> 0} |T_\epsilon f(x)| =
\sup_{\epsilon> 0} \left|\int_{|y|>\epsilon}
K(y)f(x-y)\,dy\right|. \]
Somewhat surprisingly, both Theorem~\ref{thm:sio-onewt} and
Theorem~\ref{thm:sio-twowt} remain true if the singular integral is
replaced by the associated maximal singular integral.

\begin{theorem} \label{thm:max-singular}
Given $p$, $1<p<\infty$, and $w\in A_p$, then
inequality~\eqref{eqn:sio-onewt} holds if $T$ is replaced by $T_*$,
where $T$ is the Hilbert transform, a Riesz transform or the
Beurling-Ahlfors operator.    Similarly, if the pair $(u,v)$
satisfies \eqref{eqn:sio-twowt1}, then inequality
\eqref{eqn:sio-twowt2} holds if $T$ is replaced by $T_*$.
\end{theorem}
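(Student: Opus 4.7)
The plan is to reduce Theorem~\ref{thm:max-singular} to the already established Theorems~\ref{thm:sio-onewt} and~\ref{thm:sio-twowt} by exploiting the same dyadic Haar shift approximations that were used there. For each of the three kernels under consideration the standard Cotlar pointwise inequality
\begin{equation*}
T_* f(x) \leq C\,M_\delta(Tf)(x) + C\,Mf(x), \qquad x\in \re^n,\ 0<\delta<1,
\end{equation*}
holds, where $M_\delta f = \bigl(M(|f|^\delta)\bigr)^{1/\delta}$; this comes from the usual size and smoothness properties of the kernels.

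For the one-weight estimate, a direct application of Cotlar combined with the sharp weighted bound for $M$ and Theorem~\ref{thm:sio-onewt} loses an extra factor of $[w]_{A_p}^{1/(p-1)}$ and so is not sharp. To avoid this loss I would work at the level of the Haar shifts that already underlie the proof of Theorem~\ref{thm:sio-onewt}. Each operator $T$ is recovered as an average over translated and dilated dyadic grids $\D$ of a Haar shift $\S^{\D}$, so $T_*$ is controlled by the corresponding average of the maximal Haar shifts $\S^{\D}_* f := \sup_{n}|\S^{\D}_n f|$, where $\S^{\D}_n$ are the natural truncations of $\S^{\D}$. One then verifies that the local mean oscillation decomposition of~\cite{lernerP2009}, which drives the non-maximal case, applies to $\S^{\D}_*$ with the same Calder\'on--Zygmund type oscillation estimate: splitting $f = f\chi_{3Q}+f\chi_{\re^n\setminus 3Q}$, the local piece is handled by the weak $(1,1)$ boundedness of $\S^{\D}_*$, while the tail contribution is comparable, uniformly in the truncation parameter, to $\avgint_{3Q}|f|$ thanks to the locality and cancellation built into the Haar shift kernels. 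From this point on, the very same sparse-operator argument already used for $\S^{\D}$ yields the sharp $A_2$ bound for $\S^{\D}_*$, and the sharp extrapolation theorem of Dragi\v{c}evi\'c \emph{et al.} delivers the full range $1<p<\infty$.

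For the two-weight case, once Lerner's decomposition dominates $\S^{\D}_* f$ pointwise by a positive sparse operator $\sum_{Q\in\F}\bigl(\avgint_Q|f|\bigr)\chi_Q$ for a sparse family $\F$, the bump condition \eqref{eqn:sio-twowt1} with $\bar{A}\in B_{p'}$ and $\bar{B}\in B_p$ implies \eqref{eqn:sio-twowt2} for $T_*$ through the same Orlicz maximal-function duality argument that proves Theorem~\ref{thm:sio-twowt}; no new Orlicz estimates are required.

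The main obstacle is the oscillation estimate for the maximal Haar shift: one has to show that taking $\sup_{n}$ over truncations does not spoil the local mean oscillation bound on a cube $Q$. This reduces to a careful but standard separation of the local and far-field contributions to each $\S^{\D}_n f$, and is the step where the specific structure of the Haar shift kernels really enters. Once this is in place, both parts of Theorem~\ref{thm:max-singular} follow from the machinery already developed for $T$.
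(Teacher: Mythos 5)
Your proposal tracks the paper's argument closely: reduce to maximal generalized Haar shift operators, establish a local mean oscillation bound that is uniform in the truncation parameter $\epsilon$, and then run Lerner's decomposition, sharp extrapolation, and the Orlicz duality argument exactly as in the non-maximal case. You also correctly discard the Cotlar route as losing a factor of $[w]_{A_p}$, and you correctly identify the uniform oscillation estimate as the one genuinely new step.

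The one place where your description is imprecise is the far-field piece. You say $T_\epsilon(f\chi_{\re^n\setminus 3Q})$ is ``comparable, uniformly in the truncation parameter, to $\avgint_{3Q}|f|$.'' For a generalized Haar shift of index $\tau$ the paper's observation is both stronger and simpler: splitting at the dyadic ancestor $Q_0^\tau$ rather than $3Q$, the far-field piece $T_\epsilon(f\chi_{\re^n\setminus Q_0^\tau})$ is \emph{exactly constant on $Q_0$}, for every $\epsilon$. The reason is purely dyadic: any cube $Q$ in the sum that meets both $Q_0$ and $\re^n\setminus Q_0^\tau$ must properly contain $Q_0^\tau$, so $|Q_0|<2^{-\tau n}|Q|$, and by property (b) of the defining functions $\gamma_Q$ each contributing $\gamma_Q$ is constant on $Q_0$. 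That constant need not be comparable to $\avgint_{3Q}|f|$ at all; its magnitude is irrelevant because it is subtracted out of the local mean oscillation. Only the weak $(1,1)$ bound for $T_*$ applied to $f\chi_{Q_0^\tau}$ is then used. This exact constancy is precisely what separates Haar shifts from general Calder\'on--Zygmund kernels, where the far-field term instead contributes a sum $\sum_i 2^{-i}\avgint_{2^iQ}|f|$, producing the extra $[w]_{A_2}$ factor and the $p>n$ restriction the paper explicitly flags. Your outline would still yield the correct oscillation inequality, but you should recognize that the mechanism for the tail is cleaner than ``comparable to the average,'' and that this is where the Haar shift structure (rather than generic CZ smoothness) is indispensable.
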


In the one-weight case, Theorem~\ref{thm:max-singular} was proved
very recently by Hyt\"onen {\em et al.}
\cite{hytonen-lacey-reguera-vagharshakyanP2009}.   Their proof used
a very general family of ``maximal'' dyadic shift operators and a
characterization of the two-weight norm inequalities for maximal
singular integrals due to Lacey, Sawyer and
Uriarte-Tuero~\cite{lacey-sawyer-uriarteP}.  In the two-weight case
this result is new.   In both the one and two-weight case our
approach is to prove the corresponding result for the associated
maximal dyadic shift operators.

\begin{remark}
Very recently, Lerner~\cite{lernerP2010} has proved Theorem~\ref{thm:max-singular} in the one-weight case for
general Calder\'on-Zygmund maximal singular integral operators when $p>3$.
\end{remark}

\subsection*{Dyadic paraproducts and constant Haar multipliers}
Let $\Delta$ denote the collection of dyadic cubes in $\re$. We
consider two operators defined on the real line. A function $b$ is
in dyadic $BMO$, we write $b\in BMO^d$, if
\[ \|b\|_{*,d} = \sup_{I\in\Delta} \left(\avgint_I |b(x)-b_I|^2\,dx\right)^{1/2} <
\infty, \]
where $b_I =\avgint_I b(x)\,dx$. Given a dyadic interval $I$, $I_+$
and $I_-$ are its right and left halves, and the Haar function $h_I$
is defined by
\[ h_I(x)  =  |I|^{-1/2}\big(\chi_{I_-}(x) - \chi_{I_+}(x)\big). \]
Define the dyadic paraproduct $\pi_b$ by
\[ \pi_b f(x) = \sum_{I\in \Delta} f_I \la b, h_I \ra h_I(x).  \]
For an overview of the history and properties of the dyadic
paraproduct, we refer the reader to Pereyra~\cite{MR1864538}.

\begin{theorem} \label{thm:paraproduct}
Given a function $b\in BMO^d$, and $p$, $1<p<\infty$, then for all
$w\in A_p$,
\[ \|\pi_b f\|_{L^p(w)} \leq
C_{p} \|b\|_{*,d}\,
[w]_{A_p}^{\max\left(1,\frac{1}{p-1}\right)}\|f\|_{L^p(w)}. \]
Furthermore, given a pair $(u,v)$ that satisfies
\eqref{eqn:sio-twowt1}, then
\[ \|\pi_b f\|_{L^p(u)} \leq
C\|b\|_{*,d}\|f\|_{L^p(v)}. \]
\end{theorem}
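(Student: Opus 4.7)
The plan is to adapt to $\pi_b$ the strategy used for the singular integrals in Theorems~\ref{thm:sio-onewt} and~\ref{thm:sio-twowt}: prove a local mean oscillation estimate for $\pi_b f$ on each dyadic interval, apply Lerner's decomposition to dominate $\pi_b f$ pointwise by a positive sparse averaging operator, and then invoke the sharp one- and two-weight bounds for such sparse operators already developed in the paper.

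The central step is the pointwise oscillation bound
\[
\omega_\lambda(\pi_b f;Q) \;\le\; C_\lambda\,\|b\|_{*,d}\,\avgint_Q |f(y)|\,dy, \qquad Q\in\Delta,
\]
where $\omega_\lambda(\cdot;Q)$ is Lerner's local mean oscillation. To see this, fix $x\in Q$ and split
\[
\pi_b f(x) = \sum_{I\supsetneq Q} f_I\,\la b,h_I\ra\,h_I(x) + \sum_{I\subseteq Q,\ I\ni x} f_I\,\la b,h_I\ra\,h_I(x).
\]
Each Haar function $h_I$ with $I\supsetneq Q$ is constant on $Q$, so the first sum is constant on $Q$; a direct comparison with the Haar expansion of $\pi_b(f\chi_Q)$ shows the second sum agrees with $\pi_b(f\chi_Q)(x)$ modulo a further additive constant on $Q$. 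Hence $\omega_\lambda(\pi_b f;Q)=\omega_\lambda(\pi_b(f\chi_Q);Q)$, and it suffices to estimate the right-hand side.

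The dyadic paraproduct is classically weak type $(1,1)$ with constant $C\|b\|_{*,d}$: its $L^2$ boundedness follows from Carleson embedding (since $\{|\la b,h_I\ra|^2\}_{I\in\Delta}$ is a Carleson sequence with constant $\|b\|_{*,d}^2$), and a standard dyadic Calder\'on--Zygmund decomposition upgrades it to weak type $(1,1)$. Applied to $g=f\chi_Q$ with threshold $t$ chosen so that the right-hand side of the weak $(1,1)$ inequality equals $\lambda|Q|$, this yields the claimed oscillation bound. Lerner's decomposition then produces a sparse family $\mathcal{S}\subset\Delta$ and a constant $m_0$ such that
\[
|\pi_b f(x)-m_0| \;\le\; C\,\|b\|_{*,d}\sum_{Q\in\mathcal{S}}\Bigl(\avgint_Q|f|\,dy\Bigr)\chi_Q(x).
\]

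The sharp $A_p$ and $A_p$-bump bounds for the positive sparse averaging operator on the right---precisely the estimates that drive the proofs of Theorems~\ref{thm:sio-onewt} and~\ref{thm:sio-twowt}---then deliver both the one-weight inequality with exponent $\max(1,1/(p-1))$ and the two-weight inequality under \eqref{eqn:sio-twowt1}. The main obstacle I anticipate is the oscillation estimate: the decisive observation is that the portion of $\pi_b f$ coming from intervals strictly containing $Q$ is constant on $Q$, which reduces the oscillation on $Q$ to that of the localized paraproduct; from there the classical weak-$(1,1)$ bound closes the argument, and everything else is a straight application of the machinery already in place.
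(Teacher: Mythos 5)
Your proposal is correct and follows essentially the same route as the paper: the paper proves Theorem~\ref{thm:paraproduct} by first establishing exactly the local mean oscillation estimate $\omega_\lambda(\pi_b f,Q)\le \frac{C\|b\|_{*,d}}{\lambda}\avgint_Q|f|$ using the locality of $\pi_b$ (the off-$Q$ contribution is constant on $Q$) and the weak $(1,1)$ bound for $\pi_b$, and then runs the identical Lerner-decomposition-plus-duality machinery used for the Haar shift operators in Theorems~\ref{thm:haarshift-onewt} and~\ref{thm:haarshift-twowt}. Your variant of the locality step, splitting the Haar sum into $I\supsetneq Q$ and $I\subseteq Q$ rather than splitting $f$ as $f\chi_Q+f\chi_{\re\setminus Q}$, is a cosmetic difference that leads to the same oscillation bound.
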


In the one-weight case, Theorem~\ref{thm:paraproduct} was first proved by
Beznosova~\cite{MR2433959} using Bellman function techniques.   A
different proof that avoided Bellman functions but used two-weight
inequalities was given in~\cite{hytonen-lacey-reguera-vagharshakyanP2009}.

\bigskip

Given a sequence $\alpha=\{\alpha_I\}_{I\in\Delta}\in\ell^\infty$,
define the constant Haar multiplier  $T_\alpha$ by
$$
T_\alpha f(x) =\sum_{I\in\Delta} \alpha_{I}\,\la
f,h_I\rangle\,h_I(x).
$$
If $\alpha_I=1$, then $T_\alpha$ is the identity operator.   For
more on the properties of these operators, see
Pereyra~\cite{MR1864538}. The analog of
Theorem~\ref{thm:paraproduct} is true for constant Haar multipliers.

\begin{theorem} \label{thm:haarmult}
Given a sequence $\alpha = \{\alpha_I\}_{I\in\Delta} \in
\ell^\infty$, and $p$, $1<p<\infty$, then for all $w\in A_p$,
\[ \|T_\alpha f\|_{L^p(w)} \leq
C_{p} \|\alpha\|_{\ell^\infty}\,
[w]_{A_p}^{\max\left(1,\frac{1}{p-1}\right)}\|f\|_{L^p(w)}. \]
Furthermore, given a pair $(u,v)$ that satisfies
\eqref{eqn:sio-twowt1}, then
\[ \|T_\alpha f\|_{L^p(u)} \leq
C \|\alpha\|_{\ell^\infty}\|f\|_{L^p(v)}. \]
\end{theorem}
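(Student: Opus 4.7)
The plan is to observe that $T_\alpha$ is the simplest example of a cancellative dyadic Haar shift operator, of ``complexity'' $(0,0)$, and then to apply directly the sharp one- and two-weight estimates for Haar shift operators that have already been established earlier in the paper via Lerner's local mean oscillation decomposition. Concretely, we write
\[
T_\alpha f(x) = \sum_{I \in \Delta} a_I^I \, \langle f, h_I \rangle \, h_I(x), \qquad a_I^I = \alpha_I,
\]
and after dividing through by $\|\alpha\|_{\ell^\infty}$ the coefficients satisfy the normalization $|a_I^I| \le 1$ required for a Haar shift of complexity $(0,0)$. Unlike the Hilbert or Riesz transforms, no averaging over translated and dilated grids is needed, since $T_\alpha$ is already dyadic on the standard grid $\Delta$.

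With this identification in hand, the one-weight inequality follows from the general sharp bound for Haar shift operators (the dyadic version underlying Theorem~\ref{thm:sio-onewt}), which yields
\[
\|T_\alpha f\|_{L^p(w)} \le C_p \|\alpha\|_{\ell^\infty} [w]_{A_p}^{\max(1, 1/(p-1))} \|f\|_{L^p(w)}.
\]
Similarly, the two-weight inequality follows from the dyadic version underlying Theorem~\ref{thm:sio-twowt}: if the pair $(u,v)$ satisfies \eqref{eqn:sio-twowt1}, then
\[
\|T_\alpha f\|_{L^p(u)} \le C \|\alpha\|_{\ell^\infty} \|f\|_{L^p(v)}.
\]
In both cases the proof reduces, via Lerner's decomposition, to controlling $T_\alpha f$ pointwise by a sparse positive dyadic operator, whose weighted norm estimates are already a part of the toolkit developed in the paper. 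Sharp extrapolation from the base case $p=2$ (Theorem~\ref{thm:sharp-extrapol}) supplies the correct exponent $\max(1, 1/(p-1))$ in the one-weight bound.

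The only real obstacle is the bookkeeping of checking that the Haar shift framework of the paper genuinely covers the degenerate complexity $(0,0)$ case, including any cancellation hypotheses used in Lerner's decomposition; one must verify that the abstract argument does not implicitly require strictly positive complexity. Once this is confirmed, no new ideas are needed, and the proof is in fact strictly simpler than the paraproduct case of Theorem~\ref{thm:paraproduct}, since no auxiliary $BMO^d$ coefficients must be controlled. In particular the $L^2$ base case is trivial by orthogonality of the Haar system, giving $\|T_\alpha f\|_{L^2} \le \|\alpha\|_{\ell^\infty}\|f\|_{L^2}$, so the entire weighted theory for $T_\alpha$ is inherited wholesale from the general Haar shift results.
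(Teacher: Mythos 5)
Your proposal is correct and follows exactly the paper's own argument: the paper proves Theorem~\ref{thm:haarmult} by observing that $T_\alpha$ is a Haar shift operator of index $\tau=0$ and then invoking Theorems~\ref{thm:haarshift-onewt} and~\ref{thm:haarshift-twowt}, with the $\|\alpha\|_{\ell^\infty}$ dependence coming out by linearity, just as you say. Your worry about whether the framework covers the degenerate index $\tau=0$ is moot, since the paper's definition of Haar shift and its weak $(1,1)$ lemma explicitly allow (indeed, single out) $\tau=0$.
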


In the special case when $\alpha_I=\pm 1$, Theorem~\ref{thm:haarmult}
was proved by Wittwer~\cite{MR1748283}.

\subsection*{Dyadic square functions}
Let $\Delta$ denote the collection of dyadic cubes in $\re^n$.
Given $Q\in \Delta$, let $\widehat{Q}$ be its dyadic parent:  the
unique dyadic cube containing $Q$ whose side-length is twice that of
$Q$.  The dyadic square function is the operator
\[ S_df(x) = \left(\sum_{Q\in\Delta}(f_Q-f_{\widehat{Q}})^2\chi_Q(x)\right)^{1/2}, \]
where
$ f_Q = \avgint_Q f(x)\,dx$.
For the properties of the dyadic square function we refer the reader
to Wilson~\cite{wilson07}.

\begin{theorem} \label{thm:square}
Given $p$, $1<p<\infty$, then for any $w\in A_p$,
\[ \|S_d f\|_{L^p(w)} \leq
C_{n,p}[w]_{A_p}^{\max\left(\frac12,\frac{1}{p-1}\right)}\|f\|_{L^p(w)}.
\]
Further, the exponent $\max\left(\frac{1}{2},\frac{1}{p-1}\right)$
is the best possible.
\end{theorem}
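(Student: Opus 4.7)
The plan is to combine the sharp Rubio de Francia extrapolation theorem of Dragi\v{c}evi\'c \emph{et al.} (Theorem~\ref{thm:sharp-extrapol}) with a direct proof of a one-exponent estimate based on Lerner's local-mean-oscillation decomposition, the same machinery already exploited earlier in the paper for singular integrals. Since the function $p \mapsto \max(1/2,\, 1/(p-1))$ equals $1/(p-1)$ on $(1,3]$ and equals $1/2$ on $[3,\infty)$, starting from the endpoint estimate
\[ \|S_d f\|_{L^3(w)} \leq C_n\,[w]_{A_3}^{1/2}\|f\|_{L^3(w)}, \qquad w \in A_3, \]
and applying sharp extrapolation yields, for every $w \in A_p$, the constant $C_n[w]_{A_p}^{(1/2)\max(1,\, 2/(p-1))} = C_n[w]_{A_p}^{\max(1/2,\, 1/(p-1))}$, which is exactly the upper bound asserted. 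Hence the positive part of the theorem reduces to this single $L^3$ estimate.

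To prove it, I would apply Lerner's decomposition not to $S_d f$ itself but to the squared function
\[ g(x) := (S_d f)(x)^2 = \sum_{Q \in \Delta} a_Q^2\,\chi_Q(x), \qquad a_Q := f_Q - f_{\widehat{Q}}, \]
which is already a positive sum indexed by dyadic cubes. Fixing a cube $Q$ and splitting $g|_Q = c_Q + (S_d^Q f)^2$, where $c_Q = \sum_{P \supseteq Q} a_P^2$ is a constant and $S_d^Q$ is the local dyadic square function supported on strict descendants of $Q$, the constant part is invisible to the local mean oscillation. For the local part, Chebyshev together with the Parseval-type identity $\sum_{P \subsetneq Q} a_P^2\,|P| \leq C\int_Q |f - f_Q|^2$ yield $\omega_\lambda(g; Q) \leq C\avgint_Q |f|^2\,dx$. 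Feeding this bound into Lerner's formula produces a sparse family $\mathcal{S}$ of dyadic cubes and the pointwise domination
\[ S_d f(x)^2 \leq C\sum_{R \in \mathcal{S}} \bigl\langle |f|^2\bigr\rangle_R\,\chi_R(x) \quad \text{a.e.} \]

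With the sparse bound in hand, I would take $L^{3/2}(w)$ norms of both sides, dualize against $h \in L^3(w)$, and exploit the sparseness (disjoint major subsets $E_R \subseteq R$ with $|R| \leq 2|E_R|$) to reduce matters to the estimate
\[ \sum_{R \in \mathcal{S}} \bigl\langle |f|^2\bigr\rangle_R\,\bigl\langle hw\bigr\rangle_R\,|R| \leq C_n[w]_{A_3}\,\|f\|_{L^3(w)}^2\,\|h\|_{L^3(w)}. \]
This follows from the $A_3$ identity $\langle w\rangle_R\,\langle w^{-1/2}\rangle_R^2 \leq [w]_{A_3}$ combined with two applications of Buckley's sharp bound $\|M\|_{L^p(\mu) \to L^p(\mu)} \leq C_n[\mu]_{A_p}^{1/(p-1)}$ for the Hardy-Littlewood maximal operator (once in $L^3(w)$, once in $L^{3/2}(w)$). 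The main obstacle, and the reason for applying Lerner's decomposition to $(S_d f)^2$ rather than to $S_d f$, is to retain the factor $1/2$ on $[w]_{A_3}$: the square structure must be preserved throughout so that Cauchy-Schwarz is invoked only once, rather than squaring the $A_3$ cost.

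Sharpness for every $p$ is verified through Buckley-type examples. In dimension one, testing against the pair $w_\epsilon(x) = |x|^{(p-1)(1-\epsilon)}$ on $(0,1)$ with $f_\epsilon(x) = x^{\epsilon-1}\chi_{(0,1)}(x)$ and letting $\epsilon \to 0^+$, a direct computation shows that $\|S_d f_\epsilon\|_{L^p(w_\epsilon)}/\|f_\epsilon\|_{L^p(w_\epsilon)}$ grows no slower than $[w_\epsilon]_{A_p}^{\max(1/2,\,1/(p-1))}$, matching the upper bound and establishing optimality of the exponent.
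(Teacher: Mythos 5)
Your skeleton---extrapolate from $p=3$ and apply Lerner's decomposition to $(S_df)^2$ rather than to $S_df$---is exactly the paper's. The gap is in the local mean oscillation estimate, and it is fatal. You obtain $\omega_\lambda((S_d f)^2, Q)\le C\avgint_Q |f|^2\,dx$ by combining Chebyshev with the Parseval identity, i.e.\ by using the $L^2$ boundedness of the local square function. The paper instead uses the \emph{weak $(1,1)$} bound for $S_d$, fed into inequality~\eqref{eqn:mean-est1} with $p=1/2$, which gives the strictly stronger estimate $\omega_\lambda((S_d f)^2, Q)\le C\big(\avgint_Q |f|\,dx\big)^2$; by Jensen the latter is smaller, and the difference is not cosmetic.

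With your weaker bound the sparse term becomes $\sum_R \langle |f|^2\rangle_R\,\chi_R$, and the bilinear estimate you propose,
\[\sum_{R\in\mathcal{S}}\langle |f|^2\rangle_R\,\langle hw\rangle_R\,|R|\leq C\,[w]_{A_3}\,\|f\|_{L^3(w)}^2\,\|h\|_{L^3(w)},\]
is false as stated. Factoring $\langle |f|^2\rangle_R$ forces you to insert $w\,w^{-1}$, which produces the scalar $\frac{w(R)}{|R|}\cdot\frac{w^{-1}(R)}{|R|}$, an $A_2$-type quantity that need not be controlled by $[w]_{A_3}$ since $A_2\subsetneq A_3$. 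Concretely, for $n=1$, $w(x)=|x|^{3/2}$, $R=(0,1)$, $f=h=w^{-1/3}\chi_R$, one has $[w]_{A_3}<\infty$, $\|f\|_{L^3(w)}=\|h\|_{L^3(w)}=1$ and $\langle hw\rangle_R=\tfrac12$, yet $\langle |f|^2\rangle_R=\int_0^1 x^{-1}\,dx=\infty$. The paper's bound $\big(\avgint_R |f|\big)^2$ factors instead as $\big(\tfrac{w^{-1/2}(R)}{|R|}\big)^2\tfrac{w(R)}{|R|}\le [w]_{A_3}$ times $M^d_{w^{-1/2}}(fw^{1/2})^2\,M^d_w h$, which is precisely what the duality argument in $L^3(w)$ needs. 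This is the whole point of the remark after \eqref{eqn:mean-est1} in Section~\ref{section:lerner}: passing through the weak $(1,1)$ inequality, rather than an $L^2$ bound plus Chebyshev or Kolmogorov, is what keeps the exponent down. You correctly identified that the square structure has to be preserved, but the Parseval route loses it.

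A secondary problem is the sharpness argument. The pair $w_\epsilon=|x|^{(p-1)(1-\epsilon)}$, $f_\epsilon=x^{\epsilon-1}\chi_{(0,1)}$ is Buckley's example for $M$ and yields growth of order $[w_\epsilon]_{A_p}^{1/(p-1)}$, which only matches the exponent $\max(1/2,\,1/(p-1))$ when $p\le3$. For $p>3$ the binding exponent is $1/2>1/(p-1)$, and one must show something these weights do not give. The paper handles $p>3$ by contradiction: a Rubio de Francia iteration transfers a hypothetical bound $\phi([w]_{A_{p_0}})$ with $\phi(t)=o(t^{1/2})$ to the unweighted growth estimate $\|S_d f\|_{L^p}\le C_1\phi(C_2p)\|f\|_{L^p}$ for all large $p$, and then exhibits an explicit $f$ with $\|S_df\|_p\ge cp^{1/2}\|f\|_p$, which is incompatible.
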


The exponent in Theorem \ref{thm:square} was first conjectured
by Lerner~\cite{MR2200743} for the continuous square function; he
also showed it was the best possible.   In \cite{MR2524658} he proved that
for $p>2$ the sharp exponent is at most $p'/2>\max(\frac{1}{2},\frac{1}{p-1})$.  When $p=2$,
Theorem~\ref{thm:square} was proved by Wittwer~\cite{MR1897458} and by
Hukovic, Treil and Volberg~\cite{MR1771755};
this was extended to $p<2$ by extrapolation in \cite{MR2140200} and
examples were given to show that in this range the exponent is the
best possible.

\begin{remark}
Very recently, Lerner~\cite{lernerP2010} proved the analog of
Theorem~\ref{thm:square} for continuous square functions.  His proof
uses the intrinsic square function introduced by
Wilson~\cite{wilson07}.
\end{remark}

\begin{theorem} \label{thm:Bp-bump:Sd}
Fix $p$, $1<p<\infty$.  Suppose $1<p \leq 2$, and  $B$ is a Young
function such that $\bar{B}\in B_p$,  If the pair $(u,v)$ satisfies
\begin{equation} \label{eqn:Sd1}
  \sup_Q \|u^{1/p}\|_{p,Q} \|v^{-1/p}\|_{B,Q}  < \infty,
\end{equation}
then
\begin{equation} \label{eqn:Sd2}
 \|S_d f\|_{L^p(u)} \leq C\|f\|_{L^p(v)}.
\end{equation}
Suppose $2<p<\infty$, and $A$ and $B$  are Young functions such that
$\bar{A}\in B_{(p/2)'}$ and $\bar{B}\in B_p$.   If the pair $(u,v)$
satisfies
\begin{equation}\label{eqn:bp-bumpSqrt}
\sup_Q \|u^{2/p}\|_{A,Q}^{1/2} \|v^{-1/p}\|_{B,Q} < \infty,
\end{equation}
then \eqref{eqn:Sd2} holds.
\end{theorem}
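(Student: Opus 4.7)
The plan is to follow this paper's general scheme: apply Lerner's local mean oscillation decomposition to $(S_d f)^2$ to produce a sparse pointwise bound, and then use the hypothesized bump condition to estimate the resulting sparse expression. The key preliminary step is to control the oscillation $\omega_\lambda((S_d f)^2; Q)$. I would observe that on a cube $Q$,
\[
(S_d f(x))^2 \,=\, C_Q + B_Q(x)^2,
\]
where $C_Q := \sum_{I \supsetneq Q}(f_I - f_{\widehat{I}})^2$ is constant on $Q$ and $B_Q(x)^2 := \sum_{I \subseteq Q}(f_I - f_{\widehat{I}})^2 \chi_I(x)$ depends only on $f|_Q$. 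Choosing the centering constant to be $C_Q$ yields $|(S_d f)^2 - C_Q| = B_Q^2$ on $Q$, and Chebyshev together with the $L^2$-boundedness of $S_d$ gives
\[
\omega_\lambda((S_d f)^2; Q) \,\leq\, c_\lambda \avgint_Q |f - f_Q|^2\,dx.
\]
Lerner's decomposition then produces a sparse family $\S$ such that
\[
(S_d f(x))^2 \,\leq\, C \sum_{Q \in \S} \Big(\avgint_Q |f|^2\,dy\Big)\,\chi_Q(x).
\]

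When $1 < p \leq 2$, raising to the $p/2$ power and invoking the sub-additivity $(\sum a_Q)^{p/2} \leq \sum a_Q^{p/2}$ (valid since $p/2 \leq 1$ and $a_Q \geq 0$) gives
\[
\|S_d f\|_{L^p(u)}^p \,\leq\, C \sum_{Q \in \S} \Big(\avgint_Q |f|^2\Big)^{p/2} u(Q).
\]
Writing $u(Q)^{1/p} = |Q|^{1/p}\|u^{1/p}\|_{p,Q}$ and applying generalized H\"older with $B, \bar B$ to the $f$-average, the bump (\ref{eqn:Sd1}) collapses the geometric factors; the resulting Carleson-type sum is handled using $\bar B \in B_p$ (which guarantees $L^p$-boundedness of the Orlicz maximal function built from $\bar B$, applied to $f v^{1/p}$) together with the sparseness of $\S$. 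When $2 < p < \infty$, I would instead dualize $\|(S_d f)^2\|_{L^{p/2}(u)}$ against $g \in L^{(p/2)'}(u)$ with $\|g\|_{L^{(p/2)'}(u)} \leq 1$; the sparse bound then becomes a bilinear form $\sum_Q (\avgint_Q |f|^2) \int_Q g u$. Applying generalized H\"older with $(B, \bar B)$ to the $f$-factor and with $(A, \bar A)$ to the $gu$-factor, (\ref{eqn:bp-bumpSqrt}) eliminates the geometric factor; the hypotheses $\bar A \in B_{(p/2)'}$ and $\bar B \in B_p$ then supply the two Orlicz maximal function bounds needed to conclude via H\"older in $(p/2, (p/2)')$.

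The main obstacle is obtaining the oscillation estimate in a form compatible with the quadratic structure of the square function: an $L^2$-type average must appear on the right, not the $L^1$-type average produced by a naive adaptation of the singular-integral argument in Theorem~\ref{thm:sio-twowt}. This $L^2$-scaling is precisely what forces the exponent $1/2$ on the $u$-bump in (\ref{eqn:bp-bumpSqrt}) and explains why the two ranges of $p$ are handled separately. A secondary technical point is setting up the duality in the $p > 2$ case so that the resulting bilinear form matches the shape of the bump condition, and ensuring that the necessary Orlicz maximal estimates carry through the sparse family $\S$ without loss.
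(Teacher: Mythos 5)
Your overall strategy --- apply Lerner's decomposition to $(S_d f)^2$ to obtain a sparse pointwise bound, then close by generalized H\"older and Orlicz maximal function estimates --- is exactly the paper's approach, but your oscillation estimate is too weak and the later steps cannot be made to close with it.

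You derive $\omega_\lambda((S_d f)^2, Q_0) \lesssim \avgint_{Q_0}|f-f_{Q_0}|^2\,dx$ from Chebyshev together with the $L^2$-boundedness of $S_d$, leading to a sparse bound with $L^2$-averages, $(S_d f)^2 \lesssim \sum_{Q\in\S}\big(\avgint_Q|f|^2\big)\chi_Q$. The paper's Lemma~\ref{lemma:osc-est-Sd} instead uses the \emph{weak $(1,1)$} boundedness of $S_d$ together with the rearrangement inequality \eqref{eqn:mean-est1} at exponent $p=1/2$, producing the strictly sharper bound $\omega_\lambda((S_d f)^2, Q_0)\lesssim \big(\avgint_{Q_0}|f|\,dx\big)^2$, i.e.\ the \emph{square of the $L^1$-average}, and hence a sparse bound with $\big(\avgint_Q|f|\big)^2$. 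Section~\ref{section:lerner} of the paper flags this point explicitly: the ability to exploit the weak $(1,1)$ inequality directly via \eqref{eqn:mean-est1} is what lets the argument reach beyond the log-bumps of~\cite{MR2351373}.

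This distinction is essential, not cosmetic. Given $\big(\avgint_Q|f|\big)^2$, generalized H\"older is applied at the $L^1$ level, $\avgint_Q|f|\leq 2\|fv^{1/p}\|_{\bar B,Q}\|v^{-1/p}\|_{B,Q}$, and the resulting sum over the sparse family closes because $M_{\bar B}$ is bounded on $L^p$ --- which is precisely the hypothesis $\bar B\in B_p$. Your sparse expression instead carries $\avgint_Q|f|^2$; you then propose to apply generalized H\"older with $(B,\bar B)$ ``to the $f$-average'' and conclude via $M_{\bar B}(fv^{1/p})$, but this does not match the quantity in hand. Factoring $\avgint_Q|f|^2 = \avgint_Q|fv^{1/p}|^2v^{-2/p}$ forces an Orlicz--H\"older pair at the quadratic level, and the resulting maximal function would have to be bounded on $L^{p/2}$ rather than $L^p$. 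For $1<p\leq 2$ we have $p/2\leq 1$, so no $B_q$-type condition is even available; and for $p>2$ the required $B_{p/2}$-type condition does not follow from $\bar B\in B_p$ (a log-bump computation with $p$ near $2$ makes this explicit).

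Once your Chebyshev--$L^2$ step is replaced by the weak $(1,1)$ bound and \eqref{eqn:mean-est1} with $p=1/2$, the remainder of your outline --- raising to the power $p/2$ and using sub-additivity when $p\leq 2$, and dualizing $\|(S_d f)^2\|_{L^{p/2}(u)}$ with two separate applications of generalized H\"older when $p>2$ --- has the right shape. (You dualize in $L^{(p/2)'}(u)$; the paper dualizes against $L^{(p/2)'}(\re^n)$ with Lebesgue measure and keeps $u^{2/p}h$ together; the two are equivalent after a change of variable.)
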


Condition \eqref{eqn:Sd1} is the same condition for the maximal
operator to map $L^p(v)$ to $L^p(u)$, whereas condition
\eqref{eqn:bp-bumpSqrt} is more similar to the conditions needed for
singular integrals, but with a smaller ``bump'' on the left.  This
is easier to see in the scale of ``log bumps.''  As we noted above,
for a singular integral we need to take
$A(t)=t^p\log(e+t)^{p-1+\delta}$, $\delta>0$, but for the dyadic square function
it suffices to take $A(t)=t^{p/2}\log(e+t)^{p/2-1+\delta}$,
which after rescaling leads to $t^{p}\log(e+t)^{p/2-1+\delta}$.    This
difference in the behavior of the dyadic square function depending
on whether $p\leq 2$ or $p>2$ was first noted in
\cite{cruz-martell-perezBook}.  There we conjectured
Theorem~\ref{thm:Bp-bump:Sd} was true and proved it in some special
cases.  Furthermore, we proved   in~\cite{cruz-martell-perezBook} that this result is sharp in the scale
of log bumps:  if we take $p>2$ and let $A(t)=t^{p/2}\log(e+t)^{p/2-1+\delta}$, then the theorem holds for
$\delta>0$, when $A\in B_{(p/2)'}$, but not for $\delta=0$, when $A\not\in B_{(p/2)'}$.

\begin{remark}
Theorem~\ref{thm:square} remains true if we replace the $A_p$
condition by the dyadic $A_p$ condition (i.e., defined only with
respect to dyadic cubes).    Similarly, Theorem~\ref{thm:Bp-bump:Sd}
remains true if the weight conditions are restricted to dyadic cubes.
For both theorems this follows by examining the proofs and details are
left to the interested reader.
\end{remark}

\subsection*{The vector-valued maximal operator}
Let $M$ be the Hardy-Little\-wood maximal operator.  Given a
vector-valued function $f=\{ f_i\}$, and $q$, $1<q<\infty$, define
the vector-valued maximal operator $\M_q$ by
\[ \M_qf(x) = \left(\sum_{i=1}^\infty Mf_i(x)^q\right)^{1/q}. \]
The vector-valued maximal operator was introduced by C.
Fefferman and Stein~\cite{fefferman-stein71}; for more information
see \cite{MR807149}.

Similar to the dyadic square function, the behavior of the
vector-valued maximal operator depends on the relative sizes of $p$
and $q$.

\begin{theorem} \label{thm:vvmax-onewt}
Fix $q$, $1<q<\infty$.  Given $p$, $1<p<\infty$, if $w\in A_p$, then
\[ \|\M_q f \|_{L^p(w)} \leq C_{p,q,n}[w]_{A_p}^{\max(\frac{1}{q},\frac{1}{p-1})}
\left(\int_{\re^n} \|f(x)\|_{\ell^q}^pw(x)\,dx\right)^{1/p}. \]
Further, the exponent $\max\left(\frac{1}{q},\frac{1}{p-1}\right)$
is the best possible.
\end{theorem}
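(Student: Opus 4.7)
The plan is to combine the sharp Rubio de Francia extrapolation (Theorem~\ref{thm:sharp-extrapol}) with a single direct estimate at a well-chosen base exponent. Since the target exponent $\max(1/q, 1/(p-1))$ transitions from $1/(p-1)$ to $1/q$ at $p = q+1$, the natural base case to establish is
\begin{equation*}
\|\M_q f\|_{L^{q+1}(w)} \leq C\,[w]_{A_{q+1}}^{1/q}\,\bigl\|\,\|f\|_{\ell^q}\,\bigr\|_{L^{q+1}(w)}, \qquad w \in A_{q+1}.
\end{equation*}
Theorem~\ref{thm:sharp-extrapol} with base pair $(p_0, \beta) = (q+1, 1/q)$ then produces at any $p \in (1, \infty)$ the exponent $\beta\max\{1, (p_0-1)/(p-1)\} = \max\{1/q, 1/(p-1)\}$: for $p > q+1$ the factor is $1$, leaving $1/q$; for $p \leq q+1$ the factor is $q/(p-1)$, producing $1/(p-1)$. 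So everything reduces to the base estimate at $p = q+1$.

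To prove the base estimate, I would follow the unifying philosophy of the paper and apply Lerner's local mean oscillation decomposition to $\M_q f$, obtaining a sparse family $\mathcal{S}$ (depending on $f$) such that
\begin{equation*}
\M_q f(x) \leq C \sum_{Q \in \mathcal{S}} \omega_\lambda(\M_q f; Q)\,\chi_Q(x) + m_{\M_q f}(Q_0).
\end{equation*}
The crucial step is a refined estimate on the local oscillation that preserves the $\ell^q$ structure, namely
\begin{equation*}
\omega_\lambda(\M_q f; Q) \leq C \Bigl(\sum_i \Bigl(\avgint_{\widetilde Q} |f_i|\,dy\Bigr)^{q}\Bigr)^{1/q}
\end{equation*}
for a fixed enlargement $\widetilde Q$ of $Q$. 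This improves on the naive bound $\avgint_{\widetilde Q} \|f\|_{\ell^q}$ that would arise from Minkowski's inequality and the weak-type $(1,1)$ of $\M_q$ on $\|\cdot\|_{\ell^q}$-data; only the refined version retains the vector-valued structure needed to produce the exponent $1/q$ rather than the scalar exponent $1$. I would derive it by localizing $f$ to $\widetilde Q = 2Q$ (so that $\M_q(f\chi_{\widetilde Q^c})$ is nearly constant on $Q$ and absorbed into the centering term of the oscillation), then applying the Fefferman--Stein $L^{q,\infty}(\ell^q)$ endpoint bound to $f\chi_{\widetilde Q}$ together with Kolmogorov's inequality. The resulting $q$-sparse averaging operator is then estimated on $L^{q+1}(w)$ by duality: pairing against $g \in L^{(q+1)/q}(w)$ and exploiting the stopping-time structure of $\mathcal{S}$ reduces the estimate to a weighted inequality for the scalar maximal operator on the dual side, and Buckley's sharp $A_p$ bound for $M$ produces the factor $[w]_{A_{q+1}}^{1/q}$.

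Sharpness of the exponent is established by two families of examples. For $1<p \leq q+1$, sharpness of the exponent $1/(p-1)$ is inherited from the scalar $A_p$-sharpness of $M$: simply take $f = (f_1, 0, 0, \ldots)$ and use Buckley's classical power-weight examples for the scalar maximal operator. For $p > q+1$, sharpness of the exponent $1/q$ must come from the genuinely vector-valued structure: test with $\{f_i = \chi_{Q_i}\}_i$ on a suitable sparse sequence of disjoint cubes, following the standard optimality construction for the unweighted Fefferman--Stein inequality, combined with power weights $w(x) = |x|^\alpha$ chosen to approach the $A_p$ boundary.

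The principal obstacle is the refined oscillation estimate in the base case: producing a Lerner-type pointwise bound on $\M_q f$ that keeps the $\ell^q$-norm of the averages of the components rather than collapsing to the $L^1$-norm of $\|f\|_{\ell^q}$ via Minkowski. This preservation of the vector-valued structure inside the sparse decomposition is precisely the point where the theory for $\M_q$ diverges from its scalar analogue, and it is what forces the $q$-dependence in the sharp exponent $\max(1/q, 1/(p-1))$.
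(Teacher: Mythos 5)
Your reduction to the base exponent $p_0 = q+1$ via Theorem~\ref{thm:sharp-extrapol} is exactly what the paper does, and your overall template (Lerner decomposition, sparse estimate, duality) is the right one. But the crucial technical step is wrong, and the diagnosis of why the vector-valued structure must be preserved is off. Your proposed refined oscillation bound
\[
\omega_\lambda(\M_q f; Q) \leq C \Bigl(\sum_i \Bigl(\avgint_{\widetilde Q} |f_i|\,dy\Bigr)^{q}\Bigr)^{1/q}
\]
is false. Take $Q_0=[0,1]$, a large integer $N$, disjoint subintervals $I_1,\dots,I_N\subset Q_0$ each of length $2\lambda/N$, and $f_i=\chi_{I_i}$. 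Then the right side equals $2\lambda N^{1/q-1}\to 0$ as $N\to\infty$, while $\M_q f\ge 1$ on $\bigcup_i I_i$ (a set of measure $2\lambda$) and $\M_q f\approx 2\lambda$ off the spikes, so for $\lambda$ small, $\omega_\lambda(\M_q f,Q_0)\gtrsim 1$ uniformly in $N$. The heuristic derivation you sketch (localize, then use a weak $L^q(\ell^q)$ endpoint plus Kolmogorov) cannot produce such a bound precisely because the vector-valued maximal function dominates $\|f\|_{\ell^q}$ pointwise, and the $\ell^q$ norm of the component averages can be arbitrarily smaller than the rearrangement of $\|f\|_{\ell^q}$.

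What actually happens in the paper (Lemma~\ref{lemma:osc-est-Mq}, and analogously Lemma~\ref{lemma:osc-est-Sd} for the square function) is different and simpler: one applies Lerner's decomposition not to $\M_q^d f$ but to $(\M_q^d f)^q$. The localization $0\le \M_q^d f(x)^q - K_0^q \le \M_q^d(f\chi_{Q_0})(x)^q$ holds (using $\max(a,b)^q - b^q\le a^q$ componentwise), and then the \emph{ordinary} weak $(1,1)$ bound for $\M_q$ applied via inequality~\eqref{eqn:mean-est1} with the exponent $p=1/q$ gives
\[
\omega_\lambda\bigl((\M_q^d f)^q,Q_0\bigr) \leq \frac{C_{n,q}}{\lambda^q}\Bigl(\avgint_{Q_0}\|f(x)\|_{\ell^q}\,dx\Bigr)^q.
\]
Note the right side is the $q$-th power of the ``naive'' average, not the $\ell^q$ norm of the component averages. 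It is this passage to the $q$-th power, not a sharpened oscillation bound, that preserves the $q$-dependence: the subsequent duality is carried out in $L^{(p/q)'}$ against $(\M_q^d f)^q$, which at $p=q+1$ turns the $A_{q+1}$ constant into a first power in that pairing and hence into $[w]_{A_{q+1}}^{1/q}$ after undoing the power. Your proof has no substitute for this step, so the argument does not close. (Minor further remarks: the paper actually works with the dyadic $\M_q^d$ and passes to $\M_q$ by a standard approximation; and for sharpness when $p>q+1$, the paper uses the Fefferman--Pipher extrapolation-to-$L^p$ argument together with the known unweighted optimality from Stein, rather than a direct power-weight test, though your direct construction is plausible in principle.)
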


Theorem~\ref{thm:vvmax-onewt} is new.    A slightly worse bound than
Theorem~\ref{thm:vvmax-onewt} was implicit in the literature, but does
not appear to have been stated explicitly.    To get this weaker
estimate, note that when $q=p$, by the sharp result for the
Hardy-Littlewood maximal operator we have that
\[ \|\M_q f \|_{L^q(w)} \leq C_{q,n}[w]_{A_q}^{\frac{1}{q-1}}
\left(\int_{\re^n} \|f(x)\|_{\ell^q}^qw(x)\,dx\right)^{1/q}. \]
Then by adapting to this context the sharp
version of the Rubio de Francia extrapolation theorem, we get the
exponent $\frac{1}{q-1}$ if $p>q$ and $\frac{1}{p-1}$ if
$p<q$. Theorem~\ref{thm:vvmax-onewt} improves this bound for $p$
large.


\begin{theorem} \label{thm:vvmax-twowt}
Fix $q$, $1<q<\infty$.  Suppose $1<p \leq q$, and  $B$ is a Young
function such that $\bar{B}\in B_p$.  If the pair $(u,v)$ satisfies
\begin{equation} \label{eqn:vvmax1}
  \sup_Q \|u^{1/p}\|_{p,Q} \|v^{-1/p}\|_{B,Q}  < \infty,
\end{equation}
then
\begin{equation} \label{eqn:vvmax2}
 \|\overline{M}_q f\|_{L^p(u)} \leq C
\left(\int_{\re^n} \|f(x)\|_{\ell^q}^pv(x)\,dx\right)^{1/p}.
\end{equation}
Suppose $q<p<\infty$, and $A$ and $B$  are Young functions such that
$\bar{A}\in B_{(p/q)'}$ and $\bar{B}\in B_p$.   If the pair $(u,v)$
satisfies
\begin{equation}\label{eqn:vvmax3}
\sup_Q \|u^{q/p}\|_{A,Q}^{1/q} \|v^{-1/p}\|_{B,Q} < \infty,
\end{equation}
then \eqref{eqn:vvmax2} holds.
\end{theorem}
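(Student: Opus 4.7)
The plan is to follow the proof of Theorem~\ref{thm:Bp-bump:Sd} (the two-weight estimate for the dyadic square function) with the exponent $q$ in place of $2$. The two theorems have the same two-case structure and the role of the $L^p$ and Orlicz bumps is identical. The scalar bridge is the classical result of P\'erez: the condition $\sup_Q\|u^{1/p}\|_{p,Q}\|v^{-1/p}\|_{B,Q}<\infty$ with $\bar B\in B_p$ is sharp for the two-weight estimate $M:L^p(v)\to L^p(u)$; see~\cite{cruz-martell-perezBook}.

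\medskip
\emph{Case $1<p\le q$.} Hypothesis (\ref{eqn:vvmax1}) is precisely the scalar bump condition for $M$ at exponent $p$, so $M:L^p(v)\to L^p(u)$. I would upgrade this scalar estimate to the vector-valued inequality (\ref{eqn:vvmax2}) by the two-weight, Orlicz-bump extension techniques developed in \cite{cruz-martell-perezBook}: either by a vector-valued Calder\'on--Zygmund decomposition adapted to the bump condition, or by a Rubio de Francia iteration producing a weight $W\ge\|f\|_{\ell^q}$ with $MW\le cW$ and $\|W\|_{L^p(v)}\le 2\|\|f\|_{\ell^q}\|_{L^p(v)}$, which reduces the vector bound to the scalar one fibre by fibre. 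The restriction $p\le q$ is exactly what allows the vector-valued step to go through without an additional bump on $u$, just as in the $p\le 2$ case of Theorem~\ref{thm:Bp-bump:Sd}.

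\medskip
\emph{Case $q<p<\infty$.} I would dualize in $L^{p/q}(u)$, writing
\[
\|\overline M_q f\|_{L^p(u)}^q = \sup_{h}\ \int \sum_i (Mf_i)^q\, h u\, dx,
\]
where the supremum is over $h\ge 0$ with $\|h\|_{L^{(p/q)'}(u)}\le 1$. For each $i$, the sharp Orlicz maximal estimate
\[
\int (Mf_i)^q\, hu\,dx \;\le\; C\int |f_i|^q\, M_{\bar A}(hu)\,dx
\]
allows one to sum in $i$ and collapse the integrand to $\|f\|_{\ell^q}^q M_{\bar A}(hu)$. Applying H\"older with exponents $p/q$ and $(p/q)'$ against $v$, and using the two-weight bound for $M_{\bar A}$ on the dual side, which is precisely where the hypotheses $\bar A\in B_{(p/q)'}$ together with (\ref{eqn:vvmax3}) come into play, one recovers the scalar $L^p(v)$ norm of $\|f\|_{\ell^q}$.

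\medskip
The main obstacle is the technical bookkeeping in the second case, specifically verifying that the Orlicz bumps pass through the duality and H\"older chain so that (\ref{eqn:vvmax3}) arises naturally with the correct pairing of $\bar A\in B_{(p/q)'}$ against the test function $h$. This is essentially identical to the corresponding step in the $p>2$ case of Theorem~\ref{thm:Bp-bump:Sd}, and I do not expect genuinely new difficulties beyond the replacement of $2$ by $q$.
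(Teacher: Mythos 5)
You take a genuinely different route from the paper. The paper's proof reduces to the dyadic operator $\overline{M}_q^d$, applies Lerner's local-mean-oscillation decomposition (Theorem~\ref{thm:lerner}) to the function $(\overline{M}_q^d f)^q$ via Lemma~\ref{lemma:osc-est-Mq}, and feeds the bump condition into the resulting sum over the decomposition cubes $Q_j^k$ through the generalized H\"older inequality. You bypass Lerner's theorem entirely and instead propose classical dual-weight arguments. This is not a small difference: the dual-weight approach is essentially what was attempted in~\cite{cruz-martell-perezBook}, which the paper explicitly notes produced only special cases in the log-bump scale; the Lerner decomposition is what makes the full result go through.

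For $p>q$ there is a genuine gap. The estimate
\[
\int (Mf_i)^q\, hu\,dx \;\le\; C\int |f_i|^q\, M_{\bar A}(hu)\,dx
\]
is a two-weight Fefferman--Stein inequality for $M$ at the power $q>1$, uniform over all weights $hu$, and it holds only when $\bar A$ grows at least like $t\log(e+t)^{q-1+\epsilon}$; it fails, for instance, for $\bar A(t)=t\log(e+t)^{s}$ with $0<s<q-1$. That is a \emph{lower} bound on $\bar A$, whereas the hypothesis $\bar A\in B_{(p/q)'}$ is an \emph{upper} bound; the two are compatible only for a restricted class of Young functions. Enlarging $\bar A$ to cure this shrinks $A$ and alters the bump \eqref{eqn:vvmax3}, so the original hypothesis no longer matches what you used. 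A second, independent obstruction is that after H\"older against $v$ you need the two-weight Orlicz-maximal bound $M_{\bar A}:L^{(p/q)'}(u^{1-(p/q)'})\to L^{(p/q)'}(v^{1-(p/q)'})$, and the bump condition governing such an estimate is not \eqref{eqn:vvmax3}; in particular the exponent $1/q$ on $\|u^{q/p}\|_{A,Q}$ in \eqref{eqn:vvmax3} does not arise there. Lerner's decomposition sidesteps both issues simultaneously: by Lemma~\ref{lemma:osc-est-Mq} the quantity $(\overline{M}_q^d f)^q$ oscillates on a dyadic cube $Q_0$ by at most $C\big(\avgint_{Q_0}\|f\|_{\ell^q}\,dx\big)^q$, after which \eqref{eqn:vvmax3} is applied cube-by-cube via generalized H\"older, with no Fefferman--Stein estimate and no two-weight Orlicz-maximal bound needed.

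The $p\le q$ sketch is also too thin to assess: a Rubio de Francia majorant $W\ge\|f\|_{\ell^q}$ with $MW\le cW$ is a one-weight ($A_1$) device, and it does not by itself convert the scalar two-weight bump bound for $M$ into the vector-valued bound. The paper handles $p\le q$ by the same Lerner scheme, using $p/q\le 1$ to estimate $\int F^{p/q}u\,dx$ directly over the decomposition cubes without any dualization; the prior proofs in~\cite{perez00} also proceed by a direct decomposition rather than an iteration algorithm.
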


When $p>q$, Theorem~\ref{thm:vvmax-twowt} is sharp in the scale of
log bumps.  The example in~\cite{cruz-uribe-perez00} shows that if
$A(t)=t^{p/q}\log(e+t)^{p/q-1+\delta}$, then the result fails if $\delta=0$, when
$A\not\in B_{(p/q)'}$. (If    $\delta>0$, $A\in B_{(p/q)'}$). 
In the case $p\leq q$ Theorem~\ref{thm:vvmax-twowt} is not new: it
was first proved in~\cite{perez00}; for a different proof
see~\cite{cruz-martell-perezBook}.   We include it here for
completeness and to highlight the similarity to the dyadic square
function.  The case $p>q$ is new; it was first conjectured
in~\cite{cruz-martell-perezBook} where a few special cases were
proved.

\begin{remark}
To obtain Theorems~\ref{thm:vvmax-onewt} and \ref{thm:vvmax-twowt} we first consider the corresponding dyadic vector-valued maximal operator and establish both results for it. In such a case, as observed before for the dyadic square function, we ca replace the $A_p$
condition by the dyadic $A_p$ condition, and in \eqref{eqn:vvmax1}, \eqref{eqn:vvmax3} the sup can be taken over all dyadic cubes. Further details are
left to the interested reader.
\end{remark}

\medskip

\subsection*{Organization}
The remainder of this paper is organized as follows.  In
Section~\ref{section:prelim} we gather some basic results,
primarily about weighted norm inequalities, that are needed in
subsequent sections.   In Section~\ref{section:lerner} we give some
preliminary material about the local mean oscillation of a function
and state the decomposition theorem of Lerner.   In
Section~\ref{section:dyadic} we define the Haar shift operators and
prove the key estimate we need to apply Lerner's results.  In
Sections~\ref{section:main-proof}--\ref{section:Mq} we prove our
main results.   In Section~\ref{section:main-proof} we prove our
results for singular integrals by proving the corresponding results
for Haar shift operators.  As a corollary to these theorems we get
our results for dyadic paraproducts and constant Haar multipliers.
After the proof of Theorems~\ref{thm:sio-onewt}
and~\ref{thm:sio-twowt} we will briefly discuss the technical
obstructions which prevent us from applying  our approach directly
to a Calder\'on-Zygmund singular integral. The proof for maximal
singular integrals is very similar to the proof for singular
integrals, but since we introduce a new family of dyadic operators
we give these results in Section~\ref{section:maximal}.  The proofs
for square functions and vector-valued maximal operators are also
very similar to those for singular integrals, so we will only sketch
the proofs of these results in Sections~\ref{section:Sd} and
\ref{section:Mq}, highlighting the key changes.

\medskip

We would like to thank Andrei Lerner and
Michael Wilson for a clarifying discussion about the results in
Section~\ref{section:lerner}.

\section{Preliminary results}
\label{section:prelim}

In this section we state some basic results that we will need in our
proofs. The first is the sharp one-weight bound for the
Hardy-Littlewood maximal operator.  This result is due
to Buckley~\cite{MR1124164}; for an elementary proof, see
Lerner~\cite{MR2399047}.

\begin{theorem} \label{thm:sharpmax}
Given $p$, $1<p<\infty$, and any $w\in A_p$,
\[ \|Mf\|_{L^p(w)} \leq C_n\,{(p')^{1/p}(p)^{1/p'}}\,  [w]_{A_p}^{\frac{1}{p-1}}\,\|f\|_{L^p(w)}. \]
\end{theorem}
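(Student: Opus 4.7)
My plan is to adapt the elementary approach of Lerner. First I would reduce to the dyadic maximal operator $M^d$. By the standard shifted-grids trick, there is a dimensional constant $C_n$ and finitely many dyadic grids $\mathcal{D}^{(1)},\dots,\mathcal{D}^{(N_n)}$ such that $Mf(x)\leq C_n\sum_i M^{\mathcal{D}^{(i)}}f(x)$ pointwise; thus it suffices to prove the bound for $M^d$ on a fixed grid, with a constant independent of the grid.

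Next I would establish a pointwise comparison between $M^d$ and a weighted dyadic maximal operator. Set $\sigma=w^{1-p'}$, which satisfies $\sigma^{1-p}=w$. On any cube $Q$, Hölder's inequality gives
\[
\avgint_Q |f|\,dx\leq\Bigl(\avgint_Q|f|^p\,w\,dx\Bigr)^{1/p}\Bigl(\avgint_Q\sigma\,dx\Bigr)^{1/p'}.
\]
Raising to the $p$-th power and using the $A_p$ identity $\bigl(\avgint_Q\sigma\bigr)^{p-1}\bigl(\avgint_Q w\bigr)\leq[w]_{A_p}$ yields the pointwise estimate
\[
\bigl(M^df(x)\bigr)^p\leq[w]_{A_p}\,M_w^d(|f|^p)(x),
\]
where $M_w^d$ is the dyadic maximal operator with respect to $w\,dx$. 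Applying the same Hölder estimate on the Calder\'on--Zygmund cubes for $M^df$ at level $\lambda$ and summing over these disjoint cubes gives the weak-type bound $\|M^df\|_{L^{p,\infty}(w)}\leq[w]_{A_p}^{1/p}\|f\|_{L^p(w)}$.

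The main obstacle is upgrading this weak-type exponent $1/p$ to the sharp strong-type exponent $1/(p-1)$: a direct integration of the pointwise bound against $w\,dx$ fails because $M_w^d$ is unbounded on $L^1(w)$. My plan is to invoke the sharp reverse H\"older inequality for $A_p$ weights, $w\in A_p\Rightarrow w\in A_{p-\varepsilon}$ with $\varepsilon\sim c_n[w]_{A_p}^{-1/(p-1)}$ and $[w]_{A_{p-\varepsilon}}\leq 2[w]_{A_p}$. Repeating the previous step at exponent $p-\varepsilon$ produces a weak-type $(p-\varepsilon,p-\varepsilon)$ estimate with constant $\lesssim[w]_{A_p}^{1/p}$; interpolating via Marcinkiewicz against the trivial $L^\infty\to L^\infty$ bound then yields the strong-type $(p,p)$ bound, where the Marcinkiewicz reciprocal $\varepsilon^{-1/p}$ contributes an extra factor $[w]_{A_p}^{1/(p(p-1))}$ so that the total exponent on $[w]_{A_p}$ is $1/p+1/(p(p-1))=1/(p-1)$. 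Tracking the $p$-dependence through the Marcinkiewicz and reverse H\"older constants produces the exact prefactor $C_n(p')^{1/p}p^{1/p'}$; this careful bookkeeping is the principal technical difficulty.
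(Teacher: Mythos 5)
The paper does not prove this result itself; it attributes it to Buckley and refers to Lerner's 2008 article for an ``elementary proof.'' Although you invoke Lerner by name, what you actually outline is Buckley's original route: a weak $(p,p)$ bound with constant $[w]_{A_p}^{1/p}$ from H\"older's inequality on Calder\'on--Zygmund cubes, followed by the quantitative openness $w\in A_{p-\varepsilon}$ and Marcinkiewicz interpolation against $L^\infty$. Lerner's proof is genuinely different: it makes no appeal to reverse H\"older or to interpolation, but instead decomposes the level sets of $M^d(f\sigma)$ into sparse Calder\'on--Zygmund cubes and compares $\int (M^d(f\sigma))^p\,w\,dx$ directly with $\int (M^d_\sigma f)^p\,\sigma\,dx$, finishing with the universal bound for the weighted dyadic maximal operator (Lemma~\ref{lemma:wtdmax}); this in fact yields the stronger mixed estimate $\lesssim p'\,([w]_{A_p}[\sigma]_{A_\infty})^{1/p}$, of which the stated inequality is a corollary via $[\sigma]_{A_\infty}\le[\sigma]_{A_{p'}}=[w]_{A_p}^{1/(p-1)}$. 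Your route is a legitimate proof, and the exponent arithmetic $\tfrac1p+\tfrac1{p(p-1)}=\tfrac1{p-1}$ is correct, but it takes the sharp openness of $A_p$ as a nontrivial external input where Lerner's argument is self-contained and closer in spirit to the sparse-cube machinery used throughout this paper.

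Two inaccuracies in your sketch are worth flagging. The openness parameter should be $\varepsilon\sim c_n(p-1)[w]_{A_p}^{-1/(p-1)}$ (the factor $p-1$, which you drop, comes from converting the reverse H\"older gain $\delta\sim 1/[\sigma]_{A_\infty}$ on $\sigma=w^{1-p'}$ into a decrement of the $A_p$ index), and the resulting comparison is $[w]_{A_{p-\varepsilon}}\le 2^{p-1}[w]_{A_p}$ rather than $2[w]_{A_p}$; neither change affects the final power of $[w]_{A_p}$. More seriously, your assertion that the bookkeeping ``produces the exact prefactor $C_n(p')^{1/p}p^{1/p'}$'' is stated rather than derived, and if one actually tracks the Marcinkiewicz constant $\bigl(p/\varepsilon\bigr)^{1/p}$ together with $[w]_{A_{p-\varepsilon}}^{1/p}$ one obtains a prefactor of order $(p')^{1/p}$, which is smaller; so the theorem is certainly implied, but you should not present the displayed constant as forced by the argument. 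Finally, the pointwise inequality $(M^df)^p\le[w]_{A_p}\,M^d_w(|f|^p)$ you write down plays no role in your weak-type step --- it is the cube-by-cube H\"older estimate, not its pointwise supremum, that gives the distributional bound --- so it can simply be omitted.
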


The next result is the sharp version of the Rubio de Francia
extrapolation theorem due to  Dragi{\v{c}}evi{\'c} {\em et
al.}~\cite{MR2140200}.

\begin{theorem} \label{thm:sharp-extrapol}
Suppose that for some $p_0$, $1<p_0<\infty$, there exists
$\alpha(p_0)>0$ such that for every $w\in A_{p_0}$, a sublinear
operator $T$ satisfies
\[ \|Tf\|_{L^{p_0}(w)} \leq C_{n,T,p_0}[w]_{A_{p_0}}^{\alpha(p_0)}\|f\|_{L^{p_0}(w)}. \]
Then for every $p$, $1<p<\infty$,
\[ \|Tf\|_{L^{p}(w)} \leq C_{n,T,p_0,p}[w]_{A_{p}}^{\alpha(p_0)\,\max\left(1,\frac{p_0-1}{p-1}\right)}\|f\|_{L^{p}(w)}. \]
\end{theorem}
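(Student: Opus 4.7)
The plan is to adapt the classical Rubio de Francia extrapolation argument, paying careful attention to the dependence of the constants on $[w]_{A_p}$ via the sharp maximal inequality (Theorem~\ref{thm:sharpmax}). The argument splits into two cases corresponding to the two branches of the $\max$ in the exponent: $p > p_0$, where the target exponent is $\alpha(p_0)$, and $p < p_0$, where the target exponent is $\alpha(p_0)(p_0-1)/(p-1)$. The case $p = p_0$ is the hypothesis.

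For the case $p > p_0$, I would proceed via duality: writing
\[
\|Tf\|_{L^p(w)}^{p_0} = \bigl\|(Tf)^{p_0}\bigr\|_{L^{p/p_0}(w)} = \sup_{h}\int (Tf)^{p_0}\,h\,w\,dx,
\]
where the supremum is taken over non-negative $h$ with $\|h\|_{L^{(p/p_0)'}(w)}\le 1$. For each such $h$, I would apply the Rubio de Francia iteration algorithm with respect to the weighted maximal operator $M_w f(x)=\sup_{Q\ni x} w(Q)^{-1}\int_Q |f|\,w$, which is bounded on $L^{(p/p_0)'}(w)$ with a universal constant (since $w\,dx$ is a doubling measure). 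This produces $Rh\ge h$ with $\|Rh\|_{L^{(p/p_0)'}(w)}\le 2\|h\|_{L^{(p/p_0)'}(w)}$ and an $A_1$-type property $M_w(Rh)\le K\,Rh$. A direct computation then shows $W:=Rh\cdot w\in A_{p_0}$ with $[W]_{A_{p_0}}\le C[w]_{A_p}$: the $A_1$-condition for $Rh$ w.r.t.\ $w$ cancels the $(Rh)^{1-p_0'}$ factor in the $A_{p_0}$ quotient up to a constant, leaving a quantity controlled by $[w]_{A_p}$. Applying the hypothesis at $p_0$ with weight $W$ yields the target bound after a final Hölder estimate using $\|Rh\|_{L^{(p/p_0)'}(w)}\lesssim 1$.

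For the case $p < p_0$, I would apply Hölder with exponent $p_0/p>1$:
\[
\int |Tf|^p\,w\,dx \le \left(\int |Tf|^{p_0}\,W\,dx\right)^{p/p_0} \left(\int W^{-p/(p_0-p)}\,w^{p_0/(p_0-p)}\,dx\right)^{(p_0-p)/p_0},
\]
for a weight $W\in A_{p_0}$ to be constructed. Since $\sigma=w^{1-p'}\in A_{p'}$ with $[\sigma]_{A_{p'}}=[w]_{A_p}^{1/(p-1)}$, I would run the Rubio de Francia iteration on $L^s(\sigma)$ (for appropriate $s$) using Theorem~\ref{thm:sharpmax} to bound the iteration constant by a power of $[\sigma]_{A_{p'}}$. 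The weight $W$ is then defined via Jones-type factorization from the $A_1$-output of the iteration, in such a way that $[W]_{A_{p_0}}\lesssim [\sigma]_{A_{p'}}^{p_0-1}=[w]_{A_p}^{(p_0-1)/(p-1)}$, and the second Hölder factor is absorbed into $\|f\|_{L^p(w)}^{p_0-p}$ by a judicious choice of the starting function. Applying the hypothesis to the first factor contributes $\alpha(p_0)$ as a power of $[W]_{A_{p_0}}$, so the overall exponent becomes $\alpha(p_0)\cdot(p_0-1)/(p-1)$, as required.

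The principal obstacle is the construction of the weight $W$ with the correct $A_{p_0}$ constant in each case. In particular, controlling $[W]_{A_{p_0}}$ by a power of $[w]_{A_p}$ requires exploiting the $A_1$-type bound from the Rubio de Francia iteration (whose norm is given by Theorem~\ref{thm:sharpmax}), combined with the exact algebraic identity $(1-p_0')(p_0-1)=-1$ that causes the Rubio de Francia factor to telescope out of the $A_{p_0}$ quotient. Tracking the sharp dependence on $[w]_{A_p}$ through this telescoping—and through the Hölder step that relates the $(p-1)$-moment of $w^{1-p'}$ to the $(p_0-1)$-moment of $w^{1-p_0'}$—is exactly what produces the sharp exponent $\alpha(p_0)\max\bigl(1,(p_0-1)/(p-1)\bigr)$.
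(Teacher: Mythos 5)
The paper does not prove this theorem; it cites Dragi\v{c}evi\'c, Grafakos, Pereyra, and Petermichl~\cite{MR2140200}, so there is no ``paper's own proof'' to compare against, and your sketch must be judged on its merits. While the high-level architecture (duality for $p>p_0$, H\"older plus a constructed weight for $p<p_0$, Rubio de Francia iteration to produce $A_1$-type weights, and Buckley's Theorem~\ref{thm:sharpmax} to track constants) is the right framework for sharp extrapolation, there is a genuine flaw in the $p>p_0$ branch, which is the one you treat in the most detail.

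Your construction sets $W = Rh\cdot w$, where $Rh$ satisfies the $A_1$-type bound $M_w(Rh)\le K\,Rh$, i.e.\ $Rh$ is an $A_1$ weight \emph{with respect to the measure $w\,dx$}. The ``telescoping'' you describe does happen, but it does not leave $[w]_{A_p}$. Indeed, from $\avgint_Q Rh\,w\,dx \le K\,\frac{w(Q)}{|Q|}\,\inf_Q Rh$ and $(Rh)^{1-p_0'}\le(\inf_Q Rh)^{1-p_0'}$ one gets
\[
\Big(\avgint_Q Rh\,w\Big)\Big(\avgint_Q (Rh\,w)^{1-p_0'}\Big)^{p_0-1}
\le K\,\frac{w(Q)}{|Q|}\Big(\avgint_Q w^{1-p_0'}\Big)^{p_0-1},
\]
since $(1-p_0')(p_0-1)=-1$ cancels the $\inf_Q Rh$ factors. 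The right-hand side is $K\,[w]_{A_{p_0}}$, not $K\,[w]_{A_p}$. When $p>p_0$ the inclusion goes the wrong way ($A_{p_0}\subsetneq A_p$), so $[w]_{A_{p_0}}$ is not controlled by $[w]_{A_p}$ and may well be infinite---e.g.\ $w(x)=|x|^a$ with $n(p_0-1)\le a<n(p-1)$ lies in $A_p$ but not in $A_{p_0}$. Taking $h$ close to a constant on a ball makes $Rh$ essentially constant there, so $W\approx cw$ locally and $[W]_{A_{p_0}}$ really does blow up. Thus the central claim ``$[W]_{A_{p_0}}\le C[w]_{A_p}$'' fails, and the hypothesis at $p_0$ cannot be applied to $W$. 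This is not a minor imprecision: it is precisely the point at which sharp extrapolation is delicate. The known proofs (\cite{MR2140200}, and Duoandikoetxea's later streamlined treatment) do not feed $Rh\cdot w$ into the hypothesis; instead they build $W$ from a Jones-type factorization $W=W_1W_2^{1-p_0}$ with \emph{two} $A_1$ factors produced by Rubio de Francia iterations with respect to the ordinary maximal operator, one run from the dual function $h$ and one run from a function built out of $f$ and $w$ itself. It is the interaction of these two factors---not a single $w$-weighted $A_1$ bound---that yields the exponent $\alpha(p_0)$.

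The $p<p_0$ branch is too vague to evaluate: you do not specify the exponent $s$, the starting function for the iteration, the precise form of the factorization, or how the second H\"older factor is absorbed into $\|f\|_{L^p(w)}$. Given that the analogous ``direct computation'' in the other case is incorrect, these steps would need to be written out before one could trust the claimed exponent $\alpha(p_0)(p_0-1)/(p-1)$.
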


Third, we need a norm inequality for a weighted dyadic maximal
operator.

\begin{lemma} \label{lemma:wtdmax}
Let $\sigma$ be a locally integrable function such that
$\sigma>0$ a.e., and define the weighted dyadic maximal operator
\[ M_\sigma^df(x) = \sup_{\substack{Q\in \Delta \\ x\in Q}}
\frac{1}{\sigma(Q)} \int_Q |f(y)|\sigma(y)\,dy. \]
Then for all $p$, $1<p<\infty$,
\[ \|M_\sigma^d f\|_{L^p(\sigma)} \leq p'\|f\|_{L^p(\sigma)}. \]
In particular, the constant is independent of $\sigma$.
\end{lemma}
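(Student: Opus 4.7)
The plan is to derive the sharp bound via two ingredients: the weak $(1,1)$ endpoint for $M_\sigma^d$ with respect to the measure $\sigma\,dx$ (with constant $1$, uniformly in $\sigma$), followed by a distribution-function argument that recovers the sharp constant $p'$ through H\"older's inequality rather than a crude Marcinkiewicz interpolation between the weak $(1,1)$ and trivial $L^\infty$ endpoints.

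The first step is the sharpened weak-type estimate
\[
\sigma\bigl(\{M_\sigma^d f > \lambda\}\bigr) \leq \frac{1}{\lambda}\int_{\{M_\sigma^d f > \lambda\}} |f|\,\sigma\,dy, \qquad \lambda > 0.
\]
For fixed $\lambda$, I would carry out the standard dyadic Calder\'on--Zygmund stopping-time selection: pick the collection $\{Q_j\}$ of \emph{maximal} dyadic cubes $Q \in \Delta$ satisfying $\sigma(Q)^{-1}\int_Q |f|\,\sigma\,dy > \lambda$. Since $\sigma$ is locally integrable and positive a.e.\ and the standard dyadic grid contains cubes of arbitrarily large side-length, for $f \in L^p(\sigma)$ the $\sigma$-averages of $|f|$ over sufficiently large dyadic cubes are small, so such maximal cubes exist. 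These $Q_j$ are pairwise disjoint by maximality, their union equals $\{M_\sigma^d f > \lambda\}$, and $\sigma(Q_j) < \lambda^{-1}\int_{Q_j}|f|\,\sigma\,dy$ holds by the defining inequality; summing over $j$ gives the claim.

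The second step is the layer-cake formula, combined with Fubini's theorem and H\"older's inequality. Writing
\[
\|M_\sigma^d f\|_{L^p(\sigma)}^p = p \int_0^\infty \lambda^{p-1}\,\sigma\bigl(\{M_\sigma^d f > \lambda\}\bigr)\,d\lambda,
\]
inserting the sharpened weak-type bound, and swapping the order of integration yields
\[
\|M_\sigma^d f\|_{L^p(\sigma)}^p \leq p \int |f(y)| \int_0^{M_\sigma^d f(y)} \lambda^{p-2}\,d\lambda\,\sigma(y)\,dy = \frac{p}{p-1} \int |f|\,(M_\sigma^d f)^{p-1}\,\sigma\,dy.
\]
H\"older's inequality with exponents $p$ and $p'$ bounds the last integral by $\|f\|_{L^p(\sigma)}\,\|M_\sigma^d f\|_{L^p(\sigma)}^{p-1}$ (using $(p-1)p'=p$), and dividing through by $\|M_\sigma^d f\|_{L^p(\sigma)}^{p-1}$ yields the desired inequality with the constant $p'=\frac{p}{p-1}$, visibly independent of $\sigma$.

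The only subtlety is justifying the final division, which requires $\|M_\sigma^d f\|_{L^p(\sigma)}$ to be finite a priori. I would handle this in the standard way: first prove the inequality for bounded $f$ supported on a large dyadic cube $Q_0$, restricting the supremum defining $M_\sigma^d$ to cubes $Q \subseteq Q_0$ (so that $M_\sigma^d f$ is bounded and supported on $Q_0$, hence lies in $L^p(\sigma)$), and then remove both truncations via monotone convergence, using that the constant $p'$ does not depend on them. No step is a serious obstacle; the whole point of the H\"older-divide argument is precisely that it yields the \emph{sharp} constant $p'$, which would be lost by a direct interpolation.
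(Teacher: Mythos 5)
Your proof is correct and takes a more self-contained route than the paper, which simply observes the $L^\infty(\sigma)$ bound and the weak $(1,1)$ bound (both with constant $1$) and cites a Marcinkiewicz-type interpolation (Grafakos, Exercise 1.3.3) to extract the constant $p'$. You instead prove the \emph{sharpened} weak-type estimate, with the integral of $|f|\sigma$ restricted to the superlevel set, via the dyadic stopping-time selection, and then run the layer-cake/Fubini/H\"older/divide argument; this directly yields $p'$ without invoking a black-box interpolation theorem. The two approaches are morally equivalent (the sharpened weak-type implicitly encodes the $L^\infty$ endpoint), but yours is explicit, which is a mild improvement in exposition. One small caveat: the heuristic you give for the existence of maximal stopping cubes---that $\sigma$-averages of $|f|$ over large dyadic cubes are small because $f\in L^p(\sigma)$---is not valid in general when $\sigma(\mathbb{R}^n)<\infty$ (e.g.\ $f\equiv 1$), but this is harmless because the truncation to cubes $Q\subseteq Q_0$ in your final step removes the issue entirely; you may want to make the truncation the primary justification for the existence of stopping cubes rather than a footnote to the finiteness of $\|M_\sigma^d f\|_{L^p(\sigma)}$.
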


This result is well known and follows by standard arguments.
Clearly,  $M_\sigma^d$ is bounded on $L^\infty(\sigma)$ with
constant $1$.   It is also of weak-type $(1,1)$ (with respect to
$\sigma$) with constant $1$; this follows from the dyadic
structure. Then by Marcinkiewicz interpolation  we get the desired
estimate (see \cite[Chapter 1, Exercise 1.3.3]{MR2445437}).

In the two-weight case we will need a norm inequality for Orlicz
maximal operators.  This result was proved in~\cite{perez95}.

\begin{theorem} \label{thm:orlicz-max}
Given $p$, $1<p<\infty$, suppose that $A$ is a Young function such
that $A\in B_p$.   Then the Orlicz maximal operator
\[ M_A f(x) = \sup_{x\ni Q} \|f\|_{A,Q} \]
is bounded on $L^p(\re^n)$.
\end{theorem}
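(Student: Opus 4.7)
The plan is to reduce to a dyadic setting and then run a Calder\'on--Zygmund stopping-time argument, with the $B_p$ integrability of $A$ entering at the final summation. First, by the standard one-third trick---every cube in $\re^n$ is contained, up to a bounded dilation, in a dyadic cube drawn from one of $3^n$ translated dyadic grids---it suffices to prove the $L^p$ bound for the dyadic Orlicz maximal function $M_A^\Delta f(x) = \sup\{\|f\|_{A,Q} : Q \in \Delta, Q \ni x\}$ associated to a fixed dyadic grid $\Delta$. Fix a parameter $a > 1$ and form the stopping family $\{Q_j^k\}_{k,j}$, where the $Q_j^k$ are the maximal dyadic cubes with $\|f\|_{A,Q} > a^k$; the level set $\Omega_k := \{M_A^\Delta f > a^k\}$ is then the disjoint union $\bigsqcup_j Q_j^k$. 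From the very definition of the Luxemburg norm, $\avgint_{Q_j^k} A(|f|/a^k) > 1$, giving the key estimate $|Q_j^k| \le \int_{Q_j^k} A(|f(x)|/a^k)\,dx$.

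Combining this bound with the layer-cake identity and interchanging summation and integration reduces the problem to the pointwise estimate
\[
S(x) := \sum_{k\,:\, a^k < M_A^\Delta f(x)} a^{kp}\, A(|f(x)|/a^k) \le C_{A,p}\bigl(|f(x)|^p + |f(x)|\,M_A^\Delta f(x)^{p-1}\bigr).
\]
The substitution $t = |f(x)|/a^k$ turns $S(x)$ into a Riemann sum for $|f(x)|^p \int_{|f(x)|/M_A^\Delta f(x)}^{\infty} A(t)\,t^{-p-1}\,dt$, whose convergence at infinity is precisely the $B_p$ condition $\int^\infty A(t)\,t^{-p-1}\,dt < \infty$. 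The divergence of the integrand at $t = 0$ is controlled using the convex estimate $A(t) \le A(1)\,t$ for $t \le 1$, which produces the cross term $|f|\,M_A^\Delta f^{p-1}$. Integrating the pointwise bound, applying H\"older's inequality to the cross term, and then using Young's inequality to absorb the resulting factor $\|M_A^\Delta f\|_{L^p}^{p-1}$ into the left-hand side yields the desired $\|M_A^\Delta f\|_{L^p} \le C\|f\|_{L^p}$.

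The main technical obstacle is precisely this last absorption step: it requires a priori finiteness of $\|M_A^\Delta f\|_{L^p}$, which is established by first truncating $f$ to a bounded, compactly supported function and then passing to the limit. The $B_p$ condition on $A$ is used only at the single integral computation above, and any $A \notin B_p$ would produce a divergent constant---consistent with the known sharpness of this hypothesis within the scale of Orlicz bumps.
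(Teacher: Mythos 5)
The paper does not prove this theorem; it cites P\'erez~\cite{perez95} (see also~\cite{cruz-martell-perezBook}), where the argument is precisely the dyadic stopping-time proof you describe: reduce to shifted dyadic grids, form the maximal cubes $Q_j^k$ with $\|f\|_{A,Q_j^k}>a^k$, use $|Q_j^k|\le\int_{Q_j^k}A(|f|/a^k)\,dx$, interchange the sum and the integral, and bound the resulting geometric sum by the $B_p$ integral $\int^\infty A(t)t^{-p-1}\,dt$, handling the small-$t$ range via $A(t)\le A(1)t$. Your proposal is correct and coincides with the cited proof, including the final H\"older-plus-absorption step and the truncation needed to make $\|M_A^\Delta f\|_{L^p}$ finite a priori.
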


We will also need a two-weight norm inequality for the maximal
operator, also from~\cite{perez95}.

\begin{theorem} \label{thm:twowt-max}
Given $p$, $1<p<\infty$, suppose that $B$ is a Young function such
that $\bar{B}\in B_p$.  Then for any pair $(u,v)$ that satisfies
\[ \sup_Q \|u^{1/p}\|_{p,Q}\|v^{-1/p}\|_{B,Q} < \infty, \]
we have that
\[ \|Mf \|_{L^p(u)} \leq C\|f\|_{L^p(v)}. \]
\end{theorem}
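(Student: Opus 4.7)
The plan is to reduce to a dyadic maximal operator, apply a Calderón-Zygmund/principal-cubes stopping-time decomposition, and use Hölder's inequality in Orlicz form to replace the averages of $f$ by a local Orlicz norm which we then control by Theorem~\ref{thm:orlicz-max}.

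First, I would use the standard observation that $M \le C_n \sum_{k} M^{d_k}$, where $M^{d_k}$ is the dyadic maximal operator with respect to each of the $3^n$ translated dyadic grids $\Delta^k$; so it suffices to prove the inequality with $M$ replaced by $M^d$ for a single grid $\Delta$. Fix a non-negative $f\in L^p(v)$ and, by truncation, assume $f$ is bounded with compact support. Construct the usual sparse (principal) family $\mathcal{F}\subset\Delta$ by stopping times on the averages $\avgint_Q f\,dx$: take the disjoint maximal cubes where this average exceeds $1$, then within each such cube $Q$ take the maximal sub-cubes on which the average exceeds $2\avgint_Q f$, and iterate. If for $Q\in\mathcal{F}$ one sets $E(Q)=Q\setminus\bigcup\{Q'\in\mathcal{F}:Q'\subsetneq Q\text{ child of }Q\}$, then the sets $E(Q)$ are pairwise disjoint and $|E(Q)|\ge \tfrac12|Q|$, and a level-set argument gives
\[
\int_{\re^n}(M^df)^p\,u\,dx \;\le\; C_p\sum_{Q\in\mathcal{F}}\Bigl(\avgint_Q f\,dx\Bigr)^{\!p} u(Q).
\]

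Next, write $f=(fv^{1/p})\cdot v^{-1/p}$ and apply the generalized Hölder inequality for the complementary pair $(\bar B,B)$:
\[
\avgint_Q f\,dx \;\le\; 2\,\|fv^{1/p}\|_{\bar B,Q}\,\|v^{-1/p}\|_{B,Q}.
\]
Using $u(Q)=|Q|\,\|u^{1/p}\|_{p,Q}^{p}$ together with the two-weight bump hypothesis, each summand is controlled by $C\,\|fv^{1/p}\|_{\bar B,Q}^{p}\,|Q|$, and then by $C\,\|fv^{1/p}\|_{\bar B,Q}^{p}\,|E(Q)|$. Since $\|fv^{1/p}\|_{\bar B,Q}\le M_{\bar B}(fv^{1/p})(x)$ for every $x\in Q$, and the $E(Q)$ are disjoint, summing yields
\[
\int_{\re^n}(M^df)^p\,u\,dx \;\le\; C\!\int_{\re^n}\!\bigl(M_{\bar B}(fv^{1/p})(x)\bigr)^{p}dx.
\]

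Finally, apply Theorem~\ref{thm:orlicz-max} to the Young function $\bar B$; the hypothesis $\bar B\in B_p$ is exactly what is needed to conclude that $M_{\bar B}$ is bounded on $L^p(\re^n)$. Hence the last integral is dominated by $C\int |f|^p v\,dx$, which completes the argument after summing the contributions from the $3^n$ shifted grids. The only nonroutine step is the passage from a geometric bump condition to an $L^p$ bound via the stopping-time/Orlicz-Hölder mechanism; once this linearization is in place the rest is bookkeeping.
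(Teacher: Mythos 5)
The paper does not prove Theorem~\ref{thm:twowt-max} at all: it cites \cite{perez95} and moves on, so there is no internal argument to compare against. Your reconstruction is correct and is in substance the argument from that reference. The ingredients you assemble --- reduction to the shifted dyadic grids, a Calder\'on--Zygmund stopping-time family $\mathcal{F}$ with $|E(Q)|\ge\tfrac12|Q|$, the identity $u(Q)=|Q|\,\|u^{1/p}\|_{p,Q}^p$, the generalized H\"older inequality for the complementary pair $(\bar B,B)$, the bump hypothesis, sparsity $|Q|\le 2|E(Q)|$, and finally Theorem~\ref{thm:orlicz-max} (which is precisely why the hypothesis $\bar B\in B_p$ is present) --- are exactly the right ones, and in fact this is the same Orlicz--H\"older/stopping-cube mechanism the paper itself uses in the two-weight proofs for Haar shifts (Theorem~\ref{thm:haarshift-twowt}), where Lerner's decomposition plays the role your stopping times play. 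Two small points to tighten in a full write-up: (i) in the level-set reduction $\int (M^d f)^p u \le C_p\sum_{Q\in\mathcal F}(\avgint_Q f)^p u(Q)$ the first-generation threshold should be a parameter $\epsilon\to 0$ (or one should work on a fixed large dyadic cube and let it grow) rather than the fixed value $1$, so that $\bigcup E(Q)$ exhausts $\{M^d f>0\}$ up to a null set and no uncontrolled piece of the left side is dropped; and (ii) one should record the elementary check that $M^d f(x)\le 2\avgint_Q f$ for $x\in E(Q)$, which requires examining both $Q''\subseteq Q$ and $Q''\supsetneq Q$ and uses the maximality of the stopping cubes at each generation. With these bookkeeping remarks in place the argument is complete.
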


To apply Theorem~\ref{thm:twowt-max} we will need to use two facts
about Young functions and Orlicz norms.  First,  if $A$ is a Young
function such that $\bar{A} \in B_{p'}$, then $\bar{A}(t) \le C\,
t^{p'}$ for $t\ge 1$, so that $t^p\le A(C\,t)$ for $t\ge 1$, and
therefore, $\|u^{1/p}\|_{p,Q}\leq C\|u^{1/p}\|_{A,Q}$.   Second,
given a Young function $A$ we have the generalized H\"older's
inequality,
\[ \avgint_Q |f(x)g(x)|\,dx \leq 2 \|f\|_{A,Q}\|g\|_{\bar{A},Q}. \]
See~\cite{cruz-martell-perezBook} for more details.

\section{Local mean oscillation}
\label{section:lerner}

To state Lerner's decomposition argument we must first make some
definitions and give a few basic results.    We follow the
terminology and notation in~\cite{lernerP2009}, which in turn is
based on Fujii~\cite{MR946637,MR1115188} and Jawerth and
Torchinsky~\cite{MR779906}.  We note in passing that many of the
underlying ideas originated in the work of Carleson~\cite{MR0477058}
and Garnett and Jones~\cite{MR658065}.

Hereafter we assume that all functions $f$ are measurable and
finite-valued almost everywhere.  Given a cube $Q$ and $\lambda$,
$0<\lambda<1$, define the local mean oscillation of $f$ on $Q$ by
\[ \omega_\lambda(f,Q) = \inf_{c\in \re }
\big((f-c)\chi_Q\big)^*(\lambda |Q|), \]
where $f^*$ is the (left-continuous) non-increasing rearrangement of $f$:
\[ f^*(t) = \inf\big\{ \alpha> 0 : |\{ x \in \re^n : |f(x)|>\alpha\}| <
t \big\}. \]
The local
sharp maximal function of $f$ relative to $Q$ is then defined by
\[ M^\#_{\lambda,Q} f(x) = \sup_{\substack{Q'\ni x\\Q'\subset Q}}
\omega_\lambda(f,Q).  \]
The local sharp maximal function is significantly smaller than the
C.~Fefferman-Stein sharp maximal function: for all
$\lambda>0$ sufficiently small, $M(M^\#_{\lambda,Q}f)(x) \leq
C(n,\lambda)M^\#f(x)$. (See~\cite{MR779906}.)

A median value of $f$ on $Q$ is a (possibly not unique) number
$m_f(Q)$ such that
\[
\max\big(| \{ x\in Q : f(x) > m_f(Q) \}|,  | \{ x\in Q : f(x) <
m_f(Q) \}|\big) \leq \frac{|Q|}{2}.
\]
(A different but functionally equivalent definition is given
in~\cite{MR1115188}.)  The median plays the same role for the local
sharp maximal function as the mean does for the C.~Fefferman-Stein
sharp maximal function.  More precisely, for each $\lambda$,
$0<\lambda \leq 1/2$,
\[ \omega_\lambda(f,Q) \leq  \big((f-m_f(Q))\chi_Q\big)^*(\lambda
|Q|) \leq 2\omega_\lambda(f,Q).\]
(The first inequality is immediate; the second follows
from~\eqref{eqn:median-f*} below and the fact that for any constant
$c$, $m_f(Q)-c=m_{f-c}(Q)$;  see Lerner~\cite{lerner04}.)

To estimate the median and the local mean oscillation we need several
properties.  For the convenience of the reader we gather them as a
lemma and sketch their proofs.

\begin{lemma}
Given a measurable function $f$ and a cube $Q$, then
for all $\lambda$,  $0<\lambda<1$,
and $p$, $0<p<\infty$,
\begin{gather}
(f\chi_{Q })^*(\lambda |Q|)\le \lambda^{-1/p}\,
\|f\|_{L^{p,\infty}(Q,|Q|^{-1}dx)}, \label{eqn:mean-est1} \\
(f\chi_{Q })^*(\lambda |Q|)\le
\left(\frac{1}{\lambda|Q|}\int_Q|f|^p\,dx\right)^{1/p}.
\label{eqn:mean-est2}
\end{gather}
Furthermore,
\begin{equation}
|m_f(Q)| \leq (f\chi_Q)^*(|Q|/2); \label{eqn:median-f*}
\end{equation}
in particular, if $f\in L^p$ for any $p>0$, then
$m_f(Q)\rightarrow 0$ as $|Q|\rightarrow \infty$.
\end{lemma}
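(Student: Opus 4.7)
My plan is to peel off the four assertions one at a time, since each reduces to a direct manipulation of the non-increasing rearrangement and the definition of the median.

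For inequality \eqref{eqn:mean-est1} I would set $\alpha_0=(f\chi_Q)^*(\lambda|Q|)$ and use the defining infimum: for every $0<\alpha<\alpha_0$ the distribution function satisfies $|\{x\in Q:|f(x)|>\alpha\}|\ge \lambda|Q|$. The definition of the weak norm with respect to the probability measure $|Q|^{-1}dx$ then gives
\[
\alpha\,\lambda^{1/p}\;\le\; \|f\|_{L^{p,\infty}(Q,|Q|^{-1}dx)},
\]
and letting $\alpha\nearrow\alpha_0$ yields \eqref{eqn:mean-est1}. Inequality \eqref{eqn:mean-est2} is then immediate from \eqref{eqn:mean-est1} combined with the Chebyshev bound $\|f\|_{L^{p,\infty}(Q,|Q|^{-1}dx)}\le \|f\|_{L^{p}(Q,|Q|^{-1}dx)}$.

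For \eqref{eqn:median-f*} I would first reduce to the case $m_f(Q)\ge 0$ by replacing $f$ with $-f$ if necessary, observing that $m_{-f}(Q)=-m_f(Q)$. Under this assumption the second inequality defining the median forces $|\{x\in Q:f(x)\ge m_f(Q)\}|\ge |Q|/2$. Then for any $0\le\alpha<m_f(Q)$ the inclusion $\{x\in Q:f(x)\ge m_f(Q)\}\subset\{x\in Q:|f(x)|>\alpha\}$ gives $|\{|f\chi_Q|>\alpha\}|\ge|Q|/2$, so by the very definition of $f^*$ we must have $(f\chi_Q)^*(|Q|/2)\ge\alpha$; letting $\alpha\nearrow m_f(Q)$ completes this step.

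For the last assertion, if $f\in L^p(\re^n)$ for some $p>0$ then combining \eqref{eqn:median-f*} with \eqref{eqn:mean-est2} applied with $\lambda=1/2$ produces
\[
|m_f(Q)|\;\le\;(f\chi_Q)^*(|Q|/2)\;\le\;\Bigl(\frac{2}{|Q|}\Bigr)^{1/p}\|f\|_{L^p(\re^n)},
\]
which tends to $0$ as $|Q|\to\infty$. I do not foresee any real obstacle: the only delicate point is tracking strict versus non-strict inequalities in the defining infimum of $f^*$ and in the definition of the median, which is precisely why each of the two main steps is concluded by an approximation $\alpha\nearrow\alpha_0$ (resp.\ $\alpha\nearrow m_f(Q)$) rather than by trying to plug in the extremal value directly.
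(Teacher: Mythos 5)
Your proof is correct and follows essentially the same approach as the paper's: unwind the definitions of the nonincreasing rearrangement and the weak-type norm for \eqref{eqn:mean-est1}, get \eqref{eqn:mean-est2} by Chebyshev, reduce the median bound \eqref{eqn:median-f*} to the case $m_f(Q)\ge 0$ and use the second half of the median condition, and combine with \eqref{eqn:mean-est2} at $\lambda=1/2$ for the limit statement. Your treatment of \eqref{eqn:median-f*} is marginally cleaner than the paper's (which introduces an auxiliary quantity $m_+$ before comparing with the rearrangement), and the one thing worth phrasing carefully is that a median need not be unique, so the reduction step should say ``$-m_f(Q)$ is a median of $-f$'' rather than $m_{-f}(Q)=-m_f(Q)$; this is cosmetic and does not affect the argument.
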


\begin{proof}
Inequality~\eqref{eqn:mean-est2} follows immediately
from~\eqref{eqn:mean-est1}.  To prove this inequality, fix
$\alpha<(f\chi_Q)^*(\lambda|Q|)$.  Then
\[ \lambda^{-1/p}\,\|f\|_{L^{p,\infty}(Q,|Q|^{-1}dx)} \geq
\lambda^{-1/p}|Q|^{-1/p} \alpha |\{ x\in Q : |f(x)|>\alpha \}|^{1/p}
\geq \alpha. \]
Since this is true for all such $\alpha$, \eqref{eqn:mean-est1}
follows at once.

To prove \eqref{eqn:median-f*} we consider two cases.  Suppose
$m_f(Q)\geq 0$.  Define
\[ m_+ = \sup\{ \beta \geq m_f(Q) : |\{x\in Q : f(x)< \beta \}| \leq
|Q|/2\}; \]
then $0 \leq m_f(Q)\leq m_+$ so it will be enough to prove $m_+\leq
f^*(|Q|/2)$.  Take any $\alpha>0$ such that
\[ |\{ x\in Q : |f(x)|>\alpha \}| < |Q|/2. \]
Then
\[ |\{ x\in Q : f(x) \leq \alpha \}| \geq |\{ x \in  Q : |f(x)|\leq
\alpha \}| > |Q|/2. \]
Hence, for any $\beta>\alpha$,
\[ |\{ x\in Q : f(x) < \beta \}| \geq  |\{ x\in Q : f(x) \leq \alpha
\}| > |Q|/2, \]
and so $\beta \geq m_+$.   Since this is true for all $\beta>\alpha$,
we have that $\alpha\geq m_+$, and taking the infimum of all such
$\alpha$ we get that $(f\chi_Q)^*(|Q|/2)\geq m_+$.

Finally if $m_f(Q)<0$, define $g(x)=-f(x)$.  Then $-m_f(Q)$ is a median
of $g$ and the previous case yields $|m_f(Q)|=-m_f(Q) \leq g^*(|Q|/2) = f^*(|Q|/2)$.
\end{proof}

\begin{remark}
Inequality \eqref{eqn:mean-est1} is central to our proofs as  it
allows us to use  weak $(1,1)$ inequalities directly in our
estimates.  By way of comparison, in~\cite{MR2351373} a key
technical difficulty resulted from having to use Kolmogorov's
inequality rather than the weak $(1,1)$ inequality for a singular
integral. Overcoming this is the reason the results there were
limited to log bumps.
\end{remark}

\bigskip

To state Lerner's decomposition theorem, we generalize our notation
slightly:  given a cube $Q_0$, let $\Delta(Q_0)$ be the collection
of dyadic cubes relative to $Q_0$. Given $Q\in \Delta(Q_0)$,  $Q\neq
Q_0$, let $\widehat{Q}$ be its dyadic parent:  the unique cube in
$\Delta(Q_0)$ containing $Q$ whose side-length is twice that of $Q$.

\begin{theorem}  \label{thm:lerner}
\textup{(\cite{lernerP2009})}\; Given a measurable function $f$ and
a cube $Q_0$, for each $k\geq 1$ there exists a (possibly empty)
collection of pairwise disjoint cubes $\{Q_j^k\} \subset
\Delta(Q_0)$ such that if $\Omega_k = \bigcup_j Q_j^k$, then
$\Omega_{k+1}\subset \Omega_k$ and $|\Omega_{k+1}\cap Q_j^k|\leq
\frac{1}{2}|Q_j^k|$.   Furthermore, for almost every $x\in Q_0$,
\[ |f(x)-m_f(Q_0)| \leq 4M^\#_{\frac{1}{4},Q_0}f(x) +
4\sum_{k,j}
\omega_{\frac{1}{2^{n+2}}}(f,\widehat{Q}_j^k)\chi_{Q_j^k}(x). \]
\end{theorem}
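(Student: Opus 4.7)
The plan is to construct the cubes $\{Q_j^k\}_{k\geq 1}$ by iterating a Calderón-Zygmund-type stopping time in which the oscillation of $f$ is measured by the median $m_f$ rather than the mean. Starting from the single cube $Q^0 := Q_0$, and viewing the level $k=0$ as consisting just of $\{Q_0\}$, for each cube $Q_i^k$ at level $k$ I would take $\{Q_j^{k+1}\}_j$ to be the collection of maximal dyadic cubes $Q\in\Delta(Q_i^k)$ with $Q\subsetneq Q_i^k$ satisfying
\[
\bigl|\bigl\{x\in Q: |f(x)-m_f(Q_i^k)|>\omega_{1/2^{n+2}}(f,Q_i^k)\bigr\}\bigr| > \tfrac12|Q|.
\]
This threshold is calibrated so that two things happen simultaneously. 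First, by the very definition of $\omega_\lambda$ at $\lambda=1/2^{n+2}$, the large set appearing inside the measure has total measure at most $2^{-(n+2)}|Q_i^k|$, so summing the selection inequality over the disjoint children of $Q_i^k$ gives $\sum_j |Q_j^{k+1}|\leq 2^{-(n+1)}|Q_i^k|$, which is stronger than the required half-volume bound $|\Omega_{k+1}\cap Q_i^k|\leq \tfrac12|Q_i^k|$. Second, because $Q_j^{k+1}$ is maximal, its dyadic parent $\widehat{Q}_j^{k+1}\subseteq Q_i^k$ fails the selection criterion; by~\eqref{eqn:median-f*} applied to $f-m_f(Q_i^k)$ on $\widehat{Q}_j^{k+1}$ this converts, after some bookkeeping, into the key transfer bound $|m_f(Q_j^{k+1})-m_f(Q_i^k)|\leq 2\,\omega_{1/2^{n+2}}(f,\widehat{Q}_j^{k+1})$.

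Given the family, the pointwise estimate follows by a telescoping argument. Iterating the half-volume property gives $|\Omega_{k+1}|\leq \tfrac12|\Omega_k|$, hence $|\bigcap_k \Omega_k|=0$, so for a.e.\ $x\in Q_0$ there is a largest $k^*=k^*(x)\geq 0$ with $x\in\Omega_{k^*}$; write $Q_{j(k)}^k$ for the unique level-$k$ cube containing $x$ for $0\leq k\leq k^*$. Then
\[
f(x)-m_f(Q_0) = \bigl(f(x)-m_f(Q_{j(k^*)}^{k^*})\bigr) + \sum_{k=0}^{k^*-1}\bigl(m_f(Q_{j(k+1)}^{k+1})-m_f(Q_{j(k)}^k)\bigr),
\]
and each telescoping increment is bounded by $2\,\omega_{1/2^{n+2}}(f,\widehat{Q}_{j(k+1)}^{k+1})$ by the transfer bound from the stopping-time construction, so summing over $x$'s chain reassembles the series $\sum_{k,j}\omega_{1/2^{n+2}}(f,\widehat{Q}_j^k)\chi_{Q_j^k}(x)$ on the right of the theorem. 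For the leading residual term, one observes that $x$ lies in cubes of $\Delta(Q_{j(k^*)}^{k^*})$ where the selection criterion systematically fails, and by the Lebesgue differentiation theorem applied to $f$ at a.e.\ such $x$ together with~\eqref{eqn:median-f*}, one obtains $|f(x)-m_f(Q_{j(k^*)}^{k^*})|\leq 2\,\omega_{1/4}(f,Q_{j(k^*)}^{k^*})\leq 2\,M^\#_{1/4,Q_0}f(x)$; combining constants produces the factor $4$ as stated.

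The main obstacle will be the careful calibration of the three parameters $1/2^{n+2}$, $1/4$, and the overall factor $4$: the same stopping criterion must simultaneously (i) yield the geometric $\tfrac12$-shrinking of $\Omega_k$, (ii) produce the matching oscillation bound at the scale $\widehat{Q}_j^k$ rather than $Q_j^k$ (explaining why the parent appears), and (iii) leave a residual that is exactly dominated by the local sharp maximal function at parameter $1/4$. A secondary technical issue is the passage from $\omega$-type inequalities on subcubes (which control rearrangements, not pointwise values) to a genuine pointwise a.e.\ estimate on $f$; this requires invoking Lebesgue differentiation at the bottom of the refinement process and verifying that the "infinitely refined" set $\bigcap_k \Omega_k$ is indeed null, both of which rest on the fact that $f$ is finite a.e.\ and on the disjointness of $\{Q_j^k\}_j$ at each level.
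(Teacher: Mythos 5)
The paper does not actually prove Theorem~\ref{thm:lerner}; it cites it from Lerner's paper, so there is no ``paper proof'' to compare against. Evaluating your proposal on its own merits: the overall scaffolding (a median-based Calder\'on--Zygmund stopping time inside each selected cube, followed by a telescoping of medians) is indeed the right flavor of argument, and it matches the structure of Lerner's proof in spirit. However, the specific stopping criterion you chose does not deliver the estimates you claim for it, and the gaps are not merely calibration; they are structural.

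\textbf{The measure count needs the threshold $2\omega_\lambda$, not $\omega_\lambda$.} Your first claim is that $\bigl|\{x\in Q_i^k: |f(x)-m_f(Q_i^k)|>\omega_{1/2^{n+2}}(f,Q_i^k)\}\bigr|\le 2^{-(n+2)}|Q_i^k|$. But $\omega_\lambda$ is defined via an infimum over all constants $c$, not the median. What you actually have, via the inequality $\bigl((f-m_f(Q))\chi_Q\bigr)^*(\lambda|Q|)\le 2\omega_\lambda(f,Q)$ noted in the paper before \eqref{eqn:median-f*}, is that the set where $|f-m_f(Q_i^k)|>2\omega_{1/2^{n+2}}(f,Q_i^k)$ has small measure. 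This is fixable by inserting the factor $2$, but it is a real error as written.

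\textbf{The transfer bound does not follow.} This is the main gap. Your criterion selects, inside $Q_i^k$, the maximal dyadic $Q$ on which $\{|f-m_f(Q_i^k)|>\omega_{1/2^{n+2}}(f,Q_i^k)\}$ has density $>1/2$. The failure of this criterion on $\widehat{Q}_j^{k+1}$ tells you, via \eqref{eqn:median-f*} applied to $f-m_f(Q_i^k)$, that $\bigl|m_f(\widehat{Q}_j^{k+1})-m_f(Q_i^k)\bigr|\le 2\omega_{1/2^{n+2}}(f,Q_i^k)$. Note the oscillation on the right is measured on $Q_i^k$, the ancestor one generation of stopping back, \emph{not} on $\widehat{Q}_j^{k+1}$. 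No amount of bookkeeping converts $\omega_{1/2^{n+2}}(f,Q_i^k)$ into $\omega_{1/2^{n+2}}(f,\widehat{Q}_j^{k+1})$: the only restriction inequality available is $\omega_\lambda(f,Q')\le\omega_{\lambda|Q'|/|Q|}(f,Q)$ for $Q'\subset Q$, and this goes in the wrong direction. When you then telescope, you inevitably pick up terms $\omega_{1/2^{n+2}}(f,Q_{j(k)}^k)$ at each level (and, at $k=0$, the term $\omega_{1/2^{n+2}}(f,Q_0)$), which do not appear on the right-hand side of the theorem and cannot be hidden in the sum over parents $\widehat{Q}_j^k$.

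\textbf{The residual bound has a parameter mismatch.} For a.e.\ $x$ in a terminal cube $Q_{j(k^*)}^{k^*}$, Lebesgue differentiation against your criterion gives $|f(x)-m_f(Q_{j(k^*)}^{k^*})|\le 2\omega_{1/2^{n+2}}(f,Q_{j(k^*)}^{k^*})$, not $\le 2\omega_{1/4}(f,Q_{j(k^*)}^{k^*})$. Since $1/2^{n+2}<1/4$ for $n\ge 1$ and $\omega_\lambda$ is non-increasing in $\lambda$, the quantity $\omega_{1/2^{n+2}}(f,\cdot)$ is \emph{larger} than $\omega_{1/4}(f,\cdot)$, so it is not dominated by $M^{\#}_{1/4,Q_0}f(x)$. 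Your residual term is therefore not controlled by the first term on the right-hand side of the theorem.

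In short, the telescoping skeleton is right, but the stopping rule must be designed so that (i) the increments $m_f(Q_j^{k+1})-m_f(Q_j^k)$ are controlled by the oscillation on the \emph{parent} $\widehat{Q}_j^{k+1}$ rather than on the ancestor $Q_j^k$, and (ii) the residual involves $\omega_{1/4}$ so it is dominated by $M^{\#}_{1/4,Q_0}$. The criterion you propose gives neither, and fixing it is not a matter of adjusting constants: a different selection rule is needed, tied to each candidate cube's own parent rather than to the previous-generation stopping cube. You flagged the parameter calibration as the hard part; I would sharpen that diagnosis: the calibration you are hoping for is impossible with this stopping rule, and a genuinely different one is required.
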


\begin{remark} \label{rem:disjoint}
If for all $j$ and $k$ we define $E_j^k=Q_j^k\setminus
\Omega_{k+1}$, then the sets $E_j^k$ are pairwise disjoint and
$|E_j^k|\geq \frac{1}{2}|Q_j^k|$.
\end{remark}

\begin{remark}
 Though it is not explicit in \cite{lernerP2009}, it follows at once
 from the proof that  we can
 replace $M^\#_{\frac{1}{4},Q_0}$ by the corresponding dyadic
 operator $M^{\#,d}_{\frac{1}{4},Q_0}$, where
\[ M^{\#,d}_{\lambda,Q}f(x) = \sup_{x\in Q'\in \Delta(Q)} \omega_\lambda (f,Q'). \]
\end{remark}

Intuitively, one may think of the cubes $\{Q_j^k\}$ as being the
analog of the Calder\'on-Zygmund cubes for the function $f-m_f(Q_0)$
but defined with respect to the median instead of the mean.  The
cubes $Q_j^k$ are maximal dyadic cubes with respect to a dyadic
local sharp maximal operator.   The terms on the right-hand side of
the above inequality then play a role like that of the good and bad
parts of the Calder\'on-Zygmund decomposition.  A key difference, of
course, is that while the Calder\'on-Zygmund decomposition is done at one ``scale,''
the above theorem requires that we estimate the local mean
oscillation of $f$ at all scales.

\section{The Haar shift operators}
\label{section:dyadic}

To prove Theorems~\ref{thm:sio-onewt} and~\ref{thm:sio-twowt} we
need to prove the corresponding inequalities for certain dyadic
operators that can be used to approximate the Hilbert transform, the
Riesz transforms and the Beurling-Ahlfors operator.   We follow the
approach used in~\cite{lacey-petermichl-reguera2010} and
consider simultaneously a family of dyadic operators---the Haar
shift operators---that contains all the operators we are interested
in.

Let $\Delta$ be the set of dyadic cubes in $\re^n$.  For our arguments
we properly need to consider the sets $\Delta_{s,t}$, $s\in \re^n$,
$t>0$, of translations and dilations of dyadic cubes.  However, it will be immediate
that all of our arguments for dyadic cubes extend to these more
general families, so without loss of generality we will restrict
ourselves to dyadic cubes.

We define a Haar function on a cube $Q\in \Delta$ to be a function
$h_Q$ such that
\begin{enumerate}
\item $\supp(h_Q) \subset Q$;
\item if $Q'\in \Delta$ and $Q'\subsetneq Q$, then $h_Q$ is constant on $Q'$;
\item $\|h_Q\|_\infty \leq |Q|^{-1/2}$;
\item $\int_Q h_Q(x)\,dx = 0$.
\end{enumerate}

Given an integer $\tau\ge 0$, a Haar shift operator of index $\tau$
is an operator of the form
\[ H_\tau f(x) = \sum_{Q\in \Delta} \sum_{\substack{Q',Q''\in \Delta(Q)\\ 2^{-\tau n}|Q|\leq |Q'|,|Q''|}} a_{Q',Q''} \la f, h_{Q'} \rangle h_{Q''}(x), \]
where $a_{Q',Q''}$ is a constant such that
\[ |a_{Q',Q''}| \leq C\,\left( \frac{|Q'|}{|Q|}\frac{|Q''|}{|Q|}\right)^{1/2}. \]
We say that $H_\tau$ is a CZ Haar shift operator if it is bounded on
$L^2$.

An important example of a Haar shift operator when $n=1$ is the Haar
shift (also known as the dyadic Hilbert transform)  $H^d$, defined
by
\[ H^df(x) = \sum_{I\in \Delta} \langle f, h_I\rangle \big(
h_{I_-}(x) - h_{I_+}(x)\big), \]
where, as before, given a dyadic interval $I$, $I_+$ and $I_-$ are
its right and left halves, and
\[ h_I(x)  =  |I|^{-1/2}\big(\chi_{I_-}(x) - \chi_{I_+}(x)\big). \]
Clearly $h_I$ is a Haar function on $I$ and one can write $H^d$ as a
Haar shift operator of index $\tau=1$ with $a_{I',I''}=\pm 1$ for
$I'=I$, $I''=I_{\pm}$ and $a_{I',I''}=0$ otherwise. These are the
operators used by Petermichl~\cite{MR1756958,MR2354322} to
approximate the Hilbert transform.  More precisely, she used the
family of operators $H^d_{s,t}$, $s\in \re$, $t>0$, which are
defined as above but with the dyadic grid replaced by its
translation by $s$ and dilation by $t$.    The Hilbert transform is
then the limit of integral averages of these operators, so norm
inequalities for $H$ follow from norm inequalities for $H^d_{s,t}$
by Fatou's lemma and Minkowski's inequality.    Similar
approximations hold for the Riesz transforms and Beurling-Ahlfors
operator, and we refer the reader to \cite{MR2367098,MR1894362} for
more details.

\bigskip

To apply Theorem~\ref{thm:lerner} to the Haar shift operators we
need two lemmas.  The first is simply that CZ Haar shift operators
satisfy a weak $(1,1)$ inequality.  The proof of this is known but
we could not find it in the literature, except when $\tau=0$---in this
case $H_\tau$ is a constant Haar multiplier and the proof is given
in~\cite{MR1864538}.  Therefore, we provide a brief sketch
of the details.  Here and below we will use the following notation:
given an integer $\tau\ge 0$ and a dyadic cube $Q$, let $Q^\tau$
denote its $\tau$-th generation ``ancestor'': that is, the unique
dyadic cube $Q^\tau$ containing $Q$ such that $|Q^\tau|=2^{\tau
n}|Q|$.

\begin{lemma}
Given an integer $\tau\geq 0$,  there exists a constant $C_{\tau,n}$
such that for every $t>0$,
\[|\{ x \in \re^n: |H_\tau f(x)|> t \}| \leq \frac{C_{\tau,n}}{t}
\int_{\re^n} |f(x)|\,dx. \]
\end{lemma}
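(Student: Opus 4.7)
The plan is to prove the weak $(1,1)$ bound by the standard Calder\'on--Zygmund decomposition adapted to the dyadic grid, exploiting one essential feature of Haar shifts of index $\tau$: a mean-zero function supported on a dyadic cube $Q_j$ is mapped by $H_\tau$ into a function supported on $Q_j^\tau$, with no tail outside. Given $f$ and $t>0$, I would form the dyadic CZ decomposition of $|f|$ at height $t$, producing maximal dyadic cubes $\{Q_j\}$ with $t<\avgint_{Q_j}|f|\le 2^n t$, setting $\Omega=\bigcup_j Q_j$, and writing $f=g+\sum_j b_j$ in the usual way, with $b_j=(f-f_{Q_j})\chi_{Q_j}$. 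Standard properties give $\|g\|_\infty\le 2^n t$, $\|g\|_2^2\le 2^n t\|f\|_1$, $\int b_j=0$, $\supp b_j\subseteq Q_j$, and $|\Omega|\le t^{-1}\|f\|_1$.

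For the good part, the hypothesis that $H_\tau$ is bounded on $L^2$ together with Chebyshev's inequality immediately yield $|\{|H_\tau g|>t/2\}|\le Ct^{-1}\|f\|_1$. For the bad part, set $\Omega^*=\bigcup_j Q_j^\tau$; then $|\Omega^*|\le 2^{\tau n}|\Omega|\le 2^{\tau n}t^{-1}\|f\|_1$, and everything reduces to the localization claim that $\supp(H_\tau b_j)\subseteq Q_j^\tau$ for every $j$. To verify this I would examine an individual summand $a_{Q',Q''}\la b_j,h_{Q'}\ra\,h_{Q''}(x)$. Mean-zero cancellation of $b_j$ kills all terms where $Q'\supsetneq Q_j$ (on which $h_{Q'}$ is constant), while disjoint support kills the terms where $Q'\cap Q_j=\emptyset$; hence only $Q'\subseteq Q_j$ can contribute. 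The shift constraints $Q',Q''\in\Delta(Q)$ and $|Q'|,|Q''|\ge 2^{-\tau n}|Q|$ then force $|Q|\le 2^{\tau n}|Q'|\le 2^{\tau n}|Q_j|$, and the dyadic nestedness of $Q$ and $Q_j$ (both contain the nonempty $Q'$) forces $Q\subseteq Q_j^\tau$, so $Q''\subseteq Q\subseteq Q_j^\tau$ and $h_{Q''}(x)=0$ for any $x\notin Q_j^\tau$, as required. Combining the two estimates gives $|\{|H_\tau f|>t\}|\le C_{\tau,n}\,t^{-1}\|f\|_1$ with $C_{\tau,n}\lesssim 2^{\tau n}$.

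The main obstacle is the bad-part localization step: one must carefully marry the Haar mean-zero cancellation with the index-$\tau$ locality of the shift in order to close off all tails exactly at $Q_j^\tau$. Once this is in place, the rest of the argument is entirely standard, and importantly the resulting constant depends only on $n$, $\tau$, and the $L^2$-operator norm of $H_\tau$, but not on the specific coefficients $\{a_{Q',Q''}\}$ beyond the hypothesized size bound.
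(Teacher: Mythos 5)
Your proposal is correct and follows essentially the same route as the paper: a Calder\'on--Zygmund decomposition, an $L^2$/Chebyshev estimate for the good part, and the key localization claim that $H_\tau b_j$ is supported in $Q_j^\tau$, proved by combining the mean-zero cancellation of $b_j$ (which forces $Q'\subseteq Q_j$) with the index-$\tau$ size constraint and dyadic nestedness (which then forces $Q\subseteq Q_j^\tau$). The paper's argument is stated as a brief sketch phrased as a contradiction, but the underlying content is identical to yours.
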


\begin{proof}
Fix $t>0$ and form the Calder\'on-Zygmund decomposition of $f$ at
height $t$.  Decompose $f$ as  the sum of the good and bad parts,
$g+b$.  The estimate for $g$ is standard.  For $b$, since
Lebesgue measure is doubling, it suffices to show that the set
\[ |\{ x \in \re^n\setminus (\cup_j Q_j^\tau): |H_\tau b(x)|> t/2 \}| \]
has measure 0. Fix $j$ and $x\in \re^n\setminus Q_j^\tau$; then we would be done if
we could show that $H_\tau b_j(x)=0$.   Fix a term $a_{Q',Q''} \la
b_j, h_{Q'} \rangle h_{Q''}(x)$ in the sum defining $H_\tau b_j(x)$.
If this is non-zero, then $h_{Q''}(x)\neq 0$, so $Q''\cap
\re^n\setminus Q_j^\tau \neq \emptyset$.  Since $Q''\subset Q$,
$Q\cap \re^n\setminus Q_j^\tau \neq \emptyset$.  On the other hand,
since $\int_{Q_j} b_j(x)\,dx=0$, $\la b_j, h_{Q'} \rangle \neq 0$
only if $Q'\subset Q_j$, which in turn implies that $Q\subset
Q_j^\tau$, a contradiction.
\end{proof}

Our second lemma is a key estimate that is a sharper variant of a
result known for Calder\'on-Zygmund singular integrals (see
\cite{MR779906}) and whose proof is similar.  For completeness we
include the details.

\begin{lemma} \label{lemma:osc-est}
Given $\tau \geq 0$, let $H_\tau$ be a CZ Haar shift operator. Fix
$\lambda$,  $0<\lambda <1 $. Then for any function $f$,  every
dyadic cube $Q_0$, and every $x\in Q_0$,
\begin{gather*}
\omega_\lambda (H_\tau f,Q_0) \leq
\frac{C_{\tau,n}}{\lambda}\,\avgint_{Q_0^\tau}|f(x)|\,dx, \\
M^{\#,d}_{\lambda,Q_0}(H_\tau f)(x) \leq \frac{C_{\tau,n}}{\lambda} \,
M^df(x).
\end{gather*}
\end{lemma}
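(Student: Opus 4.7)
The plan is to localize $f$ to the ancestor $Q_0^\tau$ and exploit the range restriction in the Haar shift to reduce to a weak $(1,1)$ estimate. Write $f = f_1 + f_2$, where $f_1 = f\chi_{Q_0^\tau}$ and $f_2 = f\chi_{\re^n\setminus Q_0^\tau}$. The key structural claim is that $H_\tau f_2$ is constant on $Q_0$. Indeed, a term $a_{Q',Q''}\langle f_2,h_{Q'}\rangle\, h_{Q''}$ is non-constant on $Q_0$ only if $h_{Q''}$ is non-constant there, which forces $Q''\subseteq Q_0$. In the Haar shift, $Q', Q''\in \Delta(Q)$ and $2^{-\tau n}|Q|\le |Q''|$, so $Q\subseteq Q_0^\tau$ and hence $Q'\subseteq Q_0^\tau$; but then $\langle f_2,h_{Q'}\rangle = 0$ since $f_2$ vanishes on $Q_0^\tau$. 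Thus there is a constant $c_0$ with $(H_\tau f - c_0)\chi_{Q_0} = H_\tau f_1\,\chi_{Q_0}$.

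Next, insert this into the definition of the local mean oscillation:
\begin{equation*}
\omega_\lambda(H_\tau f, Q_0)\;\le\;\big((H_\tau f - c_0)\chi_{Q_0}\big)^*(\lambda|Q_0|)\;=\;(H_\tau f_1\,\chi_{Q_0})^*(\lambda|Q_0|).
\end{equation*}
Apply the inequality \eqref{eqn:mean-est1} with $p=1$, followed by the weak-type $(1,1)$ bound for $H_\tau$ proved in the preceding lemma:
\begin{equation*}
(H_\tau f_1\chi_{Q_0})^*(\lambda|Q_0|)\;\le\;\frac{1}{\lambda}\,\|H_\tau f_1\|_{L^{1,\infty}(Q_0,|Q_0|^{-1}dx)}\;\le\;\frac{C_{\tau,n}}{\lambda|Q_0|}\int_{Q_0^\tau}|f(y)|\,dy,
\end{equation*}
and then absorb the factor $|Q_0^\tau|/|Q_0| = 2^{\tau n}$ into the constant to obtain the first claimed inequality.

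For the sharp maximal estimate, observe that by the very definition of $M^{\#,d}_{\lambda,Q_0}$ we can apply the first inequality to each $Q\in \Delta(Q_0)$ containing $x$:
\begin{equation*}
M^{\#,d}_{\lambda,Q_0}(H_\tau f)(x)\;=\;\sup_{\substack{Q\in \Delta(Q_0)\\ x\in Q}}\omega_\lambda(H_\tau f,Q)\;\le\;\frac{C_{\tau,n}}{\lambda}\sup_{\substack{Q\in \Delta(Q_0)\\ x\in Q}}\avgint_{Q^\tau}|f(y)|\,dy.
\end{equation*}
Since each $Q^\tau$ is a dyadic cube containing $x$, the supremum is dominated by $M^d f(x)$, giving the second inequality.

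The one place that requires real care is the structural claim in the first paragraph: one must verify, using both index constraints $Q'\subseteq Q$ and $|Q|\le 2^{\tau n}\min(|Q'|,|Q''|)$, that a cube $Q''\subseteq Q_0$ forces $Q'\subseteq Q_0^\tau$. Everything else is a direct application of inequality \eqref{eqn:mean-est1}, the weak $(1,1)$ bound for $H_\tau$, and the doubling of dyadic generations.
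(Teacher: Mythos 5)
Your proof is correct and follows essentially the same approach as the paper: decompose $f$ into the part on $Q_0^\tau$ and the part off it, show the second piece is mapped to a constant on $Q_0$ because of the index restriction $2^{-\tau n}|Q|\le|Q'|,|Q''|$, then control the oscillation by the weak $(1,1)$ bound for $H_\tau$ via \eqref{eqn:mean-est1}. The only cosmetic difference is in the structural claim: the paper assumes a term is nonzero at a point of $Q_0$ and deduces $Q_0^\tau\subsetneq Q$ hence $Q_0\subsetneq Q''$ so $h_{Q''}$ is constant on $Q_0$, whereas you argue by contrapositive that $h_{Q''}$ non-constant on $Q_0$ forces $Q''\subseteq Q_0$, hence $Q\subseteq Q_0^\tau$ and $\langle f\chi_{\re^n\setminus Q_0^\tau},h_{Q'}\rangle=0$; these are logically equivalent.
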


\begin{proof}
It suffices to prove the first inequality; the second follows
immediately from the definition of $M^{\#,d}_{\lambda,Q_0}$. Fix $Q_0$
and write $H_\tau$ as the sum of two operators:
\[ H_\tau f(x) = H_\tau(f\chi_{Q_0^\tau})(x) + H_\tau(f\chi_{\re^n\setminus Q_0^\tau})(x). \]
We claim the second term is constant for all $x\in Q_0$.  Let $Q$ be
any dyadic cube.  Then the corresponding term in the sum defining
$H_\tau(f\chi_{\re^n\setminus Q_0^\tau})(x)$ is
\begin{equation} \label{eqn:osc-est1}
\sum_{\substack{Q',Q''\in\Delta(Q)\\ 2^{-\tau n}|Q|\leq |Q'|,|Q''|}}
a_{Q',Q''} \la f\chi_{\re^n\setminus Q_0^\tau}, h_{Q'} \rangle
h_{Q''}(x).
\end{equation}
We may assume that $Q''\cap Q_0\neq \emptyset$ (otherwise we get a
zero term); since $Q''\subset Q$, this implies that $Q\cap
Q_0^\tau\neq \emptyset$.   Similarly, we have $Q\cap (\re^n\setminus
Q_0^\tau)\neq \emptyset$.  Therefore, $Q_0^\tau \subsetneq Q$, so
$|Q_0| <2^{-\tau n}|Q|\leq |Q''|$.  Hence, $Q_0\subsetneq Q''$ and
$h_{Q''}$ is constant on $Q_0$. Thus, \eqref{eqn:osc-est1} does not
depend on $x$ and so is constant on $Q_0$.

Denote this constant by $H_\tau f(Q_0)$; then
\[
|\{ x \in Q_0 : |H_\tau f(x)- H_\tau f(Q_0)|> t \}| = |\{ x \in Q_0
: |H_\tau(f\chi_{Q_0^\tau})(x)|>t \}|.
\]
Since $H_\tau$ is a CZ Haar shift operator it is weak $(1,1)$.
Therefore, by inequality~\eqref{eqn:mean-est1},
\begin{multline*}
\omega_\lambda (H_\tau f, Q_0)   \leq \big((H_\tau f- H_\tau
f(Q_0))\chi_{Q_0}\big)^*(\lambda |Q_0|) \\
\leq
\lambda^{-1}\|H_\tau(f\chi_{Q_0^\tau})\|_{L^{1,\infty}(Q_0,|Q_0|^{-1}dx)}
\leq \frac{C_{\tau,n}}{\lambda} \avgint_{Q_0^\tau} |f(x)|\,dx.
\end{multline*}
\end{proof}

\section{Singular integrals, paraproducts and Haar multipliers}
\label{section:main-proof}

In this section we prove Theorems~\ref{thm:sio-onewt},
\ref{thm:sio-twowt}, \ref{thm:paraproduct} and \ref{thm:haarmult}.
The principal results are the first two for singular integrals; the
results for paraproducts and constant Haar multipliers are
variations of these and we will only sketch the changes.   We will
also indicate the technical obstacles in attempting to apply our
results to general Calder\'on-Zygmund singular integrals.

\subsection*{One weight inequalities: proof of Theorem~\ref{thm:sio-onewt}}

As we discussed in the previous section, to prove
Theorem~\ref{thm:sio-onewt} it will suffice to establish the analogous
result for Haar shift operators.

\begin{theorem} \label{thm:haarshift-onewt}
Given an integer $\tau \geq 0$ and a CZ Haar shift operator
$H_\tau$, and  given $p$, $1<p<\infty$, then for any $w\in A_p$,
\[ \|H_\tau f\|_{L^p(w)} \leq C_{\tau, n,p} \,[w]_{A_p}^{\max\left(1,\frac{1}{p-1}\right)}\|f\|_{L^p(w)}. \]
\end{theorem}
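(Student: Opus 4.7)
The plan is to reduce via the sharp extrapolation theorem to the endpoint case $p=2$, where the target inequality is $\|H_\tau f\|_{L^2(w)} \leq C_{\tau,n}[w]_{A_2}\|f\|_{L^2(w)}$. Once this is established, Theorem~\ref{thm:sharp-extrapol} upgrades it to all $1<p<\infty$ with the desired exponent $\max(1,1/(p-1))$.

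To prove the $p=2$ case, I fix a bounded, compactly supported $f$ (a dense class in $L^2(w)$) and a large dyadic cube $Q_0\supset \supp f$, and apply Theorem~\ref{thm:lerner} to $H_\tau f$ on $Q_0$, using its dyadic version as noted in the remark following the theorem. This produces cubes $Q_j^k\in\Delta(Q_0)$ with pairwise disjoint major subsets $E_j^k\subset Q_j^k$ satisfying $|E_j^k|\geq|Q_j^k|/2$ (Remark~\ref{rem:disjoint}), and
\begin{equation*}
|H_\tau f(x)-m_{H_\tau f}(Q_0)|\leq 4M^{\#,d}_{1/4,Q_0}(H_\tau f)(x)+4\sum_{k,j}\omega_{1/2^{n+2}}(H_\tau f,\widehat Q_j^k)\chi_{Q_j^k}(x).
\end{equation*}
Lemma~\ref{lemma:osc-est} bounds the first term by $C_{\tau,n}M^df(x)$ and each oscillation by $C_{\tau,n}\avgint_{R_j^k}|f|$, where $R_j^k$ denotes the $(\tau+1)$-th dyadic ancestor of $Q_j^k$, so $|R_j^k|=2^{(\tau+1)n}|Q_j^k|$. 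Sending $|Q_0|\to\infty$ through dyadic cubes, inequality~\eqref{eqn:median-f*} combined with Chebyshev's inequality forces $m_{H_\tau f}(Q_0)\to 0$ (since $H_\tau f\in L^2$), producing the pointwise bound
\begin{equation*}
|H_\tau f(x)|\leq C_{\tau,n}\bigl(M^df(x)+\mathcal{T}f(x)\bigr),\qquad \mathcal{T}f(x):=\sum_{k,j}\Big(\avgint_{R_j^k}|f|\Big)\chi_{Q_j^k}(x).
\end{equation*}

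The $L^2(w)$ norm of $M^df$ is bounded by $C[w]_{A_2}\|f\|_{L^2(w)}$ via Theorem~\ref{thm:sharpmax}, so it remains to obtain the same estimate for $\mathcal{T}f$. I dualize with $g\geq 0$, $\|g\|_{L^2(w)}\leq 1$, and set $\sigma=w^{-1}$. Using $Q_j^k\subset R_j^k$ and $|R_j^k|\leq 2^{(\tau+1)n+1}|E_j^k|$,
\begin{align*}
\int\mathcal{T}f\cdot g\,w\,dx
&\leq C_\tau\sum_{k,j}\Big(\frac{1}{|R_j^k|}\int_{R_j^k}(|f|\sigma^{-1})\sigma\Big)\Big(\frac{1}{|R_j^k|}\int_{R_j^k}gw\Big)|E_j^k|\\
&\leq C_\tau[w]_{A_2}\sum_{k,j}\Big(\frac{1}{\sigma(R_j^k)}\int_{R_j^k}(|f|\sigma^{-1})\sigma\Big)\Big(\frac{1}{w(R_j^k)}\int_{R_j^k}gw\Big)|E_j^k|,
\end{align*}
where the second step uses the $A_2$ bound $\sigma(R_j^k)w(R_j^k)|R_j^k|^{-2}\leq [w]_{A_2}$. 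For $x\in E_j^k\subset R_j^k$ the two weighted averages are dominated respectively by $M^d_\sigma(|f|\sigma^{-1})(x)$ and $M^d_w(g)(x)$; disjointness of $\{E_j^k\}$, the Cauchy-Schwarz inequality (writing $dx=\sigma^{1/2}w^{1/2}dx$), and Lemma~\ref{lemma:wtdmax} applied to both $\sigma$ and $w$ then yield
\begin{equation*}
\int\mathcal{T}f\cdot gw\,dx\leq 4C_\tau[w]_{A_2}\bigl\||f|\sigma^{-1}\bigr\|_{L^2(\sigma)}\|g\|_{L^2(w)}=4C_\tau[w]_{A_2}\|f\|_{L^2(w)}.
\end{equation*}
Taking the supremum over $g$ closes the sharp $A_2$ estimate, and Theorem~\ref{thm:sharp-extrapol} finishes the proof.

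The main obstacle is controlling the sparse sum $\mathcal{T}f$ in $L^2(w)$ with a single factor of $[w]_{A_2}$; prior approaches required Bellman functions or two-weight $Tb$-type theorems. The decisive simplification is the reformulation of $\avgint_{R_j^k}|f|$ against the dual weight $\sigma$, so that the $A_2$ constant enters exactly once and the remaining averages are absorbed by the weighted dyadic maximal bound of Lemma~\ref{lemma:wtdmax}. All of the genuinely hard analysis is thereby funneled into Lerner's decomposition and the oscillation estimate of Lemma~\ref{lemma:osc-est}.
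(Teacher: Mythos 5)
Your proposal follows the same route as the paper: reduce to $p=2$ via the sharp extrapolation theorem, apply Lerner's local-oscillation decomposition together with Lemma~\ref{lemma:osc-est}, estimate the maximal-function term via Theorem~\ref{thm:sharpmax}, and handle the sparse sum by duality, writing the $A_2$ constant in once and absorbing the remaining weighted averages with Lemma~\ref{lemma:wtdmax}. The core argument is correct and essentially identical to the one in the paper.

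Two small points would need to be tightened for a fully rigorous write-up. First, there need not exist a dyadic cube $Q_0\supset\supp f$ (dyadic cubes never cross coordinate hyperplanes), so you cannot simply pick one large dyadic $Q_0$; the paper handles this by working separately on each of the $2^n$ quadrants, taking $Q_{N,j}$ to be the dyadic cube of side $2^N$ in the $j$-th quadrant adjacent to the origin, and summing. Second, the collection $\{Q_j^k\}$ produced by Theorem~\ref{thm:lerner} depends on $Q_0$, so the displayed ``pointwise bound'' $|H_\tau f|\le C(M^df+\mathcal{T}f)$ is not a single $Q_0$-independent inequality: what one actually has is, for each fixed $Q_0$, the bound on $Q_0$ plus the uniform $L^2(w)$-estimate $\|\,|H_\tau f - m_{H_\tau f}(Q_0)|\,\|_{L^2(Q_0,w)}\le C[w]_{A_2}\|f\|_{L^2(w)}$, and one then passes to the limit via Fatou's lemma applied to the integrals over the exhausting cubes (as the paper does). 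With these adjustments your argument coincides with the paper's proof.
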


\begin{proof}[Proof of Theorem \ref{thm:haarshift-onewt}]
By Theorem~\ref{thm:sharp-extrapol} it will suffice to prove that
\[ \|H_\tau f\|_{L^2(w)} \leq C_{\tau, n} [w]_{A_2}\|f\|_{L^2(w)}. \]
Fix $w\in A_2$ and fix $f$.  By a standard approximation argument we
may assume without loss of generality that $f$ is bounded and has
compact support.  Let $\re^n_j$, $1\le j\le 2^n$, denote the
$n$-dimensional quadrants in $\re^n$: that is, the sets
$I^{\pm}\times I^{\pm}\times\cdots\times I^{\pm}$ where $I^+ =
[0,\infty)$ and $I^-=(-\infty,0)$.

For each $j$, $1\le j\le 2^n$, and for each $N>0$ let $Q_{N,j}$ be
the dyadic cube adjacent to the origin of side length $2^N$ that is
contained in $\re^n_j$. Since  $Q_{N,j}\in \Delta$,
$\Delta(Q_{N,j})\subset \Delta$. Because $H^\tau$ is a CZ shift operator,
it is bounded on $L^2$.    Thus, since $f\in L^2$,  by \eqref{eqn:median-f*} and
\eqref{eqn:mean-est2}, $m_{H_\tau f} (Q_{N,j})\rightarrow 0$ as
$N\rightarrow \infty$.   Therefore, by Fatou's lemma and Minkowski's
inequality,
\[ \|H_\tau f\|_{L^2(w)} \leq \liminf_{N\rightarrow \infty}
\sum_{j=1}^{2^n} \left(\int_{Q_{N,j}} |H_\tau f(x) - m_{H_\tau
f}(Q_{N,j})|^2w(x)\,dx\right)^{1/2}. \]
Hence, it will suffice to prove that each term in the sum on the
right is bounded by $C_{\tau,n}[w]_{A_2}\|f\|_{L^2(w)}$.

Fix $j$ and let $Q_N=Q_{N,j}$.  By Theorem~\ref{thm:lerner} and
Lemma~\ref{lemma:osc-est}, for every $x\in Q_N$ we have that
\begin{align}\label{eqn:Htau-decomp}
& |H_\tau f(x) - m_{H_\tau f}(Q_N)|
\\ \nonumber
&\qquad \qquad \le 4\,M^{\#,d}_{\frac{1}{4},Q_N}(H_\tau f)(x) +
4\sum_{j,k} \omega_{\frac{1}{2^{n+2}}}(H_\tau
f,\widehat{Q}_j^k)\chi_{Q_j^k}(x)
\\ \nonumber
&\qquad \qquad \le C_{\tau,n}\, M f(x) + C_{\tau,n}\,\sum_{j,k}
\left(\avgint_{P_j^k} |f(x)|\,dx\right)\, \chi_{Q_j^k}(x)
\\ \nonumber
& \qquad \qquad = C_{\tau,n}\, M f(x) + C_{\tau,n}\, F(x),
\end{align}
where $P_j^k=(\widehat{Q}_j^k)^\tau$. We get the desired estimate
for the first term from Theorem~\ref{thm:sharpmax} with $p=2$:
\begin{align*}
\|M f\|_{L^2(Q_N,w)} \le \|M f\|_{L^2(w)} \leq C_{n}\,[w]_{A_2}
\|f\|_{L^2(w)}.
\end{align*}

\medskip

To estimate $F$ we use duality.  Fix a non-negative function $h\in
L^2(w)$ with $\|h\|_{L^2(w)}=1$; then by Remark~\ref{rem:disjoint}
and Lemma~\ref{lemma:wtdmax} we have that
\begin{align*}
\int_{Q_N} F(x)\,h(x)\,w(x)\,dx & = C_{\tau,n}\, \sum_{j,k}
\avgint_{P_j^k} |f(x)|\,dx
\int_{Q_j^k} w(x)h(x)\,dx  \\
& \leq  2 \cdot 2^{(\tau+1)n}\sum_{j,k}
\frac{w(P_j^k)}{|P_j^k|}\frac{w^{-1}(P_j^k)}{|P_j^k|} \; |E_j^k|\\
& \qquad \times \frac{1}{w^{-1}(P_j^k)} \int_{P_j^k} |f(x)|w(x)
w(x)^{-1}\,dx \\
& \qquad \times \frac{1}{w(Q_j^k)}\int_{Q_j^k} h(x)w(x)\,dx \\
& \leq C_{\tau,n}\, [w]_{A_2}\sum_{j,k} \int_{E_j^k} M^d_{w^{-1}}(fw)(x) M^d_w h(x)\,dx \\
& \leq C_{\tau,n}\, [w]_{A_2}\int_\subRn  M^d_{w^{-1}} (fw)(x)
M^d_w h(x)\,dx \\
& \leq C_{\tau,n}\,[w]_{A_2}
\left(\int_\subRn M^d_{w^{-1}} (fw)(x)^2 w(x)^{-1}\,dx\right)^{1/2} \\
& \qquad \qquad \times \left(\int_\subRn M^d_{w} h(x)^2 w(x)\,dx\right)^{1/2} \\
& \leq C_{\tau,n}\,[w]_{A_2}
\left(\int_\subRn |f(x)w(x)|^2 w(x)^{-1}\,dx\right)^{1/2} \\
& \qquad \qquad \times \left(\int_\subRn h(x)^2 w(x)\,dx\right)^{1/2} \\
& = C_{\tau,n}\,[w]_{A_2} \left(\int_\subRn
|f(x)|^2w(x)\,dx\right)^{1/2}.
\end{align*}
If we take the supremum over
all such functions $h$, we conclude that
$$
\|F\|_{L^2(Q_N,w)} \le C_{\tau,n}\,[w]_{A_2}\,\|f\|_{L^2(w)}.
$$
Combining our estimates we have that
\[
\left(\int_{Q_N} |H_\tau f(x)-m_{H_\tau f}(Q_N)|^2
w(x)\,dx\right)^{1/2} \leq C_{\tau,n}\,[w]_{A_2} \|f\|_{L^2(w)},
\]
and this completes the proof.
\end{proof}

\subsection*{Two weight inequalities:  Proof of Theorem~\ref{thm:sio-twowt}}

To prove Theorem~\ref{thm:sio-twowt} it will suffice to establish the
corresponding result for the Haar shift operators.  We record this
as a separate result.

\begin{theorem} \label{thm:haarshift-twowt}
Given an integer $\tau \geq 0$, let $H_\tau$ be a CZ Haar shift
operator.  Given $p$, $1<p<\infty$, and let $A$ and $B$ be Young
functions such that $\bar{A}\in B_{p'}$ and $\bar{B}\in B_p$. Then
for any pair $(u,v)$ such that \eqref{eqn:sio-twowt1} holds we have
that
\[ \|H_\tau f\|_{L^p(u)} \leq C\|f\|_{L^p(v)}. \]
\end{theorem}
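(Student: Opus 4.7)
The plan is to imitate the proof of Theorem~\ref{thm:haarshift-onewt} step by step, replacing every one-weight ingredient by its two-weight counterpart. First, by a standard density argument I may assume $f$ is bounded with compact support, so $H_\tau f \in L^2(\re^n)$; by \eqref{eqn:median-f*} and \eqref{eqn:mean-est2}, the medians $m_{H_\tau f}(Q_{N,j})$ over the quadrant-adjacent dyadic cubes $Q_{N,j}$ used in the proof of Theorem~\ref{thm:haarshift-onewt} then tend to $0$ as $N \to \infty$. Fatou's lemma together with Minkowski's inequality reduce matters to the uniform-in-$N$ estimate
\[
\left(\int_{Q_N}|H_\tau f - m_{H_\tau f}(Q_N)|^p\,u\,dx\right)^{1/p} \le C\,\|f\|_{L^p(v)},
\]
and applying Theorem~\ref{thm:lerner} together with Lemma~\ref{lemma:osc-est} yields the pointwise bound \eqref{eqn:Htau-decomp}. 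It therefore suffices to control $\|Mf\|_{L^p(u)}$ and $\|F\|_{L^p(Q_N,u)}$ separately, where $F = \sum_{j,k}\big(\avgint_{P_j^k}|f|\big)\chi_{Q_j^k}$ and $P_j^k = (\widehat{Q}_j^k)^\tau$.

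The maximal term is immediate from Theorem~\ref{thm:twowt-max}: since $\bar A \in B_{p'}$ implies $\|u^{1/p}\|_{p,Q} \le C\,\|u^{1/p}\|_{A,Q}$, the hypothesis \eqref{eqn:sio-twowt1} automatically forces the two-weight pair $(u,v)$ to satisfy the maximal-function bump condition, giving $\|Mf\|_{L^p(u)} \le C\,\|f\|_{L^p(v)}$. The substantive step, and the main obstacle, is the $F$ term, which I attack by duality. Fix $h \ge 0$ with $\|h\|_{L^{p'}(u)} = 1$; since $h, u \ge 0$ and $Q_j^k \subset P_j^k$, I may replace the integral over $Q_j^k$ by one over $P_j^k$ and then apply the generalized H\"older inequality twice: once with the pair $(\bar B, B)$ to the $f$-average, yielding $\avgint_{P_j^k}|f| \le 2\,\|fv^{1/p}\|_{\bar B,P_j^k}\,\|v^{-1/p}\|_{B,P_j^k}$, and once with the pair $(\bar A, A)$ to the $hu$-integral, yielding $\int_{P_j^k} hu\,dx \le 2\,|P_j^k|\,\|hu^{1/p'}\|_{\bar A,P_j^k}\,\|u^{1/p}\|_{A,P_j^k}$.

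Invoking the bump condition \eqref{eqn:sio-twowt1} to cancel $\|v^{-1/p}\|_{B,P_j^k}\,\|u^{1/p}\|_{A,P_j^k} \le C$, the remaining sum collapses to
\[
C\sum_{j,k}|P_j^k|\,\|fv^{1/p}\|_{\bar B,P_j^k}\,\|hu^{1/p'}\|_{\bar A,P_j^k}.
\]
The pairwise disjoint sets $E_j^k$ of Remark~\ref{rem:disjoint} satisfy $|E_j^k| \ge 2^{-(\tau+2)n}|P_j^k|$, so this is dominated by $C_{\tau,n}\int_{\re^n} M_{\bar B}(fv^{1/p})(x)\,M_{\bar A}(hu^{1/p'})(x)\,dx$. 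H\"older's inequality followed by Theorem~\ref{thm:orlicz-max} (applicable because $\bar B \in B_p$ and $\bar A \in B_{p'}$) bounds this by $C\,\|f\|_{L^p(v)}\|h\|_{L^{p'}(u)} = C\,\|f\|_{L^p(v)}$, and taking the supremum over $h$ completes the argument. The only real subtlety is the bookkeeping needed to transfer all Orlicz norms from $Q_j^k$ onto the larger cube $P_j^k$ so that the bump condition can be invoked on a single cube, with the size comparison $|P_j^k|\le 2^{(\tau+2)n}|E_j^k|$ absorbing the volume factor.
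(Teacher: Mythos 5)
Your proof is correct and follows the paper's own argument step for step: reduce via Fatou and Minkowski to a uniform estimate on $Q_N$, apply Theorem~\ref{thm:lerner} with Lemma~\ref{lemma:osc-est} to get \eqref{eqn:Htau-decomp}, handle the maximal term via Theorem~\ref{thm:twowt-max}, and handle the $F$ term by duality, generalized H\"older on the expanded cube, the bump condition, disjointness of the $E_j^k$, and $L^p$/$L^{p'}$ boundedness of the Orlicz maximal operators. The only (immaterial) cosmetic difference is your choice of dual pairing: the paper dualizes against $h\in L^{p'}(\re^n)$ (Lebesgue measure) and then uses convexity to push $\|u^{1/p}\|_{A,Q_j^k}$ up to $P_j^k$, while you dualize against $h\in L^{p'}(u)$, enlarge the domain of $\int hu$ from $Q_j^k$ to $P_j^k$ directly, and split $u = u^{1/p}u^{1/p'}$ before applying generalized H\"older on $P_j^k$ — so both Orlicz norms land on $P_j^k$ without a separate convexity step.
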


\begin{proof}[Proof of Theorem \ref{thm:haarshift-twowt}]

The proof is very similar to the proof of
Theorem~\ref{thm:haarshift-onewt} replacing the $A_2$ estimates with
an argument from the second half of the proof of the main theorem in
\cite{MR2351373}; therefore, we omit many of the details.

We argue as in the one-weight case and with the same notation;  it
will suffice to prove
\[ \left(\int_{Q_N} |H_\tau f(x)-m_{H_\tau f}(Q_N)|^p
 u(x)\,dx\right)^{1/p} \leq C\|f\|_{L^p(v)} \]
and we use \eqref{eqn:Htau-decomp}.  The estimate of  the term
containing the maximal operator is straightforward:  by
Theorem~\ref{thm:twowt-max} we have $M: L^p(v)\rightarrow L^p(u)$
since the pair $(u,v)$ satisfies \eqref{eqn:sio-twowt1}. Therefore,
by duality (this time with respect to Lebesgue measure) it is
enough to show that for every non-negative $h\in L^{p'}(\re^n)$ with
$\|h\|_{L^{p'}}=1$,
\[ I=\int_{Q_N} F(x)\,u(x)^{1/p} \,h(x)\,dx\leq C\|f\|_{L^p(v)}. \]
We apply Remark~\ref{rem:disjoint} and the generalized H\"older's
inequality to get
\begin{align*}
I & \leq C\sum_{j,k} \avgint_{P_j^k} |f(x)|\,dx \avgint_{Q_j^k}
u(x)^{1/p}h(x)\,dx \; |E_j^k| \\
&\leq C\sum_{j,k} \|fv^{1/p}\|_{\bar{B},P_j^k}\|v^{-1/p}\|_{B,P_j^k}
\|u^{1/p}\|_{A,Q_j^k}\|h\|_{\bar{A},Q_j^k}\;|E_j^k|.
\end{align*}
By convexity, $\|u^{1/p}\|_{A,Q_j^k}\leq
 2^{n(\tau+1)}\|u^{1/p}\|_{A,P_j^k}$, so since the pair $(u,v)$ satisfies
 \eqref{eqn:sio-twowt1},
\begin{align*}
I & \leq C\,\sum_{j,k} \int_{E_j^k}
M_{\bar{B}}(fv^{1/p})(x)M_{\bar{A}}h(x)\,dx \\
& \leq C\,\int_{\re^n} M_{\bar{B}}(fv^{1/p})(x)M_{\bar{A}}h(x)\,dx.
\end{align*}
Since $\bar{A}\in B_{p'}$ and $\bar{B}\in B_p$, by
Theorem~\ref{thm:orlicz-max}, $M_{\bar{B}}$ is bounded on $L^p$ and
$M_{\bar{A}}$ is bounded in $L^{p'}$.  The desired estimate now
follows by H\"older's inequality.
\end{proof}

\subsection*{General Calder\'on-Zygmund singular integrals}
Key to the proofs of
Theorems~\ref{thm:haarshift-onewt} and~\ref{thm:haarshift-twowt}
are the sharp estimates for the local mean oscillation in
Lemma~\ref{lemma:osc-est}.  If we were to try to extend these proofs
to an arbitrary Calder\'on-Zygmund singular integral $T$, then we
would have to estimate the local mean oscillation by a sum (see
\cite{MR779906}):
\[ \omega_\lambda (Tf,Q) \leq C\sum_{i=0}^\infty 2^{-i} \avgint_{2^i Q}
|f(x)|\,dx. \]

If we use this estimate in the proof of
Theorem~\ref{thm:haarshift-onewt}, then we still get that $T$ is
bounded (since the sum is bounded by $2\inf_{x\in Q} Mf(x)$), but we
get an additional factor of $[w]_{A_2}$.   The proof of
Theorem~\ref{thm:haarshift-twowt} can be modified to handle this
sum, but to get convergence you need the additional assumption that
$p>n$.  This is the approach used by  Lerner~\cite{lernerP2009}.
This (seemingly artificial) restriction $p>n$ also appears in
\cite{MR2351373}.  It would be very interesting to find a refinement
of Theorem~\ref{thm:lerner} that would let us remove this
restriction.  Alternatively, it is tempting to conjecture that the
estimate above could be improved by replacing $2^{-i}$ by
$2^{-(n+\epsilon)i}$, which would be sufficient to adapt the proofs
in both the one and two-weight case.  However, it is not clear that
such an inequality is true, even for singular integrals with smooth
kernels.

\subsection*{Dyadic paraproducts and Haar multipliers:  Proof of Theorems~\ref{thm:paraproduct} and \ref{thm:haarmult}}

The proof of Theorem~\ref{thm:paraproduct} is essentially identical
to the proof of the corresponding results for singular integrals
once we prove the analog of Lemma~\ref{lemma:osc-est}:
\[ \omega_\lambda(\pi_b f,Q) \leq \frac{C\|b\|_{*,d}}{\lambda}\avgint_Q |f(x)|\,dx. \]

The proof follows as before.   The dyadic paraproduct is a local
operator, since  for any $I\in\Delta$, $h_I$ is constant on proper
dyadic sub-intervals of $I$, and so, given a fixed dyadic interval
$I_0$, $\pi_b(f\chi_{\re\setminus I_0})$ is constant on $I_0$.
Furthermore, $\pi_b$ is bounded on $L^p$ and satisfies a weak
$(1,1)$ inequality:  for every $t>0$,
\[ |\{ x \in \re : |\pi_bf(x)|> t\}| \leq
\frac{C\|b\|_{*,d}}{t}\int_{\re} |f(x)|\,dx.  \]
For a proof, see Pereyra~\cite{MR1864538}.

\medskip

Theorem~\ref{thm:haarmult} is actually a special case of
Theorems~\ref{thm:haarshift-onewt} and~\ref{thm:haarshift-twowt},
since the constant Haar multipliers are clearly Haar shift operators
of index  $\tau=0$. The dependence on $\|\alpha\|_{\ell^\infty}$
follows at once by linearity.

\section{Maximal singular integrals}
\label{section:maximal}

In this section we prove Theorem~\ref{thm:max-singular}.  To do so,
we will follow the approach used by Hyt\"onen {\em et
al.}~\cite{hytonen-lacey-reguera-vagharshakyanP2009}  and actually
prove the corresponding result for a family of ``maximal'' dyadic
shift operators.  The underlying dyadic operators are a
generalization of the Haar shift operators defined in
Section~\ref{section:dyadic}.  As noted
in~\cite{hytonen-lacey-reguera-vagharshakyanP2009}, the results for
the maximal singular integrals associated to the Hilbert transform,
the Riesz transforms and the Beurling-Ahlfors operator are gotten by
the same approximation arguments as we discussed above.

We begin by defining the appropriate shift operators.  To
distinguish them from the operators defined above, we will refer to
them as generalized Haar shift operators.  (In
\cite{hytonen-lacey-reguera-vagharshakyanP2009} they are simply
referred to as Haar shift operators, but our change in terminology
should not cause any confusion.)  We say that an operator $T$ is a
generalized Haar shift operator of index $\tau\geq 0$ if
\[ T f=\sum_{Q\in\Delta} \langle f,g_Q\rangle\,\gamma_Q, \]
where the functions $\gamma_Q$ are such that:
\begin{enumerate}
\item $\supp(\gamma_Q) \subset Q$;
\item if $Q'\in \Delta$ and $Q'\subset Q$ with $|Q'|\le 2^{-\tau\,n}\,|Q|$, then $\gamma_Q$ is constant on $Q'$;
\item $\|\gamma_Q\|_\infty \leq |Q|^{-1/2}$.
\end{enumerate}
The functions $g_Q$ also have these properties. Finally we assume
that the functions $\gamma_Q,\,g_Q$ are such that $T$ extends to a
bounded operator on $L^2$.  Together, these hypotheses imply that
$T$ is of weak-type $(1,1)$ (see
\cite{hytonen-lacey-reguera-vagharshakyanP2009}).  Examples of
generalized Haar shift operators include the dyadic paraproducts and
their adjoints.

Associated with a generalized Haar shift operator $T$ is the maximal
Haar shift operator
\[ T_* f
= \sup_{\epsilon>0}|T_\epsilon f|
=\sup_{\epsilon>0}\bigg|\sum_{\substack{Q\in\Delta\\ |Q|\ge
\epsilon^n}} \langle f,g_Q\rangle\,\gamma_Q\bigg|, \]
We again have that $T_*$ is bounded on $L^2$ and is of weak-type
$(1,1)$ (see \cite{hytonen-lacey-reguera-vagharshakyanP2009}).

Our main result for maximal Haar shift operators is the following.

\begin{theorem} \label{thm:T*}
Let $T$ be a generalized Haar shift operator of index $\tau\geq 0$,
and let $T_*$ be the corresponding maximal Haar shift operator.
Then, for every  $p$, $1<p<\infty$, and for all $w\in A_p$,
\[ \|T_* f\|_{L^p(w)} \leq
C_{\tau,n,p}
[w]_{A_p}^{\max\left(1,\frac{1}{p-1}\right)}\|f\|_{L^p(w)}. \]
Furthermore, if the pair of weights $(u,v)$ satisfies
\eqref{eqn:sio-twowt1}, then
\[ \|T_* f\|_{L^p(u)} \leq
C \|f\|_{L^p(v)}. \]
\end{theorem}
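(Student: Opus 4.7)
The plan is to mirror the strategy used for Theorems~\ref{thm:haarshift-onewt} and~\ref{thm:haarshift-twowt}, with a single modification needed to accommodate the fact that $T_*$ is sublinear rather than linear. Concretely, both the one-weight and two-weight estimates will follow once we establish the analog of Lemma~\ref{lemma:osc-est} for $T_*$:
\begin{equation}\label{eqn:key-T*-osc}
\omega_\lambda(T_* f, Q_0) \leq \frac{C_{\tau,n}}{\lambda}\avgint_{Q_0^\tau}|f(x)|\,dx,
\qquad
M^{\#,d}_{\lambda,Q_0}(T_* f)(x) \leq \frac{C_{\tau,n}}{\lambda}\,M^d f(x).
\end{equation}
Once \eqref{eqn:key-T*-osc} is in hand, the arguments from Section~\ref{section:main-proof} go through verbatim: Theorem~\ref{thm:lerner} combined with \eqref{eqn:key-T*-osc} yields the pointwise decomposition $|T_* f(x) - m_{T_* f}(Q_N)| \leq C_{\tau,n}(Mf(x) + F(x))$ on $Q_N$, and the sharp maximal function bound (Theorem~\ref{thm:sharpmax}) together with the duality/weighted-dyadic-maximal argument (Lemma~\ref{lemma:wtdmax}) handles the one-weight case, while the generalized H\"older inequality plus Theorem~\ref{thm:orlicz-max} handles the two-weight case.

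The one point requiring genuinely new reasoning is \eqref{eqn:key-T*-osc}, which I would prove as follows. Fix a dyadic cube $Q_0$ and write, for each $\epsilon > 0$,
\[
T_\epsilon f(x) = T_\epsilon(f\chi_{Q_0^\tau})(x) + T_\epsilon(f\chi_{\re^n \setminus Q_0^\tau})(x).
\]
Exactly as in the proof of Lemma~\ref{lemma:osc-est}, the conditions on the generalized Haar functions $\gamma_Q, g_Q$ force the second summand to be \emph{constant} in $x \in Q_0$ (any cube $Q$ in the defining sum must contain $Q_0^\tau$ strictly, so $\gamma_Q$ is constant on $Q_0$). Call this constant $c_\epsilon(Q_0)$ and set
\[
c(Q_0) = \sup_{\epsilon>0}|c_\epsilon(Q_0)| = T_*(f\chi_{\re^n \setminus Q_0^\tau})(x), \quad x\in Q_0.
\]
Since $T_*f(x) = \sup_\epsilon|T_\epsilon(f\chi_{Q_0^\tau})(x) + c_\epsilon(Q_0)|$, the elementary inequality
\[
\bigl|\sup_\epsilon|A_\epsilon + B_\epsilon| - \sup_\epsilon|B_\epsilon|\bigr| \leq \sup_\epsilon|A_\epsilon|
\]
gives, for $x\in Q_0$,
\[
|T_* f(x) - c(Q_0)| \leq T_*(f\chi_{Q_0^\tau})(x).
\]
Using this with \eqref{eqn:mean-est1} and the weak-type $(1,1)$ bound for $T_*$ (which holds by hypothesis and is established in~\cite{hytonen-lacey-reguera-vagharshakyanP2009}) yields
\[
\omega_\lambda(T_* f, Q_0) \leq \bigl((T_* f - c(Q_0))\chi_{Q_0}\bigr)^*(\lambda |Q_0|) \leq \frac{C_{\tau,n}}{\lambda}\avgint_{Q_0^\tau}|f|\,dx,
\]
and the second inequality in \eqref{eqn:key-T*-osc} follows directly from the definition of $M^{\#,d}_{\lambda,Q_0}$.

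With \eqref{eqn:key-T*-osc} established, I would then run the argument of Theorem~\ref{thm:haarshift-onewt} in the $p = 2$ case: restrict to quadrants and dyadic exhaustion cubes $Q_{N,j}$, use that $T_*f \in L^2$ (by the $L^2$-boundedness of $T_*$) together with \eqref{eqn:mean-est2} to conclude $m_{T_* f}(Q_{N,j}) \to 0$, apply Fatou and Minkowski, and then invoke Lerner's decomposition together with \eqref{eqn:key-T*-osc}; the resulting estimate by $Mf + F$ is handled by Theorem~\ref{thm:sharpmax} and the $A_2$-duality argument exactly as before. Sharp extrapolation (Theorem~\ref{thm:sharp-extrapol}) upgrades this to general $1 < p < \infty$ with the exponent $\max(1, 1/(p-1))$. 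For the two-weight bound, the same Lerner decomposition reduces matters to estimating $\int F\, u^{1/p} h$ by duality; this is bounded by $\sum_{j,k}\|fv^{1/p}\|_{\bar B, P_j^k}\|v^{-1/p}\|_{B, P_j^k}\|u^{1/p}\|_{A, Q_j^k}\|h\|_{\bar A, Q_j^k}|E_j^k|$, and condition \eqref{eqn:sio-twowt1} together with Theorem~\ref{thm:orlicz-max} for $M_{\bar A}$ on $L^{p'}$ and $M_{\bar B}$ on $L^p$ closes the estimate. The main obstacle is really \eqref{eqn:key-T*-osc}; once the sublinearity issue is circumvented by the supremum inequality above, the rest of the argument is essentially a transcription of the proofs in Section~\ref{section:main-proof}.
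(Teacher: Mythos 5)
Your proof is correct and matches the paper's overall strategy: reduce to the maximal generalized Haar shift, establish an oscillation bound, apply Lerner's decomposition, and then run the singular-integral argument (Fatou/Minkowski exhaustion, sharp maximal bound, $A_2$-duality with the weighted dyadic maximal operator, sharp extrapolation, Orlicz maximal operators in the two-weight case) verbatim. There is one small but genuine difference worth noting, and it is in the key lemma. You prove a bound directly on $\omega_\lambda(T_* f, Q_0)$, by choosing the competitor $c(Q_0)=T_*(f\chi_{\re^n\setminus Q_0^\tau})$ and invoking the elementary inequality $\bigl|\sup_\epsilon|A_\epsilon+B_\epsilon|-\sup_\epsilon|B_\epsilon|\bigr|\le\sup_\epsilon|A_\epsilon|$; the paper instead proves the uniform bound $\sup_{\epsilon>0}\omega_\lambda(T_\epsilon f,Q_0)\le \frac{C_{\tau,n}}{\lambda}\avgint_{Q_0^\tau}|f|$ for the linear truncations and passes to the supremum at the level of the integral before invoking Lerner. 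Your variant has the advantage that Lerner's decomposition is applied once to the single measurable function $T_* f$, producing a single family of stopping cubes $\{Q_j^k\}$; the per-$\epsilon$ formulation produces $\epsilon$-dependent stopping cubes, and interchanging the supremum over $\epsilon$ with the resulting pointwise decomposition inside the $L^2(w)$-norm requires an additional step that the paper leaves implicit. Both versions of the lemma rest on exactly the same observation --- that for any $\epsilon$ the non-local part $T_\epsilon(f\chi_{\re^n\setminus Q_0^\tau})$ is constant on $Q_0$, because any contributing $Q$ must strictly contain $Q_0^\tau$, forcing $\gamma_Q$ to be constant on $Q_0$ --- and the same input, namely the weak-type $(1,1)$ bound for $T_*$ from~\cite{hytonen-lacey-reguera-vagharshakyanP2009}. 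The rest of the one- and two-weight arguments in your write-up agree with the paper's.
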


The proof of Theorem~\ref{thm:T*} is very much the same as the
proofs of Theorem~\ref{thm:haarshift-onewt}
and~\ref{thm:haarshift-twowt}, so we will only describe the
differences between the two arguments. First, if $f$ is bounded and
has compact support, $\sup_{\epsilon>0}|m_{T_\epsilon
f}(Q)|\rightarrow 0$ as $|Q|\to\infty$. Indeed, by
\eqref{eqn:median-f*} and \eqref{eqn:mean-est2},
$$
\sup_{\epsilon>0}|m_{T_\epsilon f}(Q)| \le 2^{1/p} \sup_{\epsilon>0}
\left(\frac1{|Q|}\int_Q |T_\epsilon f|^2\,dx\right)^{1/2} \le
|Q|^{-1/2}\,\|T_* f\|_{L^2},
$$
and the right-hand term tends to $0$ as $|Q|\to\infty$ since $T_*$
is bounded on $L^2$.  Now fix $j$,  $1\le j\le n$,  and as before
let $Q_N=Q_{N,j}$.   Then in the one-weight case by Fatou's lemma we
have that
\begin{align*}
& \int_{\re^n_j} |T_* f(x)|^2w(x)\,dx \\
& \qquad \qquad \le \liminf_{N\to\infty} \int_{Q_N}
\big|\sup_{\epsilon>0}|T_\epsilon f(x)|-
\sup_{\epsilon>0}|m_{T_\epsilon f}(Q_N)| \big|^2w(x)\,dx
\\
& \qquad \qquad \le \liminf_{N\to\infty} \int_{Q_N}
\sup_{\epsilon>0}|T_\epsilon f(x)- m_{T_\epsilon f}(Q_N)
\big|^2w(x)\,dx.
\end{align*}
In the two-weight case we get the same inequality with $2$ replaced
by $p$ and $w$ with the weight $u$.

Fix $\epsilon>0$ and apply Theorem~\ref{thm:lerner}.   To continue
the proof we need an analog of Lemma~\ref{lemma:osc-est} that takes
into account the supremum.  This in turn reduces to showing the
following:  given  $\lambda$, $0<\lambda<1$, and a dyadic cube
$Q_0$, for every $x\in Q_0$
\begin{equation}\label{eqn:osc-Teps}
\sup_{\epsilon>0}\omega_\lambda (T_\epsilon f,Q_0) \leq
\frac{C_{\tau,n}}{\lambda}\,\avgint_{Q_0^\tau}|f(x)|\,dx.
\end{equation}
Given inequality~(\ref{eqn:osc-Teps}) the remainder of the proof in
both the one and two-weight case proceeds exactly as before.

\medskip

To prove \eqref{eqn:osc-Teps} we proceed as in the proof of
Lemma~\ref{lemma:osc-est}. Fix $\epsilon>0$; since  $T_\epsilon$ is
linear,
$$
T_\epsilon f(x) = T_\epsilon(f\chi_{Q_0^\tau})(x) +
T_\epsilon(f\chi_{\re^n\setminus Q_0^\tau})(x)
$$
We claim that the second term is constant for all $x\in Q_0$; denote
it by $T_\epsilon (Q_0)$.  Assuming this for the moment, we have
that
\begin{align*}
|\{ x \in Q_0 : |T_\epsilon f(x)- T_\epsilon f(Q_0)|> t \}| & = |\{
x \in Q_0 : |T_\epsilon(f\chi_{Q_0^\tau})(x)|>t \}|
\\
& \le
|\{ x \in Q_0 : T_*(f\chi_{Q_0^\tau})(x)>t \}| \\
& \le \frac{C}{t}\,\int_{Q_0^\tau}|f(x)|\,dx,
\end{align*}
where we have used that $T_*$ is of weak-type $(1,1)$. Inequality
(\ref{eqn:osc-Teps}) follows at once from this and
\eqref{eqn:mean-est1}:
\begin{multline*}
\sup_{\epsilon>0} \omega_\lambda (T_\epsilon f, Q_0)\\
\leq \sup_{\epsilon>0}\big((T_\epsilon f- T_\epsilon
f(Q_0))\chi_{Q_0}\big)^*(\lambda |Q_0|) \leq
\frac{C_{\tau,n}}{\lambda} \avgint_{Q_0^\tau} |f(x)|\,dx.
\end{multline*}

It remains to show that $T_\epsilon f(Q_0)$ is indeed a constant.
Fix $x\in Q_0$; then
$$
T_\epsilon(f\chi_{\re^n\setminus Q_0^\tau})(x) =
\sum_{\substack{Q\in\Delta\\|Q|\ge \epsilon^n}} \langle f\chi_{\re^n\setminus Q_0^\tau},
g_Q\rangle \gamma_Q(x).
$$
We may restrict the sum to those cubes $Q$ satisfying $Q\cap Q_0\neq
\emptyset$ and $Q\cap (\re^n\setminus Q_0^\tau)\neq\emptyset$  since
otherwise we get  terms equal to $0$. In this case,
$Q_0^\tau\subsetneq Q$, and consequently $Q_0\subset Q$ with
$|Q_0|<2^{-\tau\,n}\,|Q|$. This implies that $\gamma_Q$ is constant
on $Q_0$ which proves our claim.

\section{The dyadic square function}
\label{section:Sd}

To prove our results for the dyadic square function we must first
give a version of Lemma~\ref{lemma:osc-est}.  The key change,
however, is that we prove it not for $S_d f$ but for $(S_d f)^2$.

\begin{lemma} \label{lemma:osc-est-Sd}
Fix $\lambda$, $0<\lambda < 1$.  Then for any function $f$,
every dyadic cube $Q_0$, and every $x\in Q_0$,
\begin{gather*}
\omega_\lambda ((S_d f)^2,Q_0) \leq
\frac{C_n}{\lambda^2}\left(\avgint_{Q_0}|f(x)|\,dx\right)^2, \\
M^{\#,d}_{\lambda,Q_0}((S_d f)^2)(x) \leq \frac{C_n}{\lambda^2} M^df(x)^2.
\end{gather*}
\end{lemma}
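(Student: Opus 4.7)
The plan is to mirror the proof of Lemma~\ref{lemma:osc-est}, but to prove the oscillation bound for $(S_d f)^2$ rather than $S_d f$. In this way the quadratic structure of the square function combines with the weak-type $(1,1)$ bound for $S_d$ through \eqref{eqn:mean-est1} applied at the exponent $p=1/2$.

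The first step is to split, for $x\in Q_0$,
\[
S_d f(x)^2 \;=\; \sum_{\substack{Q\in\Delta\\ Q\subsetneq Q_0,\,Q\ni x}} (f_Q-f_{\widehat{Q}})^2 \;+\; \sum_{\substack{Q\in\Delta\\ Q\supseteq Q_0}} (f_Q-f_{\widehat{Q}})^2 \;=:\; A(x)+B.
\]
Every cube $Q$ in the second sum contains $Q_0$ entirely (and $\chi_Q(x)=1$ for $x\in Q_0$), so $B$ is constant on $Q_0$. Every cube $Q$ in the first sum is strictly smaller than $Q_0$, so $\widehat{Q}\subseteq Q_0$ and hence the averages $f_Q$ and $f_{\widehat{Q}}$ coincide with $(f\chi_{Q_0})_Q$ and $(f\chi_{Q_0})_{\widehat{Q}}$. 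Therefore
\[
A(x) \;=\; \sum_{\substack{Q\in\Delta\\ Q\subsetneq Q_0,\,Q\ni x}} \big((f\chi_{Q_0})_Q-(f\chi_{Q_0})_{\widehat{Q}}\big)^2 \;\leq\; S_d(f\chi_{Q_0})(x)^2,
\]
the discarded (nonnegative) terms corresponding to cubes with $Q\supseteq Q_0$, for which $(f\chi_{Q_0})_Q$ no longer equals $f_Q$.

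Next, taking $c=B$ in the infimum defining $\omega_\lambda((S_df)^2,Q_0)$ and applying \eqref{eqn:mean-est1} with $p=1/2$,
\[
\omega_\lambda\big((S_d f)^2,Q_0\big) \;\leq\; \big(((S_d f)^2 - B)\chi_{Q_0}\big)^*(\lambda|Q_0|) \;\leq\; \lambda^{-2}\,\|(S_d f)^2 - B\|_{L^{1/2,\infty}(Q_0,|Q_0|^{-1}dx)}.
\]
Since $0\leq (S_df)^2 - B = A \leq S_d(f\chi_{Q_0})^2$ on $Q_0$, and since $\|g^2\|_{L^{1/2,\infty}(\mu)}=\|g\|_{L^{1,\infty}(\mu)}^2$ by a direct unwinding of the definitions, the weak $(1,1)$ inequality for the dyadic square function (with dimensional constant) yields the stated bound $C_n\lambda^{-2}\big(\avgint_{Q_0}|f|\big)^2$. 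The second inequality follows at once by taking the supremum over dyadic subcubes $Q'\in\Delta(Q_0)$ containing $x$, since then $\avgint_{Q'}|f|\leq M^d f(x)$.

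The main (modest) obstacle is verifying the pointwise inequality $A(x)\leq S_d(f\chi_{Q_0})(x)^2$, which hinges on carefully matching the averages $f_Q$ and $(f\chi_{Q_0})_Q$ for the various cubes $Q$ intersecting or containing $Q_0$; all the remaining ingredients---the weak $(1,1)$ bound for $S_d$ and the rearrangement inequality \eqref{eqn:mean-est1}---are already at hand.
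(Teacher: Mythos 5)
Your proof is correct and follows essentially the same route as the paper's: the same decomposition of $(S_d f)^2$ on $Q_0$ into a local part dominated by $S_d(f\chi_{Q_0})^2$ plus a constant $B=S_d f(Q_0)^2$, followed by the weak $(1,1)$ bound for $S_d$ fed into inequality~\eqref{eqn:mean-est1} with exponent $p=1/2$. The only cosmetic difference is that you phrase the final step via the identity $\|g^2\|_{L^{1/2,\infty}}=\|g\|_{L^{1,\infty}}^2$, whereas the paper estimates the level sets $|\{x\in Q_0: |(S_d f)^2 - B| > t\}|$ directly; these are the same computation.
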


\begin{proof}
It suffices to prove the first inequality; the second follows
immediately from the definition of $M^{\#,d}_{\lambda,Q_0}$. Fix
$Q_0$; then  for every $x\in Q_0$ we can decompose $S_d f(x)^2$ as
\[ S_d f(x)^2 =
\sum_{\substack{Q\in\Delta \\ Q\subsetneq Q_0}}
|f_Q-f_{\widehat{Q}}|^2\chi_{Q}(x) +
\sum_{\substack{Q\in\Delta \\ Q\supset Q_0}}
|f_Q-f_{\widehat{Q}}|^2.
\]
The second term is a constant;  denote it by $S_d f(Q_0)^2$.
Furthermore, we have that for $x\in Q_0$,
$$
0\le S_d f(x)^2-S_d f(Q_0)^2=
\sum_{\substack{Q\in\Delta \\Q\subsetneq Q_0} }
|f_Q-f_{\widehat{Q}}|^2\chi_{Q}(x) \le S_d (f\chi_{Q_0})(x)^2.
$$
Hence, since  $S_d$ is  weak $(1,1)$ (see, for instance, \cite{wilson07}), for
every $t>0$ we have that
\begin{multline*}
|\{ x \in Q_0 : |S_d f(x)^2-S_d f(Q_0)^2|> t \}| \\
\le |\{ x \in Q_0 : |S_d (f\chi_{Q_0})(x)|>t^{1/2} \}| \leq
\frac{C_n}{t^{1/2}} \int_{Q_0} |f(x)|\,dx.
\end{multline*}
Therefore, by \eqref{eqn:mean-est1} with $p=1/2$,
\begin{multline*}
\omega_\lambda ((S_d f)^2, Q_0)  \\ \leq \big(((S_d f)^2- S_d
f(Q_0)^2)\chi_{Q_0}\big)^*(\lambda |Q_0|) \leq \frac{C_n}{\lambda^2}
\left(\avgint_{Q_0} |f(x)|\,dx\right)^2.
\end{multline*}
\end{proof}

\subsection*{One weight inequalities:  Proof of Theorem~\ref{thm:square}}
The proof is a variation of the proof of
Theorem~\ref{thm:haarshift-onewt} and we describe the main changes.
The first is that rather than proving this result for $p=2$, we
choose $p=3$, so that $1/2= (p-1)^{-1}$.   Then by
Theorem~\ref{thm:sharp-extrapol} it will suffice to prove that for
any $w\in A_3$,
\[ \|S_d f\|_{L^3(w)} \leq C_n[w]_{A_3}^{\frac12}\|f\|_{L^3(w)}. \]

Fix $w\in A_3$ and $j$, $1\leq j \leq 2^n$. As before, let
$Q_N=Q_{N,j}$. Then,
\begin{multline*}
 \left(\int_{\re^n_j} |S_df(x)|^3w(x)\,dx\right)^{2/3} \\
\leq \liminf_{N\rightarrow\infty} \left(\int_{Q_N} |S_d f(x)^2 -
m_{(S_d f)^2}(Q_N)|^{3/2} w(x)\,dx\right)^{2/3}.
\end{multline*}
By Theorem~\ref{thm:lerner} and Lemma~\ref{lemma:osc-est-Sd}, for
every $x\in Q_N$ we have
\begin{align}\label{eqn:Sd-decomp}
&|S_d f(x)^2 - m_{(S_d f)^2}(Q_N)|
\\ \nonumber
&\qquad\quad \le C_{n}\, Mf(x)^2+C_n\,\sum_{j,k}
\left(\avgint_{\widehat{Q}_j^k} |f(x)|\,dx\right)^2\,\chi_{Q_j^k}(x)
\\ \nonumber
&\qquad\quad = C_{n}\, Mf(x)^2+C_n\,F(x).
\end{align}
To estimate the first term we use Theorem~\ref{thm:sharpmax} with
$p=3$:
$$
\|(M f)^2\|_{L^{3/2}(Q_N,w)} \le \|M f\|_{L^3(w)}^2 \le C_n
\,[w]_{A_3} \|f\|_{L^3(w)}^2.
$$
To estimate $F$ we use duality.  Fix a non-negative function $h\in L^{3}(w)$
with $\|h\|_{L^3(w)}=1$; then Remark~\ref{rem:disjoint} and
Lemma~\ref{lemma:wtdmax} yield
\begin{align*}
\int_{Q_N} F(x)\,h(x)\,dx & = \sum_{j,k}
\left(\avgint_{\widehat{Q}_j^k} |f(x)|\,dx\right)^2
\int_{Q_j^k} w(x)h(x)\,dx  \\
& \leq 2^{n+1}\sum_{j,k}
\frac{w(\widehat{Q}_j^k)}{|\widehat{Q}_j^k|}\left(\frac{w^{-1/2}(\widehat{Q}_j^k)}{|\widehat{Q}_j^k|}\right)^2 \; |E_j^k|\\
& \qquad \times \left(\frac{1}{w^{-1/2}(\widehat{Q}_j^k)}
\int_{\widehat{Q}_j^k} |f(x)|w(x)^{1/2}
w(x)^{-1/2}\,dx\right)^2 \\
& \qquad \times \frac{1}{w(Q_j^k)}\int_{Q_j^k} h(x)w(x)\,dx \\
& \leq C_n [w]_{A_3}\sum_{j,k} \int_{E_j^k} M^d_{w^{-1/2}}(fw^{1/2})(x)^2 M^d_w h(x)\,dx \\
& \leq C_n [w]_{A_3}\int_\subRn  M^d_{w^{-1/2}} (fw^{1/2})(x)^2
M^d_w h(x)\,dx
\\
& \leq C_n[w]_{A_3}
\left(\int_\subRn M^d_{w^{-1/2}} (fw^{1/2})(x)^3 w(x)^{-1/2}\,dx\right)^{2/3} \\
& \qquad \qquad \times \left(\int_\subRn M^d_{w} h(x)^3 w(x)\,dx\right)^{1/3} \\
& \leq C_n[w]_{A_3}
\left(\int_\subRn |f(x)w(x)^{1/2}|^3 w(x)^{-1/2}\,dx\right)^{2/3} \\
& \qquad \qquad \times \left(\int_\subRn h(x)^3 w(x)\,dx\right)^{1/3} \\
& = C_n[w]_{A_3} \|f\|_{L^3(w)}^2.
\end{align*}
Taking the supremum over all such functions $h$ we conclude that
$$
\|F\|_{L^{3/2}(Q_N,w)} \le C_n[w]_{A_3} \|f\|_{L^3(w)}^2.
$$
If we combine the two estimates we get
$$
\left(\int_{Q_N} |S_d f(x)^2 - m_{(S_d f)^2}(Q_N)|^{3/2}
w(x)\,dx\right)^{1/3} \le C_n\,[w]_{A_3}^{1/2} \|f\|_{L^3(w)},
$$
and the desired inequality follows as before.

\medskip

The exponent $\max\big(\frac{1}{2},\frac{1}{p-1}\big)$ is the best
possible.  As we noted above, for $p\leq 2$ specific examples were
constructed by Dragi{\v{c}}evi{\'c} {\em et al.}~\cite{MR2140200}.
For $p>2$, a proof was sketched by Lerner~\cite{MR2200743}, adapting
an argument for singular integrals due to R.~Fefferman
and Pipher~\cite{fefferman-pipher97}.  For completeness we give the
details.

If $2<p\leq 3$, then the sharpness of this exponent follows at once by
extrapolation.  For suppose there existed $p_0$ in this range such
that the best possible exponent satisfied
$\alpha(p_0)<\frac{1}{p_0-1}$.  Then by
Theorem~\ref{thm:sharp-extrapol}, we get that the exponent in the
weighted $L^2$ inequality is
\[ \alpha(p_0)\max\left(1,\frac{p_0-1}{2-1}\right)< 1, \]
contradicting the fact that the best possible exponent is
$1$.

We now consider the case $p>3$.
Suppose to the contrary that there exists a non-decreasing function
$\phi$ such that $\phi(t)/t^{1/2} \rightarrow 0$ as
$t\rightarrow\infty$, and suppose that for some $p_0>2$,
\begin{equation} \label{eqn:sharp1}
 \|S_d f\|_{L^{p_0}(w)} \leq C_{n,p_0} \phi([w]_{A_{p_0}})\|f\|_{L^{p_0}(w)}.
\end{equation}
We will show that this implies for all $p>p_0$ that
\begin{equation} \label{eqn:sharp2}
 \|S_d f\|_{L^{p}} \leq C_1 \phi(C_2p)\|f\|_{L^{p}}.
\end{equation}
Below we will give an example to show that this is a contradiction.

To prove \eqref{eqn:sharp2}, fix $p>p_0$ and fix a non-negative function $h\in L^{(p/p_0)'}$,
${\|h\|_{L^{(p/p_0)'}(\re^n) }=1}$.   Define the Rubio de
Francia iteration algorithm (see~\cite{cruz-martell-perezBook})
\[ Rh = \sum_{k=0}^\infty \frac{M^k h}{2^k \,{\|M\|^k_{L^{(p/p_0)'}(\re^n) }}   }.  \]
Then it follows from this definition that $
{\|Rh\|_{L^{(p/p_0)'}(\re^n) }} \leq 2$ and
\[ [Rh]_{A_{p_0}} \leq [Rh]_{A_1} \leq 2\,{\|M\|_{L^{(p/p_0)'}(\re^n) }} \leq C_{n,p_0}p.  \]
Therefore, by \eqref{eqn:sharp1} and H\"older's inequality,
\begin{multline*}
\int_{\re^n} S_df(x)^{p_0} h(x)\,dx \leq \int_{\re^n} S_df(x)^{p_0}
Rh(x)\,dx \\ \leq C_{n,p_0}\phi([Rh]_{A_{p_0}})^{p_0} \int_{\re^n}
f(x)^{p_0}Rh(x)\,dx \leq C_{n,p_0}\phi(C_{n,p_0} p)^{p_0}
{\|f\|_{L^p(\re^n)}^{p_0}}.\!
\end{multline*}
Inequality \eqref{eqn:sharp2} now follows by duality, giving us the
desired contradiction.

It remains to show that \eqref{eqn:sharp2} cannot hold.  This result
is known:  see, for instance, Wang~\cite{MR1127721}.  For
completeness, here we
construct a simple example of a function $f$ on the real line such
that $\|S_d f\|_p \geq cp^{1/2}\|f\|_p$.  Define the function $f$ on $\mathbb{R}$ by
\[ f(x) = \sum_{j=0}^\infty \chi_{(2^{-2j-1},2^{-2j})}(x). \]
Then
\[ \|f\|_p = \left(\sum_{j=0}^\infty 2^{-2j}-2^{-2j-1} \right)^{1/p} =
\left(\frac{1}{2}\sum_{j=0}^\infty 2^{-2j} \right)^{1/p} = \left(\frac{2}{3}\right)^{1/p}\leq 1. \]

To estimate the norm of $S_d f$, let $F_i=f_{Q_i}$, $i\ge 1$, denote the average of $f$ on the interval $Q_i=[0,2^{-i})$.  Then repeating the above calculation shows that
\[ F_{2i} = 2^{2i} \sum_{j=i}^\infty  2^{-2j}-2^{-2j-1} = \frac{2}{3}.  \]
Since the integrals of $f$ on $Q_{2i}$ and $Q_{2i-1}$ are the same, $F_{2i-1}=\frac{1}{3}$.   Therefore, given $i\ge 2$, if $2^{-2i-1} < x < 2^{-2i}$,
\[ S_df(x)^2 \geq \sum_{1\leq j \leq i} \big| F_{2j}-F_{2j-1}\big|^2  = \frac{i}{9} \geq c\log(1/x). \]
The same estimate (with a smaller constant $c$) holds when $2^{-2i}<x< 2^{-2i+1}$.  Therefore,
\[ \|S_df\|_p \geq c \left(\int_0^{2^{-3}} \log(1/x)^{p/2}\,dx \right)^{1/p}
\geq c \left( \sum_{k=3}^\infty k^{p/2}e^{-k}\right)^{1/p}
\geq c p^{1/2}, \]
where to get the last estimate we drop all the terms in the sum except for $k=[p]+2$.
Combining these two estimates, we see that
\[ \|S_df\|_p \geq c p^{1/2}\|f\|_p, \]
which is what we wanted to prove.

\subsection*{Two weight inequalities:  Proof of Theorem~\ref{thm:Bp-bump:Sd}}
Fix $p$, $1<p<\infty$.  Then, arguing as before it suffices to show
that
\[ \left(\int_{Q_N} |S_d f(x)^2-m_{(S_d f)^2}(Q_N)|^{p/2}
 u(x)\,dx\right)^{2/p} \leq C\,\|f\|_{L^p(v)}^2. \]
We again use \eqref{eqn:Sd-decomp}. To estimate  the term containing
$M$, note that we have
\[
\|(M f)^2\|_{L^{p/2}(Q_N,u)} \le \|M f\|_{L^{p}(u)}^2 \le C\,
\|f\|_{L^{p}(v)}^2,
\]
where we have used Theorem~\ref{thm:twowt-max} and the fact that
$(u,v)$ satisfies \eqref{eqn:Sd1} when $1<p\le 2$ or
(\ref{eqn:bp-bumpSqrt}) when $p>2$.

To estimate $F$ we consider two cases.  Suppose first that $1<p\leq
2$. Then  we use that $p/2\leq 1$, inequality \eqref{eqn:Sd1}, and
Theorem~\ref{thm:orlicz-max} and the fact that $\bar{B}\in B_p$ to
get
\begin{align*}
\int_{Q_N} F(x)^{p/2}\, u(x)\,dx
& \leq \sum_{j,k} \left(\avgint_{\widehat{Q}_j^k} |f(x)|\,dx\right)^p u(Q_j^k) \\
& \leq C\,\sum_{j,k} \avgint_{Q_j^k} u(x)\,dx \,
\|v^{-1/p}\|_{B,\widehat{Q}_j^k}^p \;
\|fv^{1/p}\|_{\bar{B},\widehat{Q}_j^k}^p |E_j^k| \\
& \leq C\sum_{j,k} \int_{E_j^k} M_{\bar{B}}(fv^{1/p})(x)^p \,dx \\
& \leq C\int_{\re^n} M_{\bar{B}}(fv^{1/p})(x)^p \,dx \\
& \leq C\|f\|_{L^p(v)}^p.
\end{align*}
Combining these two estimates we get the desired inequality.

\bigskip

Now suppose that $p>2$.  In this case the proof is very similar to
the proof of Theorem~\ref{thm:haarshift-twowt} and we highlight the
changes. To use duality with respect to Lebesgue measure,  fix
a non-negative function $h\in L^{(p/2)'}(\re^n)$ with  $\|h\|_{L^{(p/2)'}}=1$.
Then (\ref{eqn:bp-bumpSqrt}) gives
\begin{align*}
&\int_{Q_N} F(x)\,u(x)^{2/p}\,h(x)\,dx
\\
&\qquad \leq C\sum_{j,k} \left(\avgint_{\widehat{Q}_j^k}
|f(x)|\,dx\right)^{2} \avgint_{Q_j^k}
u(x)^{2/p}h(x)\,dx \; |E_j^k| \\
&\qquad \leq C\sum_{j,k}
\|fv^{1/p}\|_{\bar{B},\widehat{Q}_j^k}^2\|v^{-1/p}\|_{B,\widehat{Q}_j^k}^2
\|u^{2/p}\|_{A,Q_j^k}\|h\|_{\bar{A},Q_j^k}\;|E_j^k|\\
&\qquad \leq C\sum_{j,k} \int_{E_j^k}
M_{\bar{B}}(fv^{1/p})(x)^2 M_{\bar{A}} h(x)\,dx \\
&\qquad \leq C\int_{\re^n}
M_{\bar{B}}(fv^{1/p})(x)^2M_{\bar{A}} h(x)\,dx
\\
&\qquad \leq
C\,\|M_{\bar{B}}(fv^{1/p})(x)\|^2_{L^p}\,\|M_{\bar{A}}h \|_{L^{(p/2)'}}
\\
&\qquad \leq C\,\|f\|_{L^p(v)}^2,
\end{align*}
where we have used H\"older's inequality,
Theorem~\ref{thm:orlicz-max} and the fact that $\bar{A}\in
B_{(p/2)'}$ and $\bar{B}\in B_p$. The desired estimate follows at
once if we take the supremum over all such functions $h$.

\section{The vector-valued maximal operator}
\label{section:Mq}

Our two results for the vector-valued maximal operator are exact
parallels of our results for the dyadic square function.  Formally,
the change only requires replacing ``$2$'' by ``$q$'', $1<q<\infty$,
and in fact, the proofs do adapt readily as we will sketch below.

As with singular integral operators, in order to prove sharp results
for vector-valued maximal operator, we need to consider a dyadic
operator.  Recall that the dyadic maximal operator is defined by
\[ M^df(x) = \sup_{\substack{Q\in \Delta \\ Q\ni x}} \avgint_Q |f(y)|\,dy. \]
Given $q>1$ and $f=\{f_i\}$, define the dyadic vector valued maximal
operator by
\[ \overline{M}^d_q f(x) =  \left(\sum_{i=1}^\infty M^df_i(x)^q\right)^{1/q}.  \]
By an argument that goes back to C.~Fefferman and
Stein~\cite{fefferman-stein71} (see also \cite{sawyer82b} and
\cite{MR807149}), the maximal operator can be approximated by the
dyadic maximal operator and the analogous operator defined on all
translates of the dyadic grid.  Therefore, by a straightforward
argument using Fatou's lemma and Minkowski's inequality,  to prove
weighted norm inequalities for the vector-valued maximal operator it
suffices to prove them for $\overline{M}_q^d$.  (For the details of
this argument, see~\cite{cruz-martell-perezBook}.)

Again like the dyadic square function, the key estimate for the
dyadic vector-valued maximal operator is to control the local mean
oscillation of $(\overline{M}_q^d f)^q$.

\newcommand{\Md}{\overline{M}_q^d}

\begin{lemma} \label{lemma:osc-est-Mq}
Fix $\lambda$, $0<\lambda <1$, and $q$, $1<q<\infty$.  Then
for any function $f=\{f_i\}$, every dyadic cube $Q_0$, and every
$x\in Q_0$,
\begin{gather*}
\omega_\lambda ((\Md f)^q,Q_0) \leq
\frac{C_{n,q}}{\lambda^q}\left(\avgint_{Q_0}\|f(x)\|_{\ell^q}\,dx\right)^q, \\
M^{\#,d}_{\lambda,Q_0}((\Md f)^q)(x) \leq \frac{C_{n,q}}{\lambda^q}
M^d(\|f(\cdot)\|_{\ell^q} )(x)^q.
\end{gather*}
\end{lemma}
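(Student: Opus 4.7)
The plan is to mimic closely the proof of Lemma~\ref{lemma:osc-est-Sd} for the dyadic square function, with the exponent $2$ replaced by $q$ and the scalar weak-type $(1,1)$ bound for $S_d$ replaced by the Fefferman-Stein vector-valued weak-type $(1,1)$ estimate for $\overline{M}_q$ (which transfers immediately to $\Md$ since $\Md \le \overline{M}_q$ pointwise). The second inequality is a formal consequence of the first: applying it to each $Q'\in\Delta(Q_0)$ with $Q'\ni x$ bounds $\omega_\lambda((\Md f)^q,Q')$ by $C_{n,q}\lambda^{-q}\bigl(\avgint_{Q'}\|f\|_{\ell^q}\bigr)^q\le C_{n,q}\lambda^{-q}M^d(\|f(\cdot)\|_{\ell^q})(x)^q$, and taking the supremum over such $Q'$ yields the second display. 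So the real task is the first inequality.

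For the first inequality I would fix $Q_0$ and, for $x\in Q_0$, split the dyadic cubes containing $x$ into those contained in $Q_0$ and those strictly containing it. This produces the pointwise identity
$$M^d f_i(x) = \max\bigl(M^d(f_i\chi_{Q_0})(x),\,A_i\bigr), \qquad A_i := \sup_{Q\supsetneq Q_0}\avgint_Q|f_i|,$$
where $A_i$ does not depend on $x\in Q_0$. Raising to the $q$-th power, using $\max(a,b)^q=\max(a^q,b^q)$ and $\max(a^q,b^q)-a^q\in[0,b^q]$, and then summing over $i$ yields
$$0 \le (\Md f)(x)^q - C^q \le (\Md(f\chi_{Q_0}))(x)^q, \qquad C^q := \sum_i A_i^q,$$
so that, on $Q_0$, the function $(\Md f)^q$ differs from the constant $C^q$ by an expression dominated by $(\Md(f\chi_{Q_0}))^q$.

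At this point I would invoke the Fefferman-Stein weak-type $(1,1)$ bound for $\Md$, which gives
$$|\{x\in Q_0:\Md(f\chi_{Q_0})(x)^q>t\}| \le \frac{C_{n,q}}{t^{1/q}}\int_{Q_0}\|f(y)\|_{\ell^q}\,dy.$$
In other words, on $Q_0$ the function $(\Md f)^q-C^q$ lies in $L^{1/q,\infty}(Q_0,|Q_0|^{-1}dx)$ with quasi-norm at most $C_{n,q}\bigl(\avgint_{Q_0}\|f\|_{\ell^q}\bigr)^q$. Applying inequality~\eqref{eqn:mean-est1} with $p=1/q$ then gives
$$\omega_\lambda\bigl((\Md f)^q,Q_0\bigr) \le \bigl(((\Md f)^q - C^q)\chi_{Q_0}\bigr)^*(\lambda|Q_0|) \le \frac{C_{n,q}}{\lambda^q}\left(\avgint_{Q_0}\|f(x)\|_{\ell^q}\,dx\right)^q,$$
which is the desired estimate.

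The only delicate point is ensuring the subtraction of $C^q$ is meaningful, i.e.\ that $C^q<\infty$; but this is automatic, since $M^d f_i(x)\ge A_i$ for all $x\in Q_0$ forces $(\Md f)(x)\ge C$ pointwise on $Q_0$, and we may assume $\Md f$ is finite somewhere in $Q_0$ (otherwise both sides of the claimed inequality are trivially comparable after a density argument). Beyond this verification, the argument is a clean transcription of the square-function proof, with the vector-valued Fefferman-Stein inequality being the one genuinely new input.
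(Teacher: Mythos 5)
Your proposal is correct and follows the paper's proof essentially verbatim: the same decomposition $M^d f_i = \max(M^d(f_i\chi_{Q_0}), A_i)$ with constant $A_i$, the same elementary inequality $0\le\max(a,b)-b\le a$ applied termwise after raising to the $q$-th power and summing, the same invocation of the Fefferman--Stein vector-valued weak $(1,1)$ bound, and the same application of inequality~\eqref{eqn:mean-est1} with $p=1/q$. The second display is derived from the first in the same formal way as in the paper.
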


\begin{proof}
The second estimate again follows from the first. To prove the
first, fix $Q_0$.  Then for every $x\in Q_0$ and every $i\geq 1$, we
observe that
\[ M^df_i(x) = \max\left( M^d(f_i\chi_{Q_0})(x), \sup_{\substack{Q\in\Delta\\ Q_0\subset Q}}\avgint_Q |f_i(y)|\,dy\right). \]
The second term on the right is constant; using this we define
\[ K_0 = \left(\sum_{i=1}^\infty \left(\sup_{\substack{Q\in\Delta\\ Q_0\subset Q}}\avgint_Q |f_i(y)|\,dy\right)^q\right)^{1/q}. \]
For $x\in Q_0$, $\Md f(x)^q \geq K_0^q$.  We also have the following
elementary inequality: for every $a,b\ge 0$, $0\le \max(a,b)-b \leq
a$.  Combining these facts we get that
\[ 0
 \leq \Md f(x)^q -  K_0^q
\leq \sum_{i=1}^\infty M^d( f_i\chi_{Q_0})(x)^q =
\Md(f\chi_{Q_0})(x)^q. \]
Since the vector-valued maximal operator is weak $(1,1)$ (see
\cite{fefferman-stein71}), for any $t>0$,
\begin{multline*}
 |\{ x\in Q_0 : |\Md f(x)^q - K_0^q| > t \}| \\
\leq |\{ x\in Q_0 : \Md (f\chi_{Q_0})(x) > t^{1/q} \}| \leq
\frac{C_{n,q}}{t^{1/q}} \int_{Q_0} \|f(x)\|_{\ell^q}\,dx.
\end{multline*}
Therefore,  by \eqref{eqn:mean-est1} with $p=1/q$,
\begin{multline*}
\omega_\lambda ((\Md f)^q, Q_0)  \\
\leq \big(((\Md f)^q- K_0^q)\chi_{Q_0}\big)^*(\lambda |Q_0|) \leq
\frac{C_{n,q}}{\lambda^q} \left(\avgint_{Q_0}
\|f(x)\|_{\ell^q}\,dx\right)^q.
\end{multline*}
\end{proof}

\subsection*{One weight inequalities:  Proof of Theorem~\ref{thm:vvmax-onewt}}

As we noted above, the proof is very similar to the proof of
Theorem~\ref{thm:square}, and so we briefly sketch the key details.
By Theorem~\ref{thm:sharp-extrapol} it will suffice to prove it for
the special case when $p=q+1$. For this value of $p$ we have that
$(p/q)'=p$ and $1-p'=-1/q$.  As before, fix $w\in A_p$ and $Q_N$; we
will show that
\begin{multline*}
\left(\int_{Q_N} |\Md f(x)^q - m_{(\Md f)^q}(Q_N)|^{p/q} w(x)\,dx
\right)^{q/p}
\\
\leq C_{n,q}[w]_{A_{p}} \left(\int_{\re^n} \|f(x)\|_{\ell^q}^{p}
w(x)\,dx\right)^{q/p}.
\end{multline*}
By Theorem~\ref{thm:lerner}  and Lemma~\ref{lemma:osc-est-Mq}, for
every $x\in Q_N$,
\begin{align}\label{eqn:Mq-decomp}
&|\Md f(x)^q - m_{(\Md f)^q}(Q_N)|
\\ \nonumber
& \ \ \le C_{n,q}\,M(\|f(\cdot)\|_{\ell^q})(x)^q + C_{n,q}
\sum_{j,k} \left(\avgint_{\widehat{Q}_j^k}
\|f(x)\|_{\ell^q}\,dx\right)^q\chi_{Q_j^k}(x)
\\ \nonumber
&\ \ = C_{n,q}\,M(\|f(\cdot)\|_{\ell^q})(x)^q+C_{n,q}\,F(x).
\end{align}
To estimate the first term we use Theorem \ref{thm:sharpmax}. The
estimate for $F$ uses duality: fix a non-negative function $h\in L^p(w)$ with
$\|h\|_{L^p(w)}$ =1 (recall that $(p/q)'=p$). Then, proceeding as
before, we use the definition of $A_p=A_{q+1}$ to show that
\begin{align*}
&\int_{Q_N} F(x)\,h(x)\,w(x)\,dx
\\
&\quad \le C_{n}[w]_{A_p} \int_{\re^n}
M^d_{w^{-1/q}}(\|f(\cdot)\|_{\ell^q}w^{1/q})(x)^q w(x)^{-1/p}
M_w^dh(x)w(x)^{1/p}dx.
\end{align*}
Finally, we use H\"older's inequality, Theorem~\ref{lemma:wtdmax} and
then take the supremum over all such functions $h$ to get the
desired estimate.

To prove that the exponent
$\max\left(\frac{1}{q},\frac{1}{p-1}\right)$ is the best possible, we
consider two cases.  If $p\leq q+1$, then the exponent is
$\frac{1}{p-1}$, which is the same as the sharp exponent for the
scalar maximal function.  Therefore, the examples given by
Buckley~\cite{MR1124164} immediately adapt to the vector-valued
maximal operator.

If $p>q+1$, then we can argue exactly as we did for the dyadic square
function, replacing the exponent $1/2$ by $1/q$.  Therefore, to show
that the exponent $1/q$ is sharp we need to show that there exists a
vector-valued function $f=\{f_i\}$ such that $\|\overline{M}_q f\|_p
\geq cp^{1/q}\|f\|_p$.  But such a function is given by
Stein~\cite[p.~75]{MR1232192}.

\subsection*{Two weight inequalities:  Proof of Theorem~\ref{thm:vvmax-twowt}}
The proof is again nearly the same as the proof of
Theorem~\ref{thm:Bp-bump:Sd} for the dyadic square function, so we
only sketch the highlights.  Fix $p$, $1<p<\infty$; then it suffices
to show that
\[ \int_{Q_N} |\Md f(x)^q - m_{(\Md f)^q}(Q_n)|^{p/q} u(x)\,dx
\le C\,\int_{\re^n} \|f(x)\|_{\ell^q}^p\,v(x)\,dx.
\]
We use \eqref{eqn:Mq-decomp}. We estimate the term
involving $M$ using Theorem~\ref{thm:twowt-max} and the fact that
$(u,v)$ satisfies \eqref{eqn:vvmax1} when $1<p\le q$ or
\eqref{eqn:vvmax3} when $p>q$.

To estimate $F$ we consider two cases.  Suppose first that $1<p\leq
q$, then
$$
\int_{Q_N} F(x)^{p/q}\, u(x)\,dx
 \leq \sum_{j,k} \left(\avgint_{\widehat{Q}_j^k} \|f(x) \|_{\ell^q}\,dx\right)^p u(Q_j^k),
$$
and this term is estimated exactly as before. Combining these two
estimates we get the desired inequality.

\bigskip

When $p>q$,  we use duality with respect to Lebesgue measure and consider a
non-negative function $h$ such that $\|h\|_{L^{(p/q)'}}=1$. Then,
\begin{align*}
&\int_{Q_N} F(x)\,u(x)^{q/p}\,h(x)\,dx
\\
&\qquad\leq C_{n,q}\,\sum_{j,k} \left(\avgint_{\widehat{Q}_j^k}
\|f(x)\|_{\ell^q}\,dx\right)^{q} \avgint_{Q_j^k}
u(x)^{q/p}h(x)\,dx \; |E_j^k|.
\end{align*}
From here we follow the argument in the proof of
Theorem~\ref{thm:Bp-bump:Sd}, replacing $2$ by $q$.

\bibliographystyle{plain}
\bibliography{dyadic-hilbert}

\end{document}